\newtheorem{theorem}{Theorem}[section]
\newtheorem{lemma}[theorem]{Lemma}
\newtheorem{prop}[theorem]{Proposition}
\newtheorem{assumption}[theorem]{Assumption}
\newtheorem{coro}[theorem]{Corollary}
\newtheorem{conjecture}[theorem]{Conjecture}
\theoremstyle{definition}
\newtheorem{definition}[theorem]{Definition}
\theoremstyle{remark}
\newtheorem{remark}[theorem]{Remark}
\numberwithin{equation}{section}
\DeclareMathAlphabet{\mathsl}{OT1}{cmss}{m}{sl}
\SetMathAlphabet{\mathsl}{bold}{OT1}{cmss}{bx}{sl}
\newcommand{\me}{\ensuremath{\mathrm{e}}}
\newcommand{\md}{\ensuremath{\mathrm{d}}}
\newcommand{\scpr}[3]{%
  \ensuremath{%
    \left\langle
      #1, #2
    \right\rangle_{\raisebox{-0ex}{$\scriptstyle \ell^{\raisebox{.1ex}{$\scriptscriptstyle 2$}} (#3)$}}
  }
}
\newcommand{\Norm}[2]{%
  \ensuremath{%
    \big\lVert
      #1
    \big\rVert_{\raisebox{-.0ex}{$\scriptstyle #2$}}
  }
}
\DeclareMathOperator{\mean}{\mathbb{E}}
\DeclareMathOperator{\Mean}{\mathrm{E}}
\DeclareMathOperator{\prob}{\mathbb{P}}
\DeclareMathOperator{\Prob}{\mathrm{P}}
\DeclareMathOperator{\supp}{\mathrm{supp}}
\DeclareMathOperator{\osc}{\mathrm{osc}}
\DeclareMathOperator{\meanPhi}{\mathbf{E}}
\DeclareMathOperator{\probPhi}{\mathbf{P}}
\DeclareMathOperator{\covPhi}{\mathbf{Cov}}
\DeclareMathOperator{\varGFF}{\mathbf{Var}}
\newcommand{\ldef}{\ensuremath{\mathrel{\mathop:}=}}
\newcommand{\rdef}{\ensuremath{=\mathrel{\mathop:}}}
\newcommand{\indicator}{\ensuremath{\mathbbm{1}}}
\begin{document}

\title[Scaling limit of the inhomogeneous DGFF]{Scaling limit of the discrete Gaussian free field with degenerate random conductances}


\author[S.\ Andres]{Sebastian Andres \orcidlink{0000-0002-5116-7789}}
\address{Technische Universit\"at Braunschweig}
\curraddr{%
  Institut f\"ur Mathematische Stochastik,
  Universit\"atsplatz 2,
  38106 Braunschweig
}
\email{sebastian.andres@tu-braunschweig.de}
\thanks{}

\author[M.\ Slowik]{Martin Slowik \orcidlink{0000-0001-5373-5754}}
\address{University of Mannheim}
\curraddr{Mathematical Institute, B6, 26, 68159 Mannheim}
\email{slowik@math.uni-mannheim.de}
\thanks{}

\author[A.-L.\ Sokol]{Anna-Lisa Sokol}
\address{Technische Universit\"at Berlin (Alumna)}
\curraddr{Strasse des 17. Juni 136, 10623 Berlin}
\email{anna-lisa.sokol@alumni.tu-berlin.de}
\thanks{}

\subjclass[2000]{39A12; 60J35; 60J45; 60K37; 82C41}

\keywords{Gaussian free field, random walk; Green kernel; random conductance model}

\date{\today}

\dedicatory{}

\begin{abstract}
  We consider discrete Gaussian free fields with ergodic random conductances on a class of random subgraphs of $\mathbb{Z}^{d}$, $d \geq 2$, including i.i.d.\ supercritical percolation clusters, where the conductances are possibly unbounded but satisfy a moment condition. As our main result, we show that, for almost every realization of the environment, the rescaled field converges in law towards a continuum Gaussian free field.  We also present a scaling limit for the covariances of the field. To obtain the latter, we establish a quenched local limit theorem for the Green's function of the associated random walk among random conductances with Dirichlet boundary conditions.
\end{abstract}

\maketitle

\tableofcontents

\section{Introduction}
The discrete Gaussian free field (DGFF), also known as the harmonic crystal, serves as a prime example within a class of models in modern statistical mechanics that describe, under a particular idealisation, an interface between two pure thermodynamic phases, see, for instance, \cite{Bi20} for a survey. It can be defined as the Gaussian random field whose covariances are given by the Green's function of the discrete Laplace operator. In the present article we investigate the scaling behaviour of the DGFF with \emph{impurities} modelled by random conductances. In a physical context, the DGFF with constant conductances may be interpreted as a deformation field representing the microscopic displacements of atoms in a crystal from their ideal position in a \emph{homogeneous} crystal at non-zero temperature. Similarly, the microscopic structure of fluctuations in inhomogeneous crystals with impurities motivates to consider a version of the DGFF with random conductances, namely a centred Gaussian random field with covariances given by the Green's function of the discrete Laplacian with random weights. To our knowledge this model first appeared in~\cite{CI03}, and was studied in~\cite{BS11, CK12, CK15} and more recently in~\cite{CN21, CR24}, see also~\cite[Section~6]{Bi11}. Here, we prove the convergence of the suitably rescaled field towards a continuum Gaussian free field and provide a scaling limit for the covariances of the field. We allow random environments in the form of ergodic and unbounded random conductances, only requiring a moment condition, on a class of random subgraphs of $\mathbb{Z}^{d}$, including both i.i.d.\ and certain correlated supercritical percolation clusters. The main technical input will come from some homogenization results of independent interest for the associated random walk in random environment, known in the literature as random conductance model (RCM), and its Green's function, namely a certain extension of the quenched invariance principle for the random walk and a quenched local limit theorem for the Green's function with Dirichlet boundary condition on a class of bounded domains.

\subsection{The model}
We will study random Gaussian fields on $\mathbb{Z}^{d}$ (or on more general random subgraphs of $\mathbb{Z}^{d}$) with covariances given by the Green's function of a random walk among random conductances on such graphs, which we need to introduce first. For $d \geq 2$ let $(\mathbb{Z}^{d}, E_{d})$ be the $d$-dimensional Euclidean lattice with edge set, $E_{d}$, given by the set of all non-oriented nearest neighbour bonds. We call two vertices $x, y \in \mathbb{Z}^{d}$ adjacent if $\{x, y\} \in E_{d}$, and we then write $x \sim y$. Let $(\Omega, \mathcal{F}) \ldef ([0, \infty)^{E_{d}}, \mathcal{B}([0, \infty))^{\otimes E_{d}})$ be a measurable space equipped with the Borel-$\sigma$-algebra. Furthermore, for any $\omega = \{\omega(e), e \in E_{d}\} \in \Omega$, we refer to $\omega(e)$ as the conductance of the edge $e$. We call an edge $e \in E_{d}$ \emph{open} if $\omega(e) > 0$ and denote by $\mathcal{O}(\omega)$ the set of open edges.  Moreover, let $\mathcal{C}_{\infty}(\omega)$ be the subset of vertices of $\mathbb{Z}^{d}$ that belongs to infinite components, each of which is connected with respect to path along open edges.

We endow the measurable space $(\Omega, \mathcal{F})$ with the $d$-parameter group of space shifts, $(\tau_{x})_{x \in \mathbb{Z}^{d}}$, that acts on $\Omega$ as
\begin{align}\label{eq:def:space_shift}
  (\tau_{z}\, \omega)(\{x, y\})
  \;\ldef\;
  \omega(\{x+z, y+z\}),
  \qquad \forall\, \{x, y\} \in E_{d}.
\end{align}
Further, we will denote by $\prob$ a probability measure on $(\Omega, \mathcal{F})$, and we write $\mean$ for the expectation with respect to $\prob$. Throughout the paper, we will impose assumptions both on the law $\prob$ and on geometric properties of the infinite cluster.
\begin{assumption} \label{ass:law}
  Assume that $\prob$ satisfies the following conditions.
  \begin{enumerate}[(i)]
  \item
    The law $\prob$ is stationary and ergodic with respect to space shifts of $\mathbb{Z}^{d}$, that is, $\prob \circ \tau_{x}^{-1} = \prob$ for all $x \in \mathbb{Z}^{d}$ and $\prob[A] \in \{0, 1\}$ for any $A \in \mathcal{F}$ such that $\tau_{x}^{-1}(A) = A$ for all $x \in \mathbb{Z}^{d}$.     
 
  \item
    For $\prob$-a.e.\ $\omega$, the set $\mathcal{C}_{\infty}(\omega)$ is non-empty and connected, that is, there exists a unique infinite connected component -- also called infinite open cluster -- and $\prob\bigl[0 \in \mathcal{C}_{\infty}\bigr] > 0$.
  \end{enumerate}
\end{assumption}
Let $\Omega^{*} \ldef \{\omega \in \Omega : \mathcal{C}_{\infty}(\omega) \text{ is non-empty and connected} \}$ and $\Omega^{*}_{0} \ldef \{\omega \in \Omega^{*} : 0 \in \mathcal{C}_{\infty}(\omega)\}$. We write $\prob_{0}[ \,\cdot\, ] \ldef \prob\bigl[ \,\cdot \,|\, 0 \in \mathcal{C}_{\infty}\bigr]$ to denote the conditional distribution given the event $\{0 \in \mathcal{C}_{\infty}\}$, and $\mean_{0}$ for the expectation with respect to $\prob_{0}$. In the case of i.i.d.\ conductances Assumption~\ref{ass:law} is fulfilled if $\prob\bigl[\omega(e) > 0\bigr] > p_{c}$, where $p_{c} \equiv p_{c}(d)$ denotes the critical probability for bond percolation on $\mathbb{Z}^{d}$. We now introduce the inhomogeneous harmonic crystal.
\begin{definition}[Inhomogeneous DGFF]\label{def:inhomogeneous:DGFF}
  Let $\Lambda \subset \mathbb{R}^{d}$ be bounded. For any $\omega \in \Omega^{*}$, the \emph{inhomogeneous discrete Gaussian free field} on $\Lambda \cap \mathcal{C}_{\infty}(\omega)$ with zero boundary conditions is the Gaussian process $\varphi^{\Lambda} = \bigl(\varphi_{x}^{\Lambda} : x \in \mathbb{Z}^{d} \bigr)$ with law $\probPhi^{\omega}$ given by
  \begin{align}
    \probPhi^{\omega}\bigl[\varphi^{\Lambda} \in A\bigr]
    \;=\;
    \frac{1}{Z_{\Lambda}^{\omega}}
    \int_{A}
      \me^{-\tfrac{1}{2}\sum_{\{x, y\} \in E_{d}} \omega(\{x, y\}) (\varphi_{x} - \varphi_{y})^2}
      \mspace{-6mu}
    \prod_{x \in \Lambda \cap \mathcal{C}_{\infty}(\omega)}\mspace{-12mu} \mathrm{d}\varphi_{x}
    \prod_{x \in (\Lambda \cap \mathcal{C}_{\infty}(\omega))^{\mathrm{c}}}\mspace{-12mu} \delta_{0}[\mathrm{d} \varphi_{x}],
  \end{align}
  for any measurable $A \subset \mathbb{R}^{\mathbb{Z}^{d}}$, where $\delta_{0}$ denotes the Dirac measure at $0$ and $Z_{\Lambda}^{\omega}$ is a normalization constant.
\end{definition} 
Since $\Lambda$ is finite, the normalization constant $Z_{\Lambda}^{\omega}$ given by 
\begin{align*}
  Z_{\Lambda}^{\omega}
  \;\ldef\;
  \int_{\mathbb{R}^{\mathbb{Z}^{d}}}
    \me^{-\tfrac{1}{2}\sum_{\{x, y\} \in E_{d}} \omega(\{x, y\}) (\varphi_{x} -\varphi_{y})^2} \mspace{-6mu}
  \prod_{x \in \Lambda \cap \mathcal{C}_{\infty}(\omega)}\mspace{-12mu} \mathrm{d}\varphi_{x}
  \prod_{x \in (\Lambda \cap \mathcal{C}_{\infty}(\omega))^{\mathrm{c}}}\mspace{-12mu} \delta_{0}[\mathrm{d}\varphi_{x}],
\end{align*}
is finite.  Further, the field $\varphi^{\Lambda}$ as in Definition~\ref{def:inhomogeneous:DGFF} is a multivariate Gaussian process with mean and covariance given by
\begin{align} \label{eq:covField}
  \meanPhi^{\omega}\bigl[\varphi_{x}^{\Lambda}\bigr] \;=\; 0
  \quad \text{and} \quad
  \meanPhi^{\omega}\bigl[\varphi_{x}^{\Lambda}\, \varphi_{y}^{\Lambda}\bigr]
  \;=\;
  g_{\Lambda}^{\omega}(x, y),
  \qquad x, y \in \Lambda \cap \mathcal{C}_{\infty}(\omega),
\end{align}
where $g^{\omega}_{\Lambda}$ denotes the Green's function on $\Lambda \cap \mathcal{C}_{\infty}(\omega)$ associated with the operator $\mathcal{L}^{\omega}$ acting on bounded functions $f\colon \mathbb{Z}^{d} \to \mathbb{R}$ as
\begin{align} \label{eq:defL}
  \big(\mathcal{L}^{\omega} f)(x)
  \;=\; 
  \sum_{y \sim x} \omega(\{x, y\})\, \bigl(f(y) - f(x)\bigr).
\end{align}
That is, $g^{\omega}_{\Lambda}$ is the solution of the Poisson equation 
\begin{align}\label{eq:PDE_green_killed}
  \left\{
    \mspace{-6mu}
    \begin{array}{rcll}
      \big(\mathcal{L}^{\omega} g_{\Lambda}^{\omega}(\cdot, y)\big)(x)
      &\mspace{-5mu}=\mspace{-5mu}& -\delta_{y}(x),
      &x \in \Lambda \cap \mathcal{C}_{\infty}(\omega),
      \\[.5ex] 
      g_{\Lambda}^{\omega}(x, y)
      &\mspace{-5mu}=\mspace{-5mu}& 0, 
      &x \in (\Lambda \cap \mathcal{C}_{\infty}(\omega))^{\mathrm{c}}.
    \end{array}
  \right.
\end{align}
\renewcommand{\thesubfigure}{}
\begin{figure} 
  \begin{center}
    \subfigure[(a)]{\scalebox{0.578}{\includegraphics{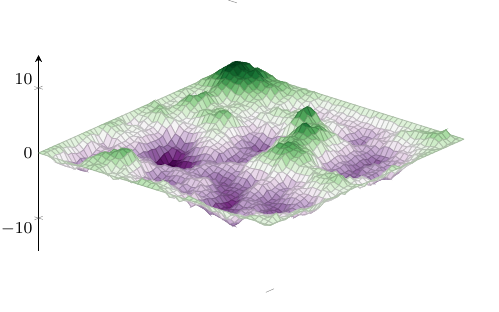}}}
    \hspace{-3ex}
    \subfigure[(b)]{\scalebox{0.578}{\includegraphics{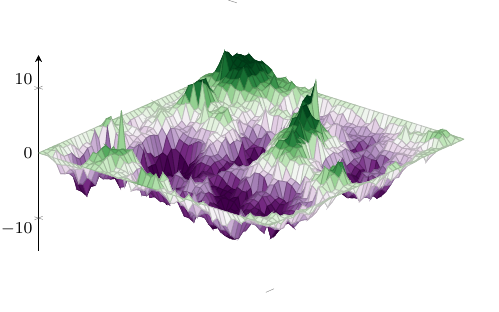}}}
    \hspace{-3ex}
    \subfigure[(c)]{\scalebox{0.578}{\includegraphics{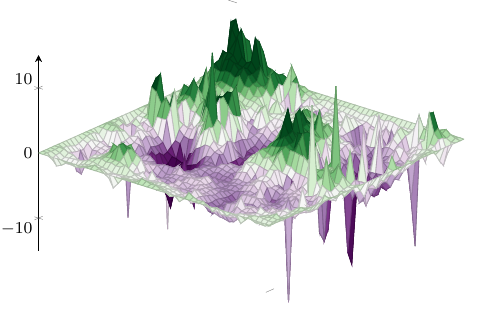}}}	
    \caption{%
      A sample of the inhomogeneous DGFF on a $50 \times 50$ square of $\mathbb{Z}^{2}$ with 
      (a) $\omega(e) = 1$;  (b) $\omega(e) \sim \mathrm{Exp}(1)$ i.i.d.;
      (c) $\omega(e) \sim \mathrm{Exp}(1)$ constant along lines, independent between different lines. In all three figures, for generating the samples the same seed value for the random number generator is used.}
  \end{center}
  \label{fig:numsim}
\end{figure}
Next, we introduce the associated random walk in random environment, which is well-known under the label random conductance model (RCM). For any realization $\omega \in \Omega^{*}$ consider the continuous-time Markov chain $X \equiv (X_{t})_{t \geq 0}$ on $\mathcal{C}_{\infty}(\omega)$ with generator $\mathcal{L}^{\omega}$ defined in \eqref{eq:defL}. Notice that $X$ is reversible with respect to the counting measure. When visiting a vertex $x \in \mathcal{C}_{\infty}(\omega)$, the random walk $X$ waits at $x$ an exponential time with mean $1/\mu^{\omega}(x)$, where we define $\mu^{\omega}(x) \ldef \sum_{y \sim x} \omega(\{x, y\})$ for any $x \in \mathbb{Z}^{d}$, and then it jumps to a vertex $y \sim x$ with probability $\omega(\{x, y\})/\mu^{\omega}(x)$.  Since the law of the waiting time depends on the location of the walk, this walk is often called the variable speed walk (VSRW) in the literature.  We denote by $\Prob^{\omega}_{x}$ the quenched law of the process $X$ starting at $x \in \mathcal{C}_{\infty}(\omega)$, and the corresponding expectation by $\Mean_{x}^{\omega}$. Furthermore, we define the \emph{heat kernel} as $p_{t}^{\omega}(x, y) \ldef \Prob_{x}^{\omega}[X_{t} = y]$ for $x, y \in \mathbb{Z}^{d}$ and $t \geq 0$. Note that the heat kernel is invariant under space shifts, that is,
\begin{align} \label{eq:HK_shift}
  p_{t}^{\tau_z \omega}(x, y) \;=\; p_{t}^{\omega}(x+z, y+z),
  \qquad z \in \mathbb{Z}^{d}.
\end{align}
Throughout the paper, for any path $w\colon [0, \infty) \rightarrow \mathbb{R}^{d}$, and any subset $\Lambda \subset \mathbb{R}^{d}$ we define its first exit time from $\Lambda$ by
\begin{align*}
  \tau_{\Lambda}(w) \ldef \inf \bigl\{t \geq 0 : w_{t} \notin \Lambda \bigr\}, 
\end{align*}
and we will suppress the dependence of $w$ in the notation when it is clear from the context. Then, for any bounded $\Lambda \subset \mathbb{R}^{d}$, the Green's function , $g_{\Lambda}^{\omega}(x, y)$, describes the expected amount of time that the VSRW $X$ spends in $y$ when starting in $x$ before exiting $\Lambda$, that is, 
\begin{align*}
  g_{\Lambda}^{\omega}(x, y)
  \;=\;
  \Mean_{x}^{\omega}\biggl[
    \int_{0}^{\tau_{\Lambda}(X)} \indicator_{\{ X_{t} = y\}}\, \mathrm{d}t
  \biggr]
  \;=\;
  \int_{0}^{\infty} \Prob_{x}^{\omega}\bigl[X_t = y,\, t < \tau_{\Lambda}(X)\bigr]\, \mathrm{d}t
\end{align*}
for any $x, y \in \Lambda \cap \mathcal{C}_{\infty}(\omega)$.

\subsection{Main results}
First, we state the assumptions on the underlying graph under which our first main result will be proved. Given $\omega \in \Omega^{*}$, let $E(\mathcal{C}_{\infty}(\omega)) \ldef \{\{x, y\} \in \mathcal{O}(\omega) : x, y \in \mathcal{C}_{\infty}(\omega)\}$ be the edge set of the infinite cluster. We denote by $d^{\omega}$ the graph‐distance on the pair 
$\bigl(\mathcal{C}_{\infty}(\omega),\,E(\mathcal{C}_{\infty}(\omega))\bigr)$, namely, for any $x, y \in \mathcal{C}_{\infty}(\omega)$, $d^\omega(x, y)$ is the minimal length of a path joining $x$ and $y$ that consists only of open edges. For $x \in \mathcal{C}_{\infty}(\omega)$ and $r \geq 0$, let $B^{\omega}(x, r) \ldef {\{ y \in \mathcal{C}_{\infty}(\omega) : d^{\omega}(x, y) \leq \lfloor r \rfloor\}}$ be the closed ball with centre $x$ and radius $r$ with respect to $d^{\omega}$. Likewise, for $x \in \mathbb{Z}^d$ we write $B(x, r) \ldef \{y \in \mathbb{Z}^d : |x-y|_{1}\leq \lfloor r \rfloor\}$
for balls in $\mathbb{Z}^d$ centred at $x$, where $|\cdot|_{1}$ is the usual graph‐distance on $\mathbb{Z}^d$. More generally, $|\cdot|_{p}$ with $p \in [1, \infty]$ denotes the standard $p$‐norm on both $\mathbb{R}^d$ and $\mathbb{Z}^d$. Finally, for any $A \subset B \subset \mathbb{Z}^{d}$ we define the \emph{relative} boundary of $A$ with respect to $B$ by
\begin{align*}
  \partial_{\!B}^{\omega} A
  \;\ldef\;
  \big\{%
    \{x,y\} \in \mathcal{O}(\omega) :
    x \in A \,\text{ and }\, y \in B \setminus A
  \big\},
\end{align*}
and we simply write $\partial^{\omega} A$ if $B \equiv \mathcal{C}_{\infty}(\omega)$.
\begin{definition}[Regular balls]\label{def:regBall}
  Let $C_{\mathrm{V}} \in (0, 1]$, $C_{\mathrm{riso}} \in (0, \infty)$ and $C_{\mathrm{W}} \in [1, \infty)$ be fixed constants.  For $x \in \mathcal{C}_{\infty}(\omega)$ and $n \geq 1$, we say a ball $B^{\omega}(x, n)$ is \emph{regular} if it satisfies the following conditions.
  \begin{enumerate}[(i)]
  \item
    Volume regularity of order $d$, that is,
    \begin{align*}
      C_{\mathrm{V}}\, n^{d} \;\leq\; |B^{\omega}(x, n)|.
    \end{align*}
     
  \item
    (Weak) relative isoperimetric inequality. There exists $\mathcal{S}^{\omega}(x, n) \subset \mathcal{C}_{\infty}(\omega)$ connected such that $B^{\omega}(x, n) \subset \mathcal{S}^{\omega}(x, n) \subset B^{\omega}(x, C_{\mathrm{W}} n)$ and
    \begin{align*}
      |\partial_{\mathcal{S}^{\omega}(x, n)}^{\omega} A|
      \;\geq\;
      C_{\mathrm{riso}}\, n^{-1}\, |A|
    \end{align*}
    for every $A \subset \mathcal{S}^{\omega}(x, n)$ with $|A| \leq \tfrac{1}{2}\, |\mathcal{S}^{\omega}(x, n)|$.
  \end{enumerate}
\end{definition}
\begin{assumption}\label{ass:cluster} 
  For some $\theta, \delta \in (0,1)$, $C_{\mathrm{V}} \in (0, 1]$, $C_{\mathrm{riso}} \in (0, \infty)$ and $C_{\mathrm{d}}, C_{\mathrm{W}} \in [1, \infty)$, assume that there exist $\Omega_{\mathrm{reg}} \in \mathcal{F}$ with $\prob[\Omega_{\mathrm{reg}}] = 1$ and a non-negative random variable $N_{0}$ such that $N_{0}(\omega) < \infty$ for any $\omega \in \Omega_{\mathrm{reg}} \cap \Omega_{0}^{*}$, and for all $n \geq N_0(\omega)$ the following hold.
  \begin{enumerate}[(i)]
  \item
    The ball $B^{\omega}(0, n)$ is $\theta$-\emph{very regular}, that is, for every $x \in B^{\omega}(0, n)$ and $r \geq n^{\theta/d}$ the ball $B^{\omega}(x, r)$ is regular with constants $C_{\mathrm{V}}$, $C_{\mathrm{riso}}$ and $C_{\mathrm{W}}$.  

  \item[(ii)]
    For any $x, y \in [-n, n]^{d} \cap \mathcal{C}_{\infty}(\omega)$,
    \begin{align*}
      d^{\omega}(x, y)
      \;\leq\;
      \bigl( C_{\mathrm{d}}\, |x - y|_{\infty} \bigr) \vee n^{1-\delta}.
    \end{align*}
  \end{enumerate}
\end{assumption}
\begin{remark} \label{rem:ass_cluster} 
  (i) Assumption~\ref{ass:cluster} holds, for instance, on supercritical i.i.d.\ Bernoulli percolation clusters for all $\theta \in (0,1)$, see~\cite{Ba04}. Moreover, Assumption~\ref{ass:cluster} is even satisfied for a class of percolation models with long range correlations, see \cite[Proposition~4.3]{Sa17} and \cite[Theorems~2.3]{DRS14}. For more details and examples, including level sets of DGFFs, we refer to \cite[Examples~1.11--1.13]{DNS18} and references therein.

  (ii) For any bounded and simply connected $D \subset \mathbb{R}^{d}$, setting $n D \ldef \{z \in \mathbb{R}^{d} : z/n \in D\}$,  there exists $r_{D} \in (0, \infty)$ such that $n D \subset [-r_{D} n, r_{D} n]^{d}$. Then, Assumption~\ref{ass:cluster}-(ii) immediately implies that, for $\prob_{0}$-a.e.\ $\omega$ and all $n \geq \max\{ N_{0}(\omega)/r_{D}, 1 \}$, we have $n D \cap \mathcal{C}_{\infty}(\omega) \subset B^{\omega}(0, C_{\mathrm{d}} r_{D} n)$.  
\end{remark}
\begin{assumption}\label{ass:pq}
  There exist $p, q \in [1, \infty]$ and $\theta \in (0, 1)$ satisfying
  \begin{align} \label{eq:cond:pq_rg}
    \frac{1}{p} \,+\, \frac{1}{q}
    \;<\;
    \frac{2(1-\theta)}{d-\theta},
  \end{align}
  such that for any $e \in E_{d}$,
  \begin{align*}
    \mean\bigl[ \omega(e)^{p} \bigr] \;<\; \infty
    \qquad \text{and} \qquad
    \mean\bigl[ \omega(e)^{-q} \indicator_{\{e \in \mathcal{O}\}} \bigr] \;<\; \infty,
  \end{align*}
  where we used the convention that $0/0 = 0$.
\end{assumption}
One of the most important results in the theory of random walks among random conductances is the \emph{quenched invariance principle} or \emph{quenched functional central limit theorem} (QFCLT), which describes a weak convergence of the rescaled random walk towards Brownian motion. Throughout the paper, for any $T>0$, we will denote by $(\Prob_{x}^{\Sigma})_{x \in \mathbb{R}^{d}}$ the law of a Brownian motion with a non-degenerate covariance matrix $\Sigma^2 = \Sigma \cdot \Sigma^{T}$, defined on the path space $C([0,T], \mathbb{R}^{d})$ (or $D([0,T],\mathbb{R}^{d})$ if convenient), and we will then write $(W_{t})_{0 \leq t \leq T}$ for the coordinate process. In particular, $\Prob_{x}^{\Sigma}\bigl[ W_{t} \in \mathrm{d}y \bigr] = k_{t}^{\Sigma}(x,y)\, \mathrm{d}y$
with
\begin{align*}
  k_{t}^{\Sigma}(x, y)
  \;=\;
  \frac{1}{\sqrt{(2\pi t)^{d} \det \Sigma^2}}
  \exp\biggl(-\frac{(x-y) (\Sigma^{2})^{-1} (x-y)}{2t}\biggr).
\end{align*}
\begin{theorem} [QFCLT \cite{DNS18}]\label{thm:QFCLT:RG}
  Suppose there exist $\theta \in (0, 1)$ and $p, q \in [1, \infty]$ such that Assumptions~\ref{ass:law}, \ref{ass:cluster}-(i) and \ref{ass:pq} hold.  Set $X_{t}^{n} \ldef n^{-1} X_{t n^{2}}$ for any $n \in \mathbb{N}$ and $t \geq 0$. Then, for $\prob_{0}$-a.e.\ $\omega$, the process $X^{n} \equiv \bigl(X_t^{n}\bigr)_{t \geq 0}$, converges (under $\Prob_{0}^\omega$) in law towards a Brownian motion on $\mathbb{R}^{d}$ with a deterministic non-degenerate covariance matrix $\Sigma^2$. That is, $\prob_0$-a.s., for every $T > 0$ and every bounded continuous function $F$ on the Skorohod space $D([0,T], \mathbb{R}^{d})$,
  \begin{align} \label{eq:convFCLT}
    \Mean_{0}^{\omega}\bigl[ F(X^{n}) \bigr]
    \overset{n\to \infty} \longrightarrow
    \Mean_{0}^{\Sigma}\bigl[ F(W) \bigr].
  \end{align}
\end{theorem}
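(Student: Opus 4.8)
This statement is precisely the quenched invariance principle of \cite{DNS18}, so the task really reduces to checking that our Assumptions~\ref{ass:law}, \ref{ass:cluster}-(i) and \ref{ass:pq} are at least as strong as the hypotheses imposed there; since the same machinery will be needed again below for the Green's function, let me recall the architecture of the argument. The starting point is the \emph{environment seen from the particle}, $\bar\omega_{t} \ldef \tau_{X_{t}}\omega$, a Markov process on $\Omega_{0}^{*}$ for which, by Assumption~\ref{ass:law}, the conditional law $\prob_{0}$ is reversible and ergodic. This is the input for all the ergodic-theoretic steps that follow.

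First I would construct the corrector $\chi\colon \Omega_{0}^{*}\times\mathcal{C}_{\infty}\to\mathbb{R}^{d}$ such that the harmonic coordinates $\Phi(\omega,x)\ldef x-\chi(\omega,x)$ are $\mathcal{L}^{\omega}$-harmonic, equivalently such that $M_{t}\ldef\Phi(\omega,X_{t})$ is a square-integrable martingale under $\Prob_{0}^{\omega}$; one obtains $\chi$ as an element of a suitable $L^{2}$-type space of gradient fields over $\prob_{0}$, and the lower-moment bound $\mean[\omega(e)^{-q}\indicator_{\{e\in\mathcal{O}\}}]<\infty$ is what keeps the associated Dirichlet form under control. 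The $L^{2}$-ergodic theorem applied to $\bar\omega$ together with the martingale central limit theorem then shows that $n^{-1}M_{tn^{2}}$ converges to a Brownian motion with a deterministic covariance $\Sigma^{2}$, which is non-degenerate because the cluster percolates in every coordinate direction.

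The genuine obstacle is the \emph{sublinearity of the corrector}: one needs $n^{-1}\max_{x\in B^{\omega}(0,C_{\mathrm{d}}n)}|\chi(\omega,x)|\to 0$ for $\prob_{0}$-a.e.\ $\omega$. This is where Assumptions~\ref{ass:cluster}-(i) and \ref{ass:pq} enter: the very regular balls supply volume regularity together with a weak relative isoperimetric inequality, hence $\ell^{p}$-Sobolev inequalities on large balls of the cluster; feeding these into a Moser iteration for the sub-/super-harmonic components of $\chi$, and using the moment bounds on $\omega(e)$ and on $\omega(e)^{-1}$ to absorb the error terms, yields an $\ell^{\infty}$-bound for $\chi$ on $B^{\omega}(0,n)$ in terms of its $\ell^{1}$-average, which in turn is $o(n)$ by the spatial ergodic theorem. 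The exponent balance that makes this iteration close is exactly $\tfrac1p+\tfrac1q<\tfrac{2(1-\theta)}{d-\theta}$, which is why this particular condition appears in Assumption~\ref{ass:pq}.

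Finally, tightness of $(X^{n})$ in the Skorohod topology follows from a maximal inequality for the martingale part combined with the sublinearity bound for $\chi$; putting tightness together with the convergence of the finite-dimensional distributions of $n^{-1}M_{\cdot\,n^{2}}$ and the sublinearity of $\chi$ yields \eqref{eq:convFCLT}. I expect the sublinearity estimate to be the crux of the matter; once the corrector has been built, the remaining steps are comparatively soft.
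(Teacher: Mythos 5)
Your proposal is correct and coincides with what the paper does: Theorem~\ref{thm:QFCLT:RG} is not proved here at all but quoted directly from \cite{DNS18}, whose hypotheses are exactly Assumptions~\ref{ass:law}, \ref{ass:cluster}-(i) and \ref{ass:pq}. Your sketch of the corrector construction, sublinearity via Sobolev/Moser iteration on very regular balls, and the martingale CLT is an accurate summary of the argument in that reference, so nothing further is needed.
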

From now on we will fix a bounded domain $D \subset \mathbb{R}^{d}$, that is, an open, bounded and simply connected subset of $\mathbb{R}^{d}$, and throughout the paper we will assume that its boundary points are regular in the following sense.
\begin{definition} \label{def:regular}
  We call a point $z \in \partial D$ \emph{strongly regular} if $\Prob_{z}^{\Sigma}[ \tau_{\bar{D}}(W) = 0] = 1$. We say that $D$ is strongly regular if every point $z \in \partial D$ is strongly regular.
\end{definition}
Note that a regular point has the property that the Brownian motion starting at it will essentially immediately hit $\bar{D}^{\mathrm{c}}$.
\begin{remark}
  This definition of strong regularity is directly adapted from the one given in \cite[Exercise~10.2.20]{St11}. Note that it differs from the common definition of a regular point in the literature, see e.g.\ \cite{KS91}, which only requires the first hitting time of $D^{\mathrm{c}}$  to be zero almost surely. A well-known criterion for a point to be regular in this sense is the cone condition. More precisely, $x \in \partial D$ is regular if there exists a cone $V$ with apex $x$ and opening angle $\alpha > 0$ such that, for some $r > 0$, $V \cap B_{r}(x) \subset D^{\mathrm{c}}$, where $B_{r}(x)$ denotes the open ball in $\mathbb{R}^{d}$, equipped with the usual Euclidean norm, centred at $x$ with radius $r$, see e.g.~\cite[Chapter~4, Theorem~2.19]{KS91}. The same proof also shows that the cone condition is sufficient for a point to be strongly regular in the sense of Definition~\ref{def:regular} for Brownian motions with a non-degenerate covariance matrix. The main ingredients are Blumenthal's zero-one law and the invariance of the cone with respect to scaling.
\end{remark}
Further, let $k^{\Sigma, D}\colon (0, \infty) \times \mathbb{R}^{d} \times \mathbb{R}^{d} \to [0, \infty)$ denote the heat kernel of the Brownian motion, $W$, with covariance matrix $\Sigma^{2}$ killed upon exiting the domain $D$. More precisely, $k^{\Sigma, D}$ is the jointly continuous function such that $\Prob_{x}^{\Sigma}\bigl[ W_{t}^{D} \in \mathrm{d}y \bigr] = k_{t}^{\Sigma, D}(x,y)\, \mathrm{d}y$ for $t > 0$ and $x \in D$, where $W^{D}$ denotes the coordinate process killed upon exiting of $D$. The associated Green's function is given by
\begin{align}\label{eq:Green_killed_cont}
  g^{\Sigma}_{D}(x,y)
  \;\ldef\;
  \int_{0}^{\infty} k_{t}^{\Sigma, D}(x,y)\, \mathrm{d}t
  \;\in\;
  [0, \infty].
\end{align}

Just as the Brownian motion can be realized as the scaling limit of many random curve ensembles, including simple random walks, the RCM (cf.\ Theorem~\ref{thm:QFCLT:RG}) and many others, as our first main result we will show that continuum Gaussian free fields emerge as the scaling limit of inhomogeneous discrete Gaussian free fields. However, the continuum Gaussian free fields cannot be defined pointwise, as the variance at any point would have to be infinite. Instead, they are defined as a restricted family of random distributions or generalised functions in the sense of Schwartz, taking values the space $H^{-s}(D)$ for some $s > 0$ the definition of which we now recall. For any $s \geq 0$, let $H_{0}^{s}(D)$ be the Sobolev space defined as the Banach space closure of $C_{0}^{\infty}(D)$ under the norm $\| \cdot \|_{H_{0}^{s}(D)}$ induced by the scalar product
\begin{align}
  \label{eq:scalar_product:Sobolev_space}
  (f, g)_{H_{0}^{s}(D)}
  \;\ldef\;
  \int_{D}
    \bigl((1 - \Delta)^{s/2}f\bigr)(x)\, \bigl((1 - \Delta)^{s/2} g\bigr)(x)\,
  \mathrm{d}x, 
\end{align}
where $(1-\Delta)^{s/2} f \ldef \mathcal{F}^{-1}\bigl((1+|\cdot|_{2}^{2})^{s/2}(\mathcal{F} f)\bigr)$ and $\mathcal{F}f$ denotes Fourier transform of $f$. Let $H^{-s}(D)$ be its dual space with respect to the $L^{2}(D)$ scalar product. For further details see, for instance, \cite[Chapter~5]{Ev10} or \cite[Chapter~4]{Ta23}.
\begin{definition}[Continuum Gaussian free field (CGFF)]
  Let $d \geq 2$,  $D \subset \mathbb{R}^{d}$ be a bounded domain, $c_{\sigma} > 0$ and $\Sigma \in \mathbb{R}^{d \times d}$ be a positive definite matrix. The \emph{continuum Gaussian free field on $D$ with parameters $c_{\sigma}$ and $\Sigma$} is an assignment $f \mapsto \Phi^{\Sigma, D}(f)$ of a random variable to each bounded measurable $f\colon D \to \mathbb{R}$ such that
  \begin{enumerate}[(i)]
  \item 
    $\Phi^{\Sigma, D}$ is a.s.\ linear, i.e.\ for all bounded measurable $f, g\colon D \to \mathbb{R}$ and $a, b \in \mathbb{R}$,
    \begin{align*}
      \Phi^{\Sigma, D}(a f + b g)
      \;=\;
      a\, \Phi^{\Sigma, D}(f) + b\, \Phi^{\Sigma, D}(g)
      \qquad \text{a.s.},
    \end{align*}

  \item for all bounded measurable $f\colon D \to \mathbb{R}$,
    \begin{align*}
      \Phi^{\Sigma, D}(f)
      \overset{\mathrm{law}}{\;=\;}
      \mathcal{N}\bigl(0, \sigma_{\Sigma}^{2}(f) \bigr),
    \end{align*}
    where
    \begin{align*}
      \sigma_{\Sigma}^{2}(f)
      \;\ldef\;
      c_{\sigma}\,
      \int_{D \times D} f(x)\, f(y)\, g_{D}^{\Sigma}(x, y)\, \mathrm{d}x\, \mathrm{d}y.
    \end{align*}
  \end{enumerate}
\end{definition}
We also extend this definition of the CGFF to let it act on functions in $H_{0}^{s}(D)$, for some $s > 0$.
\begin{theorem}[Scaling limit of inhomogeneous DGFF]\label{thm:scalingCGF_rg}
  Let $d \geq 2$ and $D \subset \mathbb{R}^{d}$ be a bounded, strongly regular domain. Suppose there exist $\theta \in (0, 1)$ and $p, q \in [1, \infty]$ such that Assumptions~\ref{ass:law}, \ref{ass:cluster} and \ref{ass:pq} hold. For any integrable function $f\colon D \to \mathbb{R}$, define
  \begin{align}
    \label{eq:def:functional:DGFF}
    \Phi_{n}^{D}(f)
    \;\ldef\;
    n^{d/2-1}
    \int_{D}
    f(x)\, \varphi^{n D}_{\lfloor nx \rfloor}\,
    \indicator_{\{ \lfloor nx \rfloor \in \mathcal{C}_{\infty}(\omega) \}}\,
    \mathrm{d}x,
    \qquad n \in \mathbb{N}.
  \end{align}
  Then, for $\prob_{0}$-a.e.\ $\omega$, under $\probPhi^{\omega}$
  \begin{align*}
    \Phi_{n}^{D}
    \underset{n \to \infty}{\overset{\text{law}}{\;\longrightarrow\;}}
    \Phi^{\Sigma, D},
  \end{align*}
  in the strong topology of $H^{-s}(D)$ for any $s > d/2$, where $\Phi^{\Sigma, D}$ is a CGFF with para\-meters $c_{\sigma} = \prob[0 \in \mathcal{C}_{\infty}]$ and $\Sigma$ being the diffusivity matrix in Theorem~\ref{thm:QFCLT:RG}. In particular,  $\Phi^{\Sigma, D}(f) \overset{\text{law}}{=} \mathcal{N}(0, \sigma_{\Sigma}^{2}(f))$ for any $f \in H_{0}^{s}(D)$ where
  \begin{align}\label{def:var_cont_CGF}
    \sigma_{\Sigma}^{2}(f)
    \;=\;
    \prob\bigl[ 0 \in \mathcal{C}_{\infty} \bigr]\,
    \int_{D \times D} f(x)\, f(y)\, g_{D}^{\Sigma}(x, y)\, \mathrm{d}x\, \mathrm{d}y,
    \qquad f \in H_{0}^{s}(D).
  \end{align}
\end{theorem}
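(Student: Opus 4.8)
The plan is to prove convergence in law in the strong topology of $H^{-s}(D)$ for $s>d/2$ by establishing convergence of finite-dimensional distributions together with tightness. For the finite-dimensional distributions, since everything is jointly Gaussian and centred, it suffices to show that for every finite collection $f_{1},\dots,f_{k}$ of test functions the covariance matrix $\bigl(\covPhi^{\omega}(\Phi_{n}^{D}(f_{i}),\Phi_{n}^{D}(f_{j}))\bigr)_{i,j}$ converges $\prob_{0}$-a.s.\ to $\bigl(\sigma_{\Sigma}^{2}(f_{i},f_{j})\bigr)_{i,j}$, where the polarised form is $\sigma_{\Sigma}^{2}(f,g)=\prob[0\in\mathcal{C}_{\infty}]\int_{D\times D}f(x)g(y)g_{D}^{\Sigma}(x,y)\,\md x\,\md y$. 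Using \eqref{eq:covField}, this covariance equals
\begin{align*}
  n^{d-2}\int_{D\times D} f(x)\,g(y)\,g_{nD}^{\omega}\bigl(\lfloor nx\rfloor,\lfloor ny\rfloor\bigr)\,\indicator_{\{\lfloor nx\rfloor,\lfloor ny\rfloor\in\mathcal{C}_{\infty}(\omega)\}}\,\md x\,\md y.
\end{align*}
So the crux is a quenched scaling limit for the killed Green's function, namely that $n^{d-2}g_{nD}^{\omega}(\lfloor nx\rfloor,\lfloor ny\rfloor)$ converges (in a suitably integrated sense) to $\prob[0\in\mathcal{C}_{\infty}]\,g_{D}^{\Sigma}(x,y)$; this is exactly the quenched local limit theorem for the Green's function advertised in the abstract, which I will invoke once it has been proved in the body of the paper. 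The strong regularity of $D$ enters here to guarantee that the killed heat kernel and Green's function behave well near $\partial D$ and that no mass is lost in the limit.

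The second ingredient is tightness in $H^{-s}(D)$. The natural route is to use the compact Sobolev embedding $H^{-s'}(D)\hookrightarrow H^{-s}(D)$ for $d/2<s'<s$ and show that $\sup_{n}\meanPhi^{\omega}\bigl[\Norm{\Phi_{n}^{D}}{H^{-s'}(D)}^{2}\bigr]<\infty$ for $\prob_{0}$-a.e.\ $\omega$. Expanding the $H^{-s'}$ norm in a fixed orthonormal basis $(e_{m})_{m}$ of $L^{2}(D)$ (e.g.\ Dirichlet–Laplacian eigenfunctions), one gets $\meanPhi^{\omega}\bigl[\Norm{\Phi_{n}^{D}}{H^{-s'}(D)}^{2}\bigr]=\sum_{m}\lambda_{m}^{-s'}\,\covPhi^{\omega}(\Phi_{n}^{D}(e_{m}),\Phi_{n}^{D}(e_{m}))$ up to constants; a uniform-in-$n$ bound $\covPhi^{\omega}(\Phi_{n}^{D}(e_{m}),\Phi_{n}^{D}(e_{m}))\le C\,\Norm{e_{m}}{L^{2}(D)}^{2}=C$ together with Weyl asymptotics $\lambda_{m}\asymp m^{2/d}$ makes the series converge for $s'>d/2$. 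That uniform bound in turn follows from a quenched Gaussian-type upper estimate on $g_{nD}^{\omega}$ (dominating it by the whole-space Green's function, which obeys the standard $|x-y|^{2-d}$ bound, here available under Assumption~\ref{ass:pq} via the heat-kernel estimates used to prove the QFCLT); one bounds $n^{d-2}g_{nD}^{\omega}(\lfloor nx\rfloor,\lfloor ny\rfloor)$ pointwise by (a constant times) $|x-y|^{2-d}$ plus a controllable error, and $|x-y|^{2-d}$ is integrable against $|f(x)g(y)|$ on the bounded set $D\times D$.

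Once finite-dimensional convergence and tightness in $H^{-s}$ are in hand, Prokhorov's theorem plus uniqueness of the limiting finite-dimensional distributions (a centred Gaussian measure on $H^{-s}(D)$ is determined by its covariance operator, which is read off from $\sigma_{\Sigma}^{2}$) yields weak convergence of the laws of $\Phi_{n}^{D}$ on $H^{-s}(D)$, i.e.\ convergence in the strong topology of that space. Finally, to extend the action of the limit from bounded measurable $f$ to $f\in H_{0}^{s}(D)$ and to read off \eqref{def:var_cont_CGF}, one checks that $f\mapsto\Phi^{\Sigma,D}(f)$ is an isometry (up to the constant $c_{\sigma}$) from $H_{0}^{s}(D)$ to $L^{2}(\probPhi)$: this uses that $g_{D}^{\Sigma}$ is the Green's operator of $-\tfrac12\nabla\cdot\Sigma^{2}\nabla$ on $D$ with Dirichlet boundary conditions, so that $\int_{D\times D}f(x)f(y)g_{D}^{\Sigma}(x,y)\,\md x\,\md y$ is comparable to an $H^{-1}_{\Sigma}$-type norm of $f$, and one concludes by density of $C_{0}^{\infty}(D)$ in $H_{0}^{s}(D)$.

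I expect the main obstacle to be the quenched local limit theorem for the killed Green's function near the boundary. Interior convergence $n^{d-2}g_{nD}^{\omega}(\lfloor nx\rfloor,\lfloor ny\rfloor)\to\prob[0\in\mathcal{C}_{\infty}]g_{D}^{\Sigma}(x,y)$ for fixed $x\neq y$ in $D$ should follow from the QFCLT combined with equilibration of the killing (the exit time $\tau_{nD}(X)$ scaled by $n^{2}$ converges to $\tau_{D}(W)$), but controlling this uniformly enough to integrate it against $f\otimes g$ — in particular ruling out anomalous contributions from paths that exit $nD$, and handling the diagonal singularity and the region near $\partial D$ where strong regularity is used — is the delicate part. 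The required estimates (parabolic Harnack inequalities, near-diagonal heat-kernel bounds, control of boundary layers) are precisely what the "quenched local limit theorem for the Green's function with Dirichlet boundary conditions" in the abstract is designed to supply, so modulo that theorem the argument above is routine soft analysis.
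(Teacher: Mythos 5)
Your overall skeleton (Gaussianity reduces fdd convergence to covariance convergence; tightness via a uniform second moment bound in an eigenbasis with Weyl asymptotics, then compact embedding and Prokhorov) matches the paper's, but the two key analytic inputs you invoke are not available under the paper's hypotheses, and this is a genuine gap. For the covariance step you propose to integrate the quenched local limit theorem for $g^{\omega}_{nD}$ against $f\otimes g$ over $D\times D$. The local limit theorem actually proved in the paper (Theorem~\ref{thm:LCLT_green}) is uniform only on $K_{\varepsilon,\delta}$, i.e.\ \emph{away from the diagonal and away from $\partial D$}; to pass to the limit in the integral you would additionally need an integrable uniform domination of $n^{d-2}g^{\omega}_{nD}(\lfloor nx\rfloor,\lfloor ny\rfloor)$ up to the diagonal and the boundary. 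The bound you suggest, a pointwise $|x-y|^{2-d}$ estimate coming from ``the heat-kernel estimates used to prove the QFCLT'', does not exist in this setting: under Assumption~\ref{ass:pq} the conductances are degenerate and unbounded, only moment bounds are assumed, and no Gaussian-type heat-kernel or Green's function upper bounds are established (the only a-priori bound, Corollary~\ref{cor:Green_apriori}, is again off-diagonal and involves averaged norms of $\mu^{\omega},\nu^{\omega}$); moreover in $d=2$ the claimed bound is false, since the killed Green's function diverges logarithmically near the diagonal and there is no whole-space Green's function to compare with. The paper avoids all of this: it never uses Theorem~\ref{thm:LCLT_green} for the scaling limit (logically the LCLT is proved \emph{after}, from the same tools). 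Instead it rewrites the variance as a spatially averaged occupation-time functional, $n^{-d}\sum_{z}f_{\varepsilon}(z/n)\,\Mean^{\omega}_{z}\bigl[\int_{0}^{\tau_{D}(X^{n})}f_{\varepsilon}(X^{n}_{t})\,\mathrm{d}t\bigr]$, and applies the extended quenched invariance principle for such functionals and averaged initial points (Theorem~\ref{thm:FCLT:Poisson}), with Lusin/Egorov approximation of bounded measurable $f$ and the exit-time estimate of Corollary~\ref{cor:exit} to control the errors; strong regularity of $D$ enters there to make the functional $w\mapsto\int_{0}^{\tau_{D}(w)}g(w_{t})\,\mathrm{d}t$ a.s.\ continuous under the Brownian law, not to control the Green's function near $\partial D$.

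The same issue affects your tightness argument: the uniform bound $\covPhi^{\omega}(\Phi^{D}_{n}(e_{m}),\Phi^{D}_{n}(e_{m}))\leq C$ cannot be obtained from a pointwise $|x-y|^{2-d}$ domination of the rescaled Green's function, for the reasons above. The paper instead bounds $\varGFF^{\omega}[\Phi^{D}_{n}(e^{D}_{k})]$ by $(\lambda^{\omega}_{1}n^{d+2})^{-1}\sum_{z}e^{D}_{k,n}(z/n)^{2}$, where $\lambda^{\omega}_{1}$ is the principal Dirichlet eigenvalue of $-\mathcal{L}^{\omega}$ on $nD\cap\mathcal{C}_{\infty}(\omega)$, and then uses the weighted Sobolev inequality (Proposition~\ref{prop:Sobolev:ineq}) together with the ergodic theorem to show $\lambda^{\omega}_{1}\gtrsim\bigl(n^{2}\Norm{\nu^{\omega}}{d'/2,B^{\omega}(0,C_{\mathrm{d}}r_{D}n)}\bigr)^{-1}$, which gives the required uniform-in-$n$ second moment bound without any pointwise Green's function estimate. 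So while your high-level plan is sound, the proof as proposed does not close: you would either have to prove the missing near-diagonal/boundary domination (not done in the paper and likely unavailable under these assumptions), or replace both key inputs by the occupation-time/spectral-gap arguments the paper actually uses.
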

\begin{remark} 
 (i) Note that in Theorem~\ref{thm:scalingCGF_rg} the height variables are scaled by $n^{d/2-1}$ while the conventional scaling for a central limit theorem is $n^{d/2}$. The different scaling is required due to strong correlations of the  height variables (cf.\ \cite{NS97, BS11}), in contrast to the scaling limit of the gradient field, which has weaker correlations and only requires the standard scaling $n^{d/2}$ (cf.\ \cite{GOS01, NW18}).

 (ii) Theorem~\ref{thm:scalingCGF_rg} can be shown under the slightly weaker condition where Assumption~\ref{ass:cluster}-(ii) is replaced by the statement in Remark~\ref{ass:cluster}-(ii). 
\end{remark}
The corresponding result for the homogeneous DGFF with constant conductances on $\mathbb{Z}^{d}$ can be found in \cite{Bi20}. More recently, a similar scaling limit for inhomogeneous DGFFs on convex or smooth domains or with periodic boundary conditions have been obtained in \cite{CR24} for uniformly elliptic conductances with finite range dependence.  For inhomogeneous DGFFs with uniformly elliptic ergodic conductances, existence and uniqueness of infinite-volume gradient Gibbs measures have been established in \cite{CK12, CK15}, and level set percolation has been studied in \cite{CN21}. 

Inhomogeneous DGFFs with random conductances also appear in a description for a class of gradient models with a non-convex potential, see \cite{BS11} and \cite[Sections~1.4 and 6]{Bi11}. In \cite{BS11} this connection has been used to show scaling limits similar to Theorem~\ref{thm:scalingCGF_rg} for those gradient models. We also refer to \cite[Theorem~A]{NS97}, \cite[Corollary~2.2]{GOS01}, \cite[Theorem~1.13]{AT21}  and \cite[Theorem~9]{NW18} for similar GFF scaling limits for interface models associated with gradient Gibbs measures on $\mathbb{Z}^{d}$, in particular see the fluctuation limit for interface models with stricly convex potentials on bounded domains in \cite{Mi11}.

Similar scaling limits for DGFFs on a class of manifolds have been obtained in \cite{CvG20}. Scaling limits for inhomogeneous bi-Laplacian fields, a.k.a.\ membrane models, towards continuum bi-Laplacian fields have been shown in \cite{CR24}, see \cite{CDH19} for the corresponding result for homogeneous membrane models. Such continuum bi-Laplacian fields also appear as the limit of the rescaled odometer functions in sandpile models, see \cite{CHR18}.

As our second main result, we establish a quenched local limit theorem for the Green's function, which asserts that, under diffusive scaling, the Green's function of the killed VSRW $X$ converges uniformly on compact sets to the Green kernel of the killed Brownian motion with covariance matrix $\Sigma^{2}$.

For any $\omega \in \Omega^{*}$ and $n \in \mathbb{N}$, let $\pi_{n}\colon \mathbb{R}^{d} \to \mathcal{C}_{\infty}(\omega)$ be a function that maps any $x \in \mathbb{R}^{d}$ to a closest point of $nx$ in $\mathcal{C}_{\infty}(\omega)$, where we choose a fixed ordering on $\mathbb{Z}^{d}$ to break ties. Moreover, we write $\operatorname{dist}(x, A) \ldef \inf\{|x - a|_{2} : a  \in A\}$ to denote the distance in $\mathbb{R}^{d}$ between a point $x \in \mathbb{R}^{d}$ and a non-empty subset $A \subset \mathbb{R}^{d}$.
\begin{theorem}[Quenched local limit theorem for Green's function]\label{thm:LCLT_green}
  Let $d \geq 2$ and $D \subset \mathbb{R}^{d}$ be a bounded, strongly regular domain. Suppose there exist $\theta \in (0, 1)$ and $p, q \in [1, \infty]$ such that Assumptions~\ref{ass:law},~\ref{ass:cluster} and~\ref{ass:pq} hold. For any $\varepsilon, \delta > 0$, set
  \begin{align} \label{eq:defK_eps}
    K_{\varepsilon, \delta}
    \;\ldef\;
    \big\{
      (x, y) \in D \times D :
      \operatorname{dist}(x, \partial D) \wedge \operatorname{dist}(y, \partial D)
      \geq \delta, |x - y|_{2} \geq \varepsilon
    \big\}.
  \end{align}
  Then, for any $\varepsilon, \delta > 0$ with $0 < \varepsilon < \delta$ such that $K_{\varepsilon, \delta} \neq \emptyset$ and $\prob_{0}$-a.e.\ $\omega$,
  \begin{align*}
    \lim_{n \to \infty} \sup_{(x, y) \in K_{\varepsilon, \delta}}
    \biggl|
      n^{d-2} g^{\omega}_{n D}(\pi_{n}(x), \pi_{n}(y)) -  \frac{g_{D}^{\Sigma}(x, y)}{\mathbb{P}[0 \in \mathcal{C}_{\infty}]}
    \biggr|
    \;=\;
    0,
  \end{align*}
  where $\Sigma$ is as in Theorem~\ref{thm:QFCLT:RG}.
\end{theorem}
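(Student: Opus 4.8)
The plan is to represent $g^{\omega}_{nD}$ as the time integral of the killed heat kernel of the VSRW, to prove a quenched local limit theorem for that killed heat kernel, and then to interchange the limit $n \to \infty$ with the integral by means of a deterministic integrable majorant that is uniform in $n$. Writing $p^{\omega, nD}_{t}(x', y') \ldef \Prob^{\omega}_{x'}[X_{t} = y',\, t < \tau_{nD}(X)]$ and substituting $t \mapsto n^{2}t$ gives, for $x, y \in D$,
\begin{align*}
  n^{d-2}\, g^{\omega}_{nD}\bigl(\pi_{n}(x), \pi_{n}(y)\bigr)
  \;=\;
  \int_{0}^{\infty} n^{d}\, p^{\omega, nD}_{n^{2}t}\bigl(\pi_{n}(x), \pi_{n}(y)\bigr)\, \mathrm{d}t,
  \qquad
  \frac{g^{\Sigma}_{D}(x, y)}{\prob[0 \in \mathcal{C}_{\infty}]}
  \;=\;
  \int_{0}^{\infty} \frac{k^{\Sigma, D}_{t}(x, y)}{\prob[0 \in \mathcal{C}_{\infty}]}\, \mathrm{d}t .
\end{align*}
So it suffices to show: (a) for $\prob_{0}$-a.e.\ $\omega$, $n^{d} p^{\omega, nD}_{n^{2}t}(\pi_{n}(x), \pi_{n}(y)) \to k^{\Sigma, D}_{t}(x, y)/\prob[0 \in \mathcal{C}_{\infty}]$ uniformly over $(x, y) \in K_{\varepsilon, \delta}$ and $t$ in compact subsets of $(0, \infty)$; and (b) for $\prob_{0}$-a.e.\ $\omega$ there are $n_{0}(\omega, \varepsilon) \in \mathbb{N}$ and an integrable $\rho\colon (0, \infty) \to (0, \infty)$ with $n^{d} p^{\omega, nD}_{n^{2}t}(\pi_{n}(x), \pi_{n}(y)) \leq \rho(t)$ for all $n \geq n_{0}$, $t > 0$ and $(x, y) \in K_{\varepsilon, \delta}$.

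For (b) I would combine $p^{\omega, nD}_{t} \leq p^{\omega}_{t}$ with the quenched Gaussian upper bounds for the VSRW on very regular clusters, which hold under Assumptions~\ref{ass:law}--\ref{ass:pq} once $n$ exceeds a random threshold: since $|\pi_{n}(x) - \pi_{n}(y)|_{1} \gtrsim \varepsilon n$ on $K_{\varepsilon, \delta}$, they give $n^{d} p^{\omega, nD}_{n^{2}t}(\pi_{n}(x), \pi_{n}(y)) \lesssim t^{-d/2}\, \me^{-c\varepsilon^{2}/t}$ for $t$ in a bounded range (with much faster decay in the very off-diagonal regime $t \lesssim 1/n$), which is integrable near $0$. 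For large $t$ I would exploit the killing: by reversibility with respect to the counting measure, $p^{\omega, nD}_{n^{2}t}(x', y') \leq p^{\omega, nD}_{n^{2}t/2}(x', x')^{1/2}\, p^{\omega, nD}_{n^{2}t/2}(y', y')^{1/2}\, \me^{-\lambda_{1}(nD)\, n^{2}t/2}$, where $\lambda_{1}(nD)$ is the bottom of the Dirichlet spectrum of $-\mathcal{L}^{\omega}$ on $nD \cap \mathcal{C}_{\infty}(\omega)$; bounding the on-diagonal factors by $C(n^{2}t)^{-d/2}$ and using $\lambda_{1}(nD) \geq c\, n^{-2}$ yields $n^{d} p^{\omega, nD}_{n^{2}t}(\pi_{n}(x), \pi_{n}(y)) \lesssim t^{-d/2}\, \me^{-ct}$. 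The eigenvalue bound follows from monotonicity of $\lambda_{1}$ under domain inclusion, using $nD \subset [-r_{D}n, r_{D}n]^{d}$ (cf.\ Remark~\ref{rem:ass_cluster}), together with a Faber--Krahn inequality on the infinite cluster, which is a consequence of the volume regularity and relative isoperimetric inequality built into Assumption~\ref{ass:cluster}. The two regimes produce an admissible $\rho$; note the exponential decay in $t$ is indispensable when $d = 2$, where $t^{-d/2}$ alone fails to be integrable at infinity.

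For (a) I would start from the first-exit decomposition
\begin{align*}
  p^{\omega, nD}_{n^{2}t}(x', y')
  \;=\;
  p^{\omega}_{n^{2}t}(x', y')
  \,-\,
  \Mean^{\omega}_{x'}\Bigl[ \indicator_{\{\tau_{nD} \leq n^{2}t\}}\, p^{\omega}_{n^{2}t - \tau_{nD}}\bigl(X_{\tau_{nD}}, y'\bigr) \Bigr],
\end{align*}
and its continuum analogue $k^{\Sigma, D}_{t}(x, y) = k^{\Sigma}_{t}(x, y) - \Mean^{\Sigma}_{x}[ \indicator_{\{\tau_{D} \leq t\}}\, k^{\Sigma}_{t - \tau_{D}}(W_{\tau_{D}}, y) ]$. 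The first terms, multiplied by $n^{d}$, match in the limit by the quenched local limit theorem for the free heat kernel of the VSRW — available under our assumptions as a consequence of Theorem~\ref{thm:QFCLT:RG} and the quenched Gaussian estimates — which gives $n^{d} p^{\omega}_{n^{2}t}(\pi_{n}(x), \pi_{n}(y)) \to k^{\Sigma}_{t}(x, y)/\prob[0 \in \mathcal{C}_{\infty}]$ uniformly on compact sets. For the second term I would rescale time and space and use that, by Theorem~\ref{thm:QFCLT:RG} (applied from the starting point $\pi_{n}(x)$) and the strong regularity of $D$ — which makes $w \mapsto (\tau_{D}(w), w_{\tau_{D}(w)})$ continuous at $\Prob^{\Sigma}_{x}$-a.e.\ path, so that the continuous mapping theorem applies and pathological grazing of $\partial D$ is excluded — the pair $(n^{-2}\tau_{nD}(X),\, n^{-1}X_{\tau_{nD}})$ converges in law under $\Prob^{\omega}_{\pi_{n}(x)}$ to $(\tau_{D}(W), W_{\tau_{D}})$. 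Combining this with the free-heat-kernel local limit theorem applied to the integrand — now with running starting point $n^{-1}X_{\tau_{nD}}$, which lies near the compact set $\partial D$ and at distance $\gtrsim \delta$ from $y$, hence strictly off-diagonal — and dominating the integrand by the Gaussian estimates (which in particular controls exit times close to $t$, where the residual heat kernel is small since $|X_{\tau_{nD}} - y'|_{1} \gtrsim \delta n$), the second term converges to $\Mean^{\Sigma}_{x}[ \indicator_{\{\tau_{D} \leq t\}}\, k^{\Sigma}_{t - \tau_{D}}(W_{\tau_{D}}, y) ]/\prob[0 \in \mathcal{C}_{\infty}]$. Subtracting gives the pointwise statement in (a); uniformity over $(x, y, t) \in K_{\varepsilon, \delta} \times [t_{1}, t_{2}]$ would follow by a compactness/subsequence argument using joint continuity of $k^{\Sigma, D}$.

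Granting (a) and (b), I would conclude by splitting $\int_{0}^{\infty} = \int_{0}^{t_{1}} + \int_{t_{1}}^{t_{2}} + \int_{t_{2}}^{\infty}$: the two tails of $\int_{0}^\infty n^{d} p^{\omega, nD}_{n^{2}t}(\pi_{n}(x), \pi_{n}(y))\,\mathrm{d}t$ are bounded, uniformly in $n$ and in $(x, y) \in K_{\varepsilon, \delta}$, by $\int_{(0, t_{1}) \cup (t_{2}, \infty)} \rho(t)\, \mathrm{d}t$, which tends to $0$ as $t_{1} \downarrow 0$, $t_{2} \uparrow \infty$; the corresponding tails of $\int_{0}^{\infty} k^{\Sigma, D}_{t}(x, y)\, \mathrm{d}t$ vanish uniformly on $K_{\varepsilon, \delta}$ by the same kind of bound; and on $[t_{1}, t_{2}]$ the uniform convergence from (a) together with bounded convergence yields uniform convergence of the corresponding integrals. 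Letting $t_{1} \downarrow 0$ and $t_{2} \uparrow \infty$ at the end completes the proof. The main obstacle is step~(a): passing to the limit in the first-exit decomposition forces one to handle $\partial D$, taking the limit simultaneously in the exit time and the exit position (which is exactly where strong regularity enters) and making the free-heat-kernel local limit theorem uniform both in the boundary starting point and over a compact range of times while controlling the degeneracy as the residual time $t - \tau_{D}$ shrinks to zero; arranging these so that the resulting convergence is uniform over $K_{\varepsilon, \delta}$ is the delicate part. The Faber--Krahn/eigenvalue input in step~(b), while more routine, is likewise essential, particularly for $d = 2$.
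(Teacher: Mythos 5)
Your architecture (write $n^{d-2}g^{\omega}_{nD}$ as a time integral of the rescaled killed heat kernel, prove a local limit theorem for that kernel, and dominate in $t$) is a genuinely different route from the paper, which never touches parabolic estimates: the paper smears the Green's function against bump functions around $(x_{0},y_{0})$, identifies the smeared quantity as the integral functional appearing in the extended QFCLT of Theorem~\ref{thm:FCLT:Poisson}, removes the smearing via the a-priori bound of Corollary~\ref{cor:Green_apriori} and the elliptic H\"older/oscillation estimate of Proposition~\ref{prop:Holdercont} (the Green's function being $\mathcal{L}^{\omega}$-harmonic off the diagonal), and then upgrades pointwise to uniform convergence on $K_{\varepsilon,\delta}$ by a covering argument. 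However, your route has a genuine gap at its two key inputs. First, step~(b) rests on ``quenched Gaussian upper bounds for the VSRW on very regular clusters, which hold under Assumptions~\ref{ass:law}--\ref{ass:pq}''; no such bounds are proved in the paper, and in this generality (ergodic, \emph{unbounded} conductances with only the moment condition~\ref{ass:pq}, on a random cluster) they are not available in the form you need. For the VSRW with unbounded conductances the off-diagonal decay is governed by an intrinsic metric that can be much smaller than the Euclidean distance (the walk can traverse distance $\varepsilon n$ quickly through atypically strong edges), so a majorant of the form $t^{-d/2}\me^{-c\varepsilon^{2}/t}$, valid for \emph{all} $t>0$, all pairs in $K_{\varepsilon,\delta}$ and all $n$ beyond a single random threshold, does not follow from the cited circle of results; the regime $t\lesssim 1/n$ that you wave at is precisely where such bounds are delicate or fail. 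The on-diagonal bound and the eigenvalue estimate $\lambda_{1}(nD)\gtrsim n^{-2}$ are indeed obtainable (the paper derives the latter from the weighted Sobolev inequality in the tightness proof), but they do not rescue the small-$t$ majorant.

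Second, step~(a) needs a local limit theorem for the free heat kernel that is uniform in \emph{both} endpoints over boxes of linear size $n$ (your exit decomposition evaluates the kernel at the random exit position $X_{\tau_{nD}}$ on $\partial(nD)$), and uniform over compact time intervals, together with control as the residual time $t-\tau_{nD}$ degenerates. The LLTs available under these assumptions are one-point statements; making them uniform in the starting point over the macroscopic box requires exactly the ergodic-averaging plus oscillation/H\"older machinery (Theorem~\ref{thm:osc}, Corollary~\ref{cor:KrengelPyke}) that the paper deploys on the elliptic side, so your plan does not avoid that work, it only relocates it, while additionally demanding the unavailable off-diagonal bounds above. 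In short, the strategy is the standard one under uniform ellipticity, but under the hypotheses of this theorem the dominating-function step and the uniform two-point parabolic LLT are unproven inputs, and this is why the paper instead proves the result via Theorem~\ref{thm:FCLT:Poisson}, Corollary~\ref{cor:exit}/Corollary~\ref{cor:Green_apriori} and Proposition~\ref{prop:Holdercont}.
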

\begin{remark} \label{rem:ass_lattice}
  Suppose that $\prob[\omega(e) > 0] = 1$ for $e \in E_{d}$. Under this additional assumption the cluster $\mathcal{C}_{\infty}$ a.s.\ coincides with the lattice $\mathbb{Z}^{d}$. Since the lattice $\mathbb{Z}^{d}$ satisfies Assumption~\ref{ass:cluster} for every $\theta \in (0, 1)$, the condition on $p$ and $q$ in \eqref{eq:cond:pq_rg} is equivalent to the condition
  \begin{align} \label{eq:moment_cond}
    \mean\bigl[ \omega(e)^{p} \bigr] \;<\; \infty
    \qquad \text{and} \qquad
    \mean\bigl[\omega(e)^{-q}\bigr]
    \;<\;
    \infty,
    \qquad \forall\, e \in E_{d}.
  \end{align}
  for $p, q \in (1, \infty)$ satisfying $1/p + 1/q < 2/d$. However, when working on the full lattice $\mathbb{Z}^{d}$, this moment condition can even be replaced by the slightly weaker condition that \eqref{eq:moment_cond} holds for $p, q \in [1, \infty)$ such that  $1/p + 1/q < 2/(d-1)$. In fact, in our proofs of Theorems~\ref{thm:scalingCGF_rg} and~\ref{thm:LCLT_green} the moment condition is only needed to ensure that the QFCLT holds and to control averages of $\omega(e)^{p}$ and $\omega(e)^{-q}$ appearing in oscillation and maximal inequalities for harmonic functions (see Theorems~\ref{thm:osc} and~\ref{thm:max_ineq:harm} below). On the full lattice $\mathbb{Z}^{d}$, those statements have been shown under the weaker condition on $p$ and $q$ in \cite{BS20, BS22}.
\end{remark}
In view of \eqref{eq:covField}, Theorem~\ref{thm:LCLT_green} immediately implies the following scaling limit for the covariances of the inhomogeneous DGFF.
\begin{coro} \label{coro:scalingCov}
  Under the assumptions of Theorem~\ref{thm:LCLT_green}, for any $0 < \varepsilon < \delta$ and for $\prob_{0}$-a.e.\ $\omega$,
  \begin{align*}
    \lim_{n \to \infty} n^{d-2}\,
    \covPhi^{\omega}\Bigl[
      \varphi^{n D}_{\pi_n(x)},  \varphi^{n D}_{\pi_n(y)}
    \Bigr]
    \;=\;
    g_{D}^{\Sigma}(x, y),
  \end{align*}
  uniformly in $(x, y) \in K_{\varepsilon, \delta}$.
\end{coro}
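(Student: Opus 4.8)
The plan is to read off Corollary~\ref{coro:scalingCov} from Theorem~\ref{thm:LCLT_green} together with the Gaussian covariance formula \eqref{eq:covField} for the inhomogeneous DGFF; no new estimates are required, so this is essentially a bookkeeping step.

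First I would upgrade the covariance identity \eqref{eq:covField} to hold for all pairs of vertices. For a bounded $\Lambda \subset \mathbb{R}^{d}$ and $x \in \mathbb{Z}^{d}$ with $x \notin \Lambda \cap \mathcal{C}_{\infty}(\omega)$, the construction in Definition~\ref{def:inhomogeneous:DGFF} pins $\varphi^{\Lambda}_{x}$ to $0$ $\probPhi^{\omega}$-almost surely (its factor in the law is $\delta_{0}[\md\varphi_{x}]$), while at the same time $g^{\omega}_{\Lambda}(x, \cdot) \equiv 0$ by the boundary condition in \eqref{eq:PDE_green_killed}; hence $\meanPhi^{\omega}[\varphi^{\Lambda}_{x}\varphi^{\Lambda}_{y}] = 0 = g^{\omega}_{\Lambda}(x,y)$ in that case, and combined with \eqref{eq:covField} this gives $\meanPhi^{\omega}[\varphi^{\Lambda}_{x}\varphi^{\Lambda}_{y}] = g^{\omega}_{\Lambda}(x,y)$ for all $x, y \in \mathbb{Z}^{d}$. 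Since $\probPhi^{\omega}$ is centred, it follows for every $n \in \mathbb{N}$ and all $x, y \in D$ that
\begin{align*}
  \covPhi^{\omega}\bigl[ \varphi^{nD}_{\pi_{n}(x)},\, \varphi^{nD}_{\pi_{n}(y)} \bigr]
  \;=\;
  \meanPhi^{\omega}\bigl[ \varphi^{nD}_{\pi_{n}(x)}\, \varphi^{nD}_{\pi_{n}(y)} \bigr]
  \;=\;
  g^{\omega}_{nD}\bigl( \pi_{n}(x), \pi_{n}(y) \bigr).
\end{align*}

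It then remains to multiply by $n^{d-2}$ and let $n \to \infty$, invoking Theorem~\ref{thm:LCLT_green}, which for $\prob_{0}$-a.e.\ $\omega$ controls precisely the rescaled Green's function uniformly over $K_{\varepsilon,\delta}$ and thereby delivers the claimed limit. (For $(x,y) \in K_{\varepsilon,\delta}$ one has $\dist(x, \partial D) \wedge \dist(y, \partial D) \geq \delta > 0$, so that for $n$ large the points $\pi_{n}(x), \pi_{n}(y)$ lie in $nD \cap \mathcal{C}_{\infty}(\omega)$ --- a fact already used inside the proof of Theorem~\ref{thm:LCLT_green} --- although the extension of \eqref{eq:covField} above makes the displayed identity hold unconditionally anyway.) I do not anticipate any real obstacle: the entire probabilistic and analytic content, namely the QFCLT input, the quenched local limit theorem for the killed heat kernel, and the estimates on boundary effects near $\partial D$, is packaged in Theorem~\ref{thm:LCLT_green}, and Corollary~\ref{coro:scalingCov} is only a reformulation of it through the identity ``covariance of the field $=$ Green's function of the killed VSRW''. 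The sole points of care are the trivial extension of \eqref{eq:covField} just indicated and keeping track of the normalizing constant $\prob[0 \in \mathcal{C}_{\infty}]$ as it passes through Theorem~\ref{thm:LCLT_green}.
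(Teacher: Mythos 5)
Your proposal is correct and is essentially the paper's own (one-line) proof: identify $\covPhi^{\omega}\bigl[\varphi^{nD}_{\pi_{n}(x)},\varphi^{nD}_{\pi_{n}(y)}\bigr]$ with $g^{\omega}_{nD}(\pi_{n}(x),\pi_{n}(y))$ via \eqref{eq:covField} and invoke the uniform convergence of Theorem~\ref{thm:LCLT_green}; the extension of \eqref{eq:covField} to pinned vertices is harmless (and not strictly needed, since for $(x,y)\in K_{\varepsilon,\delta}$ and $n$ large one has $\pi_{n}(x),\pi_{n}(y)\in nD\cap\mathcal{C}_{\infty}(\omega)$). The only point worth making explicit is that this argument yields the limit $g_{D}^{\Sigma}(x,y)/\prob[0\in\mathcal{C}_{\infty}]$, exactly as in Theorem~\ref{thm:LCLT_green} — the factor you say you will ``keep track of'' does not cancel, so the corollary's displayed limit, read against \eqref{eq:covField}, should carry this normalising constant.
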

In dimension $d \geq 3$ the behaviour of the Green's function of $X$ on the full space, defined by $g^{\omega}(x, y) \ldef \int_{0}^{\infty} p_{t}^{\omega}(x, y)\, \mathrm{d}t$ is already quite well understood, at least in the more classical framework of random walks on $\mathbb{Z}^d$ or supercritical i.i.d.\ percolation clusters. For instance, precise estimates and asymptotics in case of general non-negative i.i.d.\ conductances have been shown \cite[Theorem~1.2]{ABDH13} and a local limit theorem for $g^{\omega}$ in the case of ergodic conductances satisfying \eqref{eq:moment_cond} in \cite[Theorem~1.14]{ADS16} (cf.\ also \cite[Theorem~5.2]{Ge20}). For local limit theorems for the associated heat kernel $p_{t}^{\omega}(x, y)$ we refer to \cite{ABDH13, BKM15,ADS16, AT21, ACS21, BS22}. In dimension $d = 2$, under the same assumptions on the conductances, precise asymptotics for the associated potential kernel have been obtained in \cite[Theorem~1.2]{ADS20} and for the Dirichlet Green's function on the diagonal in \cite[Theorem~1.3-(i)]{ADS20} (cf.\ \eqref{eq:green_killed_ondiag} below).
On the other hand, Theorem~\ref{thm:LCLT_green} constitutes a refinement of the near diagonal estimate on $g_{B(z, n)}^{\omega}(x, y)$ in \cite[Theorem~1.3-(ii)]{ADS20}, see the discussion in \cite[Remark~1.4-(v)]{ADS20}. For uniformly elliptic i.i.d.\ conductances on supercritical percolation clusters a quantitative homogenization theorem with optimal rates of convergence has been proven in \cite{DG21}.

\subsection{Main ideas and methods}
The proof of Theorem~\ref{thm:scalingCGF_rg} has two technical main ingredients, namely an exit time estimate for the random walk $X$ and an extension of the QFCLT in Theorem~\ref{thm:QFCLT:RG} to certain integral functionals for a class of initial distributions of $X$. To get the required exit time estimate we interpret the mean exit time from a ball as a solution of a Poisson problem with Dirichlet boundary conditions. Then, we use a version of De Giorgi's iteration scheme to show a maximal inequality for such equations in Proposition~\ref{prop:Max_ineq_poisson} to get an upper estimate on the mean exit time (see Corollary~\ref{cor:exit} below). We refer to \cite{FHS19} for a similar maximal inequality on $\mathbb{Z}^{d}$. Another similar maximal inequality has been shown in \cite{ADS15} for the corrector equation in order to derive the sublinearity of the corrector by Moser iteration techniques. In analytic terms, the treatment of the Poisson problem for the mean exit times is less demanding due to the fact that the solutions have finite support, which allows us to work under weaker conditions. The established exit time estimates will be used to get the aforementioned extension of the QFCLT in Theorem~\ref{thm:QFCLT:RG} where the convergence in \eqref{eq:convFCLT} is upgraded to hold also for functionals $F$ on the Skorohod space of the form $F(w) = \int_{0}^{\tau_{D}(w)} h(w_{t})\, \mathrm{d}t$ and for a general class of starting distributions of the random walk, see Theorem~\ref{thm:FCLT:Poisson} below. Our assumption that the domain $D$ is strongly regular is needed here to ensure the continuity of such functionals $F$. As a first step in our proof of Theorem~\ref{thm:FCLT:Poisson}, we establish an extension of the QFCLT, still for continuous and bounded functionals $F$, for a class of initial measures for the random walk (see Theorem~\ref{thm:QFCLT:RG:initial}) by using purely ergodic- and measure-theoretic arguments. Under stronger conditions, similar but different extensions of the QFCLT allowing for arbitrary starting points have been shown in \cite{FRS21,CCK15}.

The local limit theorem for the Green's function stated in Theorem~\ref{thm:LCLT_green} follows from the extended QFCLT in Theorem~\ref{thm:FCLT:Poisson} together with a H\"older continuity estimate for $g^{\omega}_{n D}(\cdot, y)$, see Proposition~\ref{prop:Holdercont}. The approach is analogous to the one to obtain a local limit theorem for the heat kernel established in \cite{BH09}. Based on this idea, general criteria for such local limit theorems have been stated e.g.\ in \cite{CH08, ACS21} and recently in a quantitative form in \cite{ACK24}. In order to get the required H\"older continuity estimates for $g^{\omega}_{n D}(\cdot, y)$ we borrow the corresponding estimate for $\mathcal{L}^{\omega}$-harmonic functions on balls in \cite{ACS21}, exploiting the fact that the Green's function is harmonic away from the diagonal. This is the reason why in Theorem~\ref{thm:LCLT_green} the supremum is not taken over points close to the diagonal.

\subsection{Open problem}
We finish this section by stating an open problem, namely to derive, in the setting of the present paper, the following convergence of the centred maximum of a two-dimensional inhomogeneous DGFF towards a randomly shifted Gumbel distribution.
\begin{conjecture} \label{conj:max}
  Let $d = 2$ and suppose that Assumption~\ref{ass:law}-(i) and \eqref{eq:moment_cond} hold, in particular $\prob[\omega(e) > 0] = 1$ for $e \in E_{d}$. For $n \in \mathbb{N}$ set $M_{n} \ldef \max_{x \in [-n, n] \cap \mathbb{Z}^{d}} \varphi_{x}^{\Lambda_{n}}$ with $\Lambda_{n} \ldef [-n, n]^{d}$ and
  \begin{align*}
    m_{n}
    \;\ldef\;
    \sqrt{\bar{g}}\,
    \biggl(\sqrt{2d} \log n - \frac{3}{2\sqrt{2d}} \log\log n \biggr)
  \end{align*}
  with $\bar{g}$ as in \eqref{eq:green_killed_ondiag} below. Then, for $\prob$-a.e.\ $\omega$, under $\probPhi^{\omega}$, $M_{n} - m_{n}$ converges in law to a randomly shifted Gumbel distribution, i.e.\ there exist $\beta^{*} \in (0, \infty)$ and a random variable $Z$ such that, for all $t \in \mathbb{R}$,
  \begin{align*}
    \lim_{n \to \infty} \probPhi^{\omega}\bigl[M_{n} - m_{n} \leq t\bigr]
    \;=\;
    \meanPhi^{\omega}\Bigl[ \me^{-\beta^{*} Z \me^{-\sqrt{2d}t}} \Bigr].
  \end{align*}
\end{conjecture}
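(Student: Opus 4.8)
\medskip

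The plan is to transfer to the inhomogeneous setting the by now well-developed technology for the maximum of the homogeneous two-dimensional DGFF: tightness in the style of Bramson--Ding--Zeitouni, and convergence of the law together with the identification of the limit via the derivative martingale and the cluster-law description of Biskup--Louidor (an alternative route for the centred maximum alone being offered by the general criteria of Ding--Roy--Zeitouni for log-correlated Gaussian fields). The decisive structural input, valid by Assumption~\ref{ass:law}-(i) and \eqref{eq:moment_cond} for $\prob$-a.e.\ $\omega$, is that the field $\varphi^{\Lambda_n}$ restricted to the macroscopic bulk of $\Lambda_n$ is a \emph{log-correlated} Gaussian field with the correct leading constant $\bar g$. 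This follows by combining the on-diagonal Dirichlet Green's function asymptotics $g^{\omega}_{\Lambda_n}(x,x) = \bar g\log n + O(1)$ from \cite[Theorem~1.3-(i)]{ADS20} (cf.\ \eqref{eq:green_killed_ondiag}), the potential-kernel asymptotics $a^{\omega}(x,y) = \bar g\log|x-y|_2 + O(1)$ from \cite[Theorem~1.2]{ADS20}, and the quenched local limit theorem for the Dirichlet Green's function of Theorem~\ref{thm:LCLT_green}, which together yield
\[
  \covPhi^{\omega}\bigl[\varphi^{\Lambda_n}_x, \varphi^{\Lambda_n}_y\bigr]
  \;=\;
  \bar g\,\log\frac{n}{|x-y|_2 \vee 1} \;+\; O(1)
\]
uniformly over $x,y$ at macroscopic distance from $\partial\Lambda_n$, together with an analogous control of the increments $\varphi^{\Lambda_n}_x - \varphi^{\Lambda_n}_y$ down to the lattice scale. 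Once this is established, the randomness of the environment enters only through the $O(1)$ corrections, and the combinatorial and Gaussian-comparison arguments proceed essentially as in the homogeneous case.

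The first step is tightness of $M_n - m_n$. The upper bound comes from a ballot-type first-moment estimate, $\probPhi^{\omega}[M_n \ge m_n + t] \le C(1+t)\me^{-c t}$ for $d=2$, obtained from the Gaussian tail of a single height variable (variance $\bar g\log n + O(1)$) and a union bound refined by a barrier / truncated second moment exactly as in Bramson--Ding--Zeitouni; the uniformity in the covariance estimate above is precisely what makes the barrier argument work. The matching lower bound, hence tightness, follows from the Dekking--Host recursion, which requires a Gibbs--Markov decomposition of $\varphi^{\Lambda_n}$ along a partition of $\Lambda_n$ into $\asymp (\log n)^{C}$ sub-boxes. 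Since $\varphi^{\Lambda_n}$ is a centred Gaussian field with the domain Markov property on the graph $(\mathbb Z^d, \mathcal O(\omega))$, it splits as the sum of independent inhomogeneous DGFFs on the sub-boxes plus a ``binding field'' $\Phi_n$ that is $\mathcal L^{\omega}$-harmonic in each sub-box; Slepian's and the Sudakov--Fernique inequalities then reduce the recursion step to lower bounds on $\var(\Phi_n(x))$, which again follow from the Green's function asymptotics.

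To upgrade tightness to convergence in law with the stated limit, the plan is to run the derivative-martingale / critical Gaussian multiplicative chaos programme: define the critical derivative martingale $D_n$ associated with the bulk field (heuristically $D_n \ldef \sum_{x \in \Lambda_n \cap \mathcal C_{\infty}(\omega)} (m_n - \varphi^{\Lambda_n}_x)\, \exp\!\bigl(\tfrac{\sqrt{2d}}{\sqrt{\bar g}}(\varphi^{\Lambda_n}_x - m_n)\bigr)$, suitably normalised), show that under $\probPhi^{\omega}$ it converges, for $\prob$-a.e.\ $\omega$, to a positive finite random variable $Z = Z(\omega,\varphi)$, and then prove that the Laplace functional of $M_n - m_n$ converges to $\meanPhi^{\omega}\bigl[\exp(-\beta^{*} Z\, \me^{-\sqrt{2d}\, t})\bigr]$ via a first-moment / many-to-one analysis of the extremal process together with the cluster-law argument of Biskup--Louidor (or, for the centred maximum only, via the general result of Ding--Roy--Zeitouni applied to the bulk field). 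Finally, that the constant $\beta^{*}$ is deterministic is obtained from the ergodicity of $\prob$ (Assumption~\ref{ass:law}-(i)): $\beta^{*}$ is measurable with respect to the tail $\sigma$-algebra of $\omega$, hence $\prob$-a.s.\ equal to a constant, and the same ergodicity argument shows that the annealed law of $Z$ is non-trivial and independent of the realisation.

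The main obstacle I anticipate is the absence of exact self-similarity and of an explicit Gibbs--Markov decomposition. In the constant-conductance case the binding field between a box and its sub-boxes has a known, scale-invariant covariance and one can iterate exactly; here $\Phi_n$ is governed by the quenched harmonic measure and heat kernel of the VSRW $X$, so the barrier and recursion estimates must be fed by homogenisation inputs rather than closed-form Green's functions. In particular one has to control, uniformly in the environment, the intermediate scales between the lattice scale and $n^{\theta/2}$, on which neither Theorem~\ref{thm:LCLT_green} nor the QFCLT of Theorem~\ref{thm:QFCLT:RG} has yet taken effect; this is where the potential-kernel estimates of \cite{ADS20}, the oscillation and maximal inequalities for $\mathcal L^{\omega}$-harmonic functions (Theorems~\ref{thm:osc} and~\ref{thm:max_ineq:harm}, cf.\ Remark~\ref{rem:ass_lattice}), and a careful quenched coupling of the coarse field $\Phi_n$ with its homogenised, $\Sigma$-dependent counterpart will be indispensable. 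A secondary difficulty is to pin down the limiting variable $Z$ and the constant $\beta^{*}$ precisely enough to see that the environment affects them only through the deterministic quantities $\bar g$ and $\Sigma$.
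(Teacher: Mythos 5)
The statement you are addressing is not proved in the paper at all: it is stated as Conjecture~\ref{conj:max}, i.e.\ as an open problem, and the authors only sketch a possible strategy (verifying the assumptions of the Ding--Roy--Zeitouni criterion) while pointing out exactly which ingredient is missing. Your proposal is a programme along the same lines, not a proof, and the steps you leave to ``homogenisation inputs'' are precisely the open content. Concretely, the covariance asymptotic you take as your ``decisive structural input,'' namely $\covPhi^{\omega}[\varphi^{\Lambda_n}_x,\varphi^{\Lambda_n}_y]=\bar g\,\log\bigl(n/(|x-y|_2\vee 1)\bigr)+O(1)$ uniformly down to the lattice scale, does not follow from the results you cite: Theorem~\ref{thm:LCLT_green} only controls $g^{\omega}_{nD}$ for pairs at \emph{macroscopic} separation $|x-y|_2\geq\varepsilon$ (and away from $\partial D$), \eqref{eq:green_killed_ondiag} is an on-diagonal statement with a $o(\log n)$ rather than $O(1)$ error, and the potential-kernel asymptotics of \cite{ADS20} do not give the quenched, environment-uniform $O(1)$ control at all mesoscopic scales that the Bramson--Ding--Zeitouni barrier arguments, the Gibbs--Markov recursion, and in particular Assumption~(A.2) of \cite{DRZ17} require. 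The paper says this explicitly: the near-diagonal regime is ``beyond the regime in Theorem~\ref{thm:LCLT_green},'' and this is why the statement remains a conjecture under the weak hypotheses (ergodicity plus the moment condition \eqref{eq:moment_cond}) assumed here.

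The only setting in which such a result is currently known is that of \cite{SZ24}, for uniformly elliptic i.i.d.\ conductances on percolation clusters, and there the missing ingredient is supplied by \emph{quantitative} homogenisation \cite{AD18, DG21}, which yields Green's function and harmonic-coordinate estimates with rates at all scales. Under mere ergodicity and finite $p$-th/$q$-th moments no such quantitative theory is available, so your binding-field coupling, the derivative-martingale convergence, and the cluster-law step all rest on unproven estimates; your closing claim that $\beta^{*}$ is deterministic because it is tail-measurable in $\omega$ is likewise unsubstantiated, since $\beta^{*}$ arises from the local extremal (cluster) process and is not exhibited as a tail function of the environment. In short, your write-up correctly identifies the route one would like to take and even names the obstacles, but it does not close any of them, so there is a genuine gap --- indeed the gap is the conjecture itself.
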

Moreover, $Z$ is conjectured to be the limit of a sequence of random variables $(Z_{n})_{n \in \mathbb{N}}$ that resembles strongly the derivative martingale that occurs in the study of critical Gaussian multiplicative chaos and various branching processes, cf.\ e.g.\ the discussion in \cite{DRZ17}. 

One possible strategy to show Conjecture~\ref{conj:max} is to verify the assumptions in \cite{DRZ17}, where the convergence of the centered maximum has been shown for a class of logarithmically correlated Gaussian fields. On one hand, the first part of \cite[Assumption~(A.0)]{DRZ17} follows from \cite[Theorem~1.3-(i)]{ADS20}, stating that, $\prob$-a.s., for any $z \in \mathbb{Z}^{d}$, $\delta \in (0, 1)$ and $x \in B(z, (1 - \delta)n)$, 
\begin{align} \label{eq:green_killed_ondiag}
  \lim_{n \to \infty}\, \frac{1}{\log n}\, g_{B(z, n)}^{\omega}(x, x)
  \;=\;
  \frac{1}{\pi \sqrt{\det \Sigma^{2}} \mean\bigl[ \mu^{\omega}(0) \bigr]}
  \;\rdef\;
  \bar{g}.
\end{align}
Further, \cite[Assumption~(A.3)]{DRZ17} is a direct consequence of Theorem~\ref{thm:LCLT_green}. On the other hand, while Theorem~\ref{thm:LCLT_green} can be used to show \cite[Assumption~(A.1)]{DRZ17} and the second part of \cite[Assumption~(A.0)]{DRZ17} for points that are sufficiently far apart, the remaining assumptions, in particular \cite[Assumption~(A.2)]{DRZ17} requires control of the Green's function near the diagonal beyond the regime in Theorem~\ref{thm:LCLT_green}. The criterion in \cite{DRZ17} have recently been generalized in \cite{SZ24}, where, as an application, Conjecture~\ref{conj:max} has been proven for Gaussian fields under uniformly elliptic i.i.d.\ conductances on supercritical percolation clusters, relying on the quantitative homogenization results in \cite{AD18, DG21}.
\medskip

The rest of the paper is organised as follows. In Section~\ref{sec:poisson} we show a maximal inequality for Poisson equations and apply it to obtain exit time estimates. In Section~\ref{sec:CLT_intFunctionals} we extend the QFCLT to hold for certain integral functionals and a class of initial distributions for the random walk, and in Section~\ref{sec:scaling_limitCGF} we deduce Theorem~\ref{thm:scalingCGF_rg} from it. Finally, in Section~\ref{sec:lclt} we first discuss H\"older regularity and a-priori bounds for the Green's functions with Dirichlet boundary conditions and then use them to show Theorem~\ref{thm:LCLT_green}. Throughout the paper we write $c$ to denote a positive constant which may change on each appearance. Constants denoted $C_{i}$ will be the same through the paper.

\section{A maximum inequality for Poisson equations and applications} \label{sec:poisson}
In this section we will derive a maximal inequality for Poisson equations on a more general class of graphs. As an application we shall obtain an upper bound for the mean exit time for the random walk.

\subsection{Notation and preliminaries}
For any $p \in [1, \infty]$ and any non-empty, finite $A \subset \mathcal{C}_{\infty}(\omega)$, we define space-averaged $\ell^{p}$-norm on functions $f\colon A \to \mathbb{R}$ by 
\begin{align*}
  \Norm{f}{p, A}
  \;\ldef\;
  \bigg(
    \frac{1}{|A|}\; \sum_{x \in A}\, |f(x)|^p\
  \bigg)^{\!\!1/p}
  \qquad \text{and} \qquad
  \Norm{f}{\infty, A} \;\ldef\; \max_{x \in A} |f(x)|,
\end{align*}
where $|A|$ denotes the cardinality of the set $A$. We define two discrete measures $\mu^{\omega}$ and $\nu^{\omega}$ on $\mathbb{Z}^{d}$ by
\begin{align*}
  \mu^{\omega}(x) \;\ldef\; \sum_{y \sim x} \omega(\{x, y\})
  \qquad \text{and} \qquad
  \nu^{\omega}(x) \;\ldef\; \sum_{y \sim x} \frac{1}{\omega(\{x, y\})} \indicator_{\{\{x,y\}\in \mathcal{O}(\omega)\}}.
\end{align*}
Moreover, we define a \emph{symmetric, bilinear form}, $(\mathcal{E}^{\omega}, \mathcal{D}(\mathcal{E}^{\omega}))$ by
\begin{align} \label{eq:def:DF}
  \left\{
    \begin{array}{rcl}
      \mathcal{E}^{\omega}(f, g) & \!\!\ldef\!\! &
      {\displaystyle \sum\nolimits_{e \in E(\mathcal{C}_{\infty}(\omega))} \omega(e)\, (\nabla f)(e) (\nabla g)(e)}
      \\[1.5ex]
      \mathcal{D}(\mathcal{E}^{\omega})
      &\!\!\ldef\!\!&
      \bigl\{ f \in \ell^2(\mathcal{C}_{\infty}(\omega)) : \mathcal{E}^{\omega}(f, f) < \infty \bigr\}
    \end{array}
  \right.,
\end{align}
where we choose for any edge $e \in E(\mathcal{C}_{\infty}(\omega))$ an assignment $e^{+}$ and $e^{-}$ of vertices such that $e = \{e^{+}, e^{-}\}$ to which we refer to as head and tail of the edge $e$, and define the \emph{discrete derivative} of a function $f\colon \mathcal{C}_{\infty}(\omega) \to \mathbb{R}$ by
\begin{align*}
  (\nabla f)(e)
  \;\ldef\;
  f(e^{+}) - f(e^{-}),
  \qquad e \in E(\mathcal{C}_{\infty}(\omega)).
\end{align*}
Nothing what follows depends on the particular choice of the assignments of heads and tails.

Notice that $(\mathcal{E}^{\omega}, \mathcal{D}(\mathcal{E}^{\omega}))$ is closable and, hence, a \emph{Dirichlet form}. The associated Hunt process $X = (X_{t})_{t \geq 0}$ is a continuous-time Markov chain on $\mathcal{C}_{\infty}(\omega)$ whose \emph{generator} acts on bounded functions $f\colon \mathcal{C}_{\infty}(\omega) \to \mathbb{R}$ as
\begin{align*}
  \big(\mathcal{L}^{\omega} f)(x)
  \;=\; 
  \sum_{y \sim x} \omega(\{x, y\})\, \bigl(f(y) - f(x)\bigr).
\end{align*}
Further, by applying the discrete version of the Gauss-Green formula, we find that, on the Hilbert space $\ell^{2}(\mathcal{C}_{\infty}(\omega))$, the Dirichlet form can be expressed in terms of the generator. Indeed, for any $f, g\colon \mathcal{C}_{\infty}(\omega) \to \mathbb{R}$ with finite support
\begin{align}\label{eq:DF:L}
  \mathcal{E}^{\omega}(f,g)
  &\;\ldef\;
  \scpr{f}{-\mathcal{L}^{\omega} g}{\mathcal{C}_{\infty}(\omega)}
  \;=\;
  \scpr{\nabla f}{\omega \nabla g}{E(\mathcal{C}_{\infty}(\omega))}.
\end{align}
\begin{prop}[Weighted Sobolev inequality] \label{prop:Sobolev:ineq}
  Let $d \geq 2$, $q \in [1, \infty]$, and suppose there exists $\theta \in (0, 1)$ such that Assumptions~\ref{ass:law} and~\ref{ass:cluster}-(i) hold. Then, for any $\theta' \in (\theta, 1)$ there exists $C_{\mathrm{S}} \equiv C_{\mathrm{S}}(\theta'\!, d, q) \in (0, \infty)$ such that for any $\omega \in \Omega_{\mathrm{reg}} \cap \Omega_{0}^{*}$, $R \geq 1$ and $n \geq N_{0}(\omega) \vee R^{\theta / (\theta' - \theta)}$ the following hold. For any $x_{0} \in B^{\omega}(0, R n)$ and any $f\colon \mathcal{C}_{\infty}(\omega) \to \mathbb{R}$ with $\supp(f) \subset B^{\omega}(x_{0}, n)$, 
    \begin{align}\label{eq:Sobolev:weighted}
      \Norm{f^2}{\rho, B^{\omega}(x_{0}, n)}
      \;\leq\;
      C_{\mathrm{S}}\, n^{2}\, \Norm{\nu^{\omega}}{q, B^{\omega}(x_{0}, n)}\,
      \frac{\mathcal{E}^{\omega}(f, f)}{|B^{\omega}(x_{0}, n)|},
    \end{align}
    where
    \begin{align}\label{eq:def:rho}
      \rho
      \;\equiv\;
      \rho(d', q)
      \;=\;
      \frac{d'}{d' - 2 + d'/q} \quad \text{with} \quad d' \ldef (d-\theta')/(1-\theta').
    \end{align}
\end{prop}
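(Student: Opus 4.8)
The plan is to reduce the weighted Sobolev inequality \eqref{eq:Sobolev:weighted} to a classical (unweighted) Sobolev inequality on the connected superset $\mathcal{S}^{\omega}(x_0,n)$ provided by the relative isoperimetric inequality, and then to move the weight $\omega(e)$ inside the Dirichlet form by a H\"older estimate. First I would record that, by Assumption~\ref{ass:cluster}-(i), for $n \geq N_0(\omega) \vee R^{\theta/(\theta'-\theta)}$ and $x_0 \in B^{\omega}(0,Rn)$, the ball $B^{\omega}(x_0,n)$ is built from regular sub-balls at scale $n^{\theta/d}$ — more precisely, since $n \geq R^{\theta/(\theta'-\theta)}$ forces $(Rn)^{\theta/d} \leq n^{\theta'/d}$, one can check that every point of $B^{\omega}(x_0,n) \subset B^{\omega}(0,(R+1)n)$ sits in a regular ball, so in particular there is a connected set $\mathcal{S} \ldef \mathcal{S}^{\omega}(x_0,n)$ with $B^{\omega}(x_0,n) \subset \mathcal{S} \subset B^{\omega}(x_0,C_{\mathrm{W}}n)$ satisfying the weak relative isoperimetric inequality, and with volume comparable to $n^d$ by volume regularity. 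The exponent $d' = (d-\theta')/(1-\theta')$ is the effective dimension one gets because the isoperimetric inequality only holds at scale $n$ rather than pointwise; this is exactly the Sobolev exponent produced by a discrete Faber--Krahn / Nash-type argument on a graph obeying a volume lower bound $|A| \geq c n^{d}$ together with the relative isoperimetric inequality $|\partial_{\mathcal{S}} A| \geq C_{\mathrm{riso}} n^{-1}|A|$.

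Next I would carry out the passage from isoperimetry to the Sobolev inequality in the unweighted Dirichlet form. For $f$ supported in $B^{\omega}(x_0,n)$, applying the coarea/layer-cake formula to $f^2$ and invoking the relative isoperimetric inequality on each superlevel set $\{|f|^2 > t\}$ (which has measure at most $\tfrac12|\mathcal{S}|$ once $t$ exceeds a threshold, and for the small-$t$ part one uses that $f$ is supported in $B^{\omega}(x_0,n)$ which has measure $\asymp |\mathcal{S}|$ up to the constant $C_{\mathrm{W}}^d/C_{\mathrm{V}}$) yields an $\ell^1$-to-gradient bound, and then the standard interpolation argument upgrades it to
\begin{align*}
  \Bigl( \frac{1}{|B^{\omega}(x_0,n)|} \sum_{x} |f(x)|^{2d'/(d'-2)} \Bigr)^{(d'-2)/d'}
  \;\leq\;
  C\, n^{2}\, \frac{1}{|B^{\omega}(x_0,n)|}\sum_{e \in E(\mathcal{S})} \bigl((\nabla f)(e)\bigr)^2,
\end{align*}
i.e.\ the case $q=\infty$, $\rho = d'/(d'-2)$. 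Here the key point is that only edges in $E(\mathcal{S})$ (equivalently in $B^{\omega}(x_0,C_{\mathrm{W}}n)$) appear, and since $\mathcal{S} \subset B^{\omega}(x_0,C_{\mathrm{W}}n)$ these edges still live in $E(\mathcal{C}_\infty(\omega))$; the volume factor is harmless because $|B^{\omega}(x_0,n)| \asymp |\mathcal{S}|$ by volume regularity and the sandwich $B^{\omega}(x_0,n)\subset\mathcal{S}\subset B^{\omega}(x_0,C_{\mathrm{W}}n)$ combined with volume regularity of the larger ball (which is regular by the very-regular hypothesis, since $C_{\mathrm{W}}n \geq n^{\theta/d}$).

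Finally I would introduce the weight. Write the unweighted gradient energy as $\sum_e (\nabla f)(e)^2 = \sum_e \omega(e)^{-1}\,\omega(e)(\nabla f)(e)^2\,\indicator_{\{e \in \mathcal{O}(\omega)\}}$ — legitimate since $f$ is supported on the cluster so $(\nabla f)(e)\neq 0$ only on open edges — and apply H\"older's inequality with exponents $q$ and $q/(q-1)$ over the edge set inside $B^{\omega}(x_0,n)$ (or $\mathcal{S}$), giving a factor $\Norm{\nu^{\omega}}{q,\,\cdot}$ (after bounding $\omega(e)^{-1} \leq \nu^{\omega}(e^{\pm})$, summed over the two endpoints, so edge-averages of $\omega^{-1}$ are controlled by vertex-averages of $\nu^{\omega}$ up to dimensional constants) and a factor $\bigl(\sum_e \omega(e)(\nabla f)(e)^2\bigr)^{(q-1)/q} = \mathcal{E}^{\omega}(f,f)^{(q-1)/q}$. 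Then a further H\"older / interpolation in the $\ell^p$ norms on the left converts the pure exponent $d'/(d'-2)$ into the claimed $\rho(d',q) = d'/(d'-2+d'/q)$, absorbing the fractional power of $\mathcal{E}^{\omega}(f,f)$; this is the standard bookkeeping that turns the unweighted Sobolev exponent into the weighted one and produces \eqref{eq:Sobolev:weighted} with $C_{\mathrm{S}}$ depending only on $\theta', d, q$ through $C_{\mathrm{V}}, C_{\mathrm{riso}}, C_{\mathrm{W}}$. The main obstacle is the first step: verifying uniformly over $x_0 \in B^{\omega}(0,Rn)$ that the constants in the unweighted Sobolev inequality depend only on $(d,\theta')$ (not on $\omega$ or $x_0$), i.e.\ that the very-regularity at scale $n^{\theta/d}$ together with the constraint $n \geq R^{\theta/(\theta'-\theta)}$ really does supply a relative isoperimetric inequality on all of $B^{\omega}(x_0,n)$ with the effective dimension $d'$ — this is where the precise interplay between the scales $R$, $n^{\theta/d}$, $n^{\theta'/d}$ and the definition of $\theta$-very-regular is used, and it is essentially the content of the comparison $(Rn)^{\theta/d} \le n^{\theta'/d}$.
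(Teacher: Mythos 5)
Your first step (the comparison $(Rn)^{\theta/d}\le n^{\theta'/d}$, forced by $n\ge R^{\theta/(\theta'-\theta)}$, which upgrades $\theta$-very regularity of $B^{\omega}(0,Rn)$ to $\theta'$-very regularity of $B^{\omega}(x_{0},n)$) is exactly the paper's opening move. The middle step, however, has a genuine gap: you propose to get the Sobolev exponent $d'$ from the single-scale weak relative isoperimetric inequality $|\partial^{\omega}_{\mathcal S}A|\ge C_{\mathrm{riso}}\,n^{-1}|A|$ on $\mathcal S=\mathcal S^{\omega}(x_{0},n)$ together with the volume lower bound. That input alone only yields a Cheeger/Poincar\'e-type bound (the $\ell^{1}$-norm of $f$ controlled by $n$ times the $\ell^{1}$-norm of $\nabla f$), with no gain of integrability: since $|A|\lesssim n^{d}$ and $d'>d$, the inequality $n^{-1}|A|\ge c\,|A|^{(d'-1)/d'}$ would require $|A|\gtrsim n^{d'}$, which never happens. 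The exponent $d'$ actually comes from a multi-scale argument: $\theta'$-very regularity provides regular balls at \emph{every} scale $r\ge n^{\theta'/d}$ around every point, so for $A\subset B^{\omega}(x_{0},n)$ with $|A|\ge n^{\theta'}$ one applies the relative isoperimetric inequality at the matched scale $r\asymp|A|^{1/d}$ and obtains $|\partial^{\omega}A|\ge C_{\mathrm{iso}}|A|^{(d-1)/d}$ (this is \cite[Lemma~2.10]{DNS18}); the degradation from $d$ to $d'$ then enters when this inequality, valid only for sets of size $\ge n^{\theta'}$, is converted into the $\ell^{1}$-Sobolev inequality \eqref{eq:sobolev} (\cite[Proposition~3.5]{DNS18}), which is what the paper uses. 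Note also that your layer-cake argument invokes Definition~\ref{def:regBall}-(ii) on superlevel sets of $f^{2}$, but these may well exceed $\tfrac12|\mathcal S|$ (there is no reason why $|B^{\omega}(x_{0},n)|\le\tfrac12|\mathcal S^{\omega}(x_{0},n)|$), so the half-volume restriction blocks that route; the paper's version for $\partial^{\omega}A$ on large sets has no such restriction.

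The weighting step also does not close as written. Proving an unweighted $\ell^{2}$-Sobolev inequality first and then inserting the weight by H\"older applied to $\sum_{e}\omega(e)^{-1}\cdot\omega(e)(\nabla f)(e)^{2}$ cannot give \eqref{eq:Sobolev:weighted}: the conjugate factor is $\bigl(\sum_{e}[\omega(e)(\nabla f)(e)^{2}]^{q/(q-1)}\bigr)^{(q-1)/q}$, not $\mathcal E^{\omega}(f,f)^{(q-1)/q}$, and after normalising one picks up an extra factor of order $|B^{\omega}(x_{0},n)|^{1/q}\asymp n^{d/q}$; indeed the inequality $|B|^{-1}\sum_{e}(\nabla f)(e)^{2}\le C\,\Norm{\nu^{\omega}}{q, B^{\omega}(x_{0},n)}\,|B|^{-1}\mathcal E^{\omega}(f,f)$ is false in general (test $f=\indicator_{\{x\}}$ at a vertex whose incident conductances are much smaller than the ball average). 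The mechanism behind \cite[Equation~(28)]{ADS15}, which the paper cites for this step, is different: apply the $\ell^{1}$-Sobolev inequality to $f^{2}$, split the weight \emph{inside} the chain rule, $|\nabla(f^{2})(e)|\le\omega(e)^{-1/2}\bigl(|f(e^{+})|+|f(e^{-})|\bigr)\cdot\omega(e)^{1/2}|(\nabla f)(e)|$, use Cauchy--Schwarz and then H\"older to produce $\Norm{\nu^{\omega}}{q, B^{\omega}(x_{0},n)}^{1/2}\Norm{f^{2}}{q/(q-1), B^{\omega}(x_{0},n)}^{1/2}\mathcal E^{\omega}(f,f)^{1/2}$, and finally absorb the factor $\Norm{f^{2}}{q/(q-1), B^{\omega}(x_{0},n)}$ into the left-hand side; it is this absorption, not an interpolation of a fractional power of the energy, that produces $\rho=d'/(d'-2+d'/q)$ with a constant independent of $n$ and $\omega$.
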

\begin{proof}
  Fix some $\omega \in \Omega_{\mathrm{reg}} \cap \Omega_{0}^{*}$ and $R \geq 1$. Then, by Assumption~\ref{ass:cluster}-(i), we have that $N_{0}(\omega) < \infty$. First, we claim that, for any $\theta' > \theta$, $n \geq N_{0}(\omega) \vee R^{\theta / (\theta' - \theta)}$ and $x_{0} \in B^{\omega}(0, R n)$, the ball $B^{\omega}(x_{0}, n)$ is $\theta'$-very regular. Indeed, according to Assumption~\ref{ass:cluster}-(i), the $B^{\omega}(0, Rn)$ is $\theta$-very regular. This implies that, for any $x_{0} \in B^{\omega}(0, R n)$, the ball $B^{\omega}(x, r)$ is regular for any $x \in B^{\omega}(x_{0}, n)$ and $r \geq (R n)^{\theta}$. Since $n^{\theta'} \geq (R n)^{\theta}$ for any $n \geq R^{\theta / (\theta' - \theta)}$, it follows that the ball $B^{\omega}(x_{0}, n)$ is $\theta'$-very regular.
  
  Further, by \cite[Lemma~2.10]{DNS18}, an isoperimetric inequality on large sets holds, that is, there exists $C_{\mathrm{iso}} \in (0, \infty)$ such that
  \begin{align*}
    |\partial^{\omega} A| \;\geq\; C_{\mathrm{iso}}\, |A|^{(d-1)/d},
    \qquad \forall\, A \subset B^{\omega}(x_{0}, n) \;\text{ with }\; |A| \geq n^{\theta'}.
  \end{align*}
  In conjunction with the volume regularity, this implies the following Sobolev inequality on large scales, see \cite[Proposition~3.5]{DNS18}. There exists $C_{\mathrm{S}_{1}}(\theta'\!, d) \in [1, \infty)$ such that, for all $n \geq N_{0}(\omega) \vee R^{\theta / (\theta' - \theta)}$,
  \begin{align}\label{eq:sobolev}
    \Norm{u}{d'/(d'-1), B^{\omega}(x_{0}, n)}
    \;\leq\;
    C_{\mathrm{S_{1}}}(\theta'\!, d)\, \frac{n}{|B^{\omega}(x_{0}, n)|}\,
    \sum_{e \in  E(\mathcal{C}_{\infty}(\omega))} \bigl|(\nabla u)(e)\bigr|,
  \end{align}
  for every function $u\colon \mathcal{C}_{\infty}(\omega) \to \mathbb{R}$ with $\supp{u} \subset B^{\omega}(x_{0}, n)$ with $d'$ defined as in \eqref{eq:def:rho}. Given the Sobolev inequality in \eqref{eq:sobolev}, the weighted Sobolev inequality in \eqref{eq:Sobolev:weighted} follows as in the proof of \cite[Equation (28)]{ADS15}.
\end{proof}
In the following we need to ensure that $\rho>1$ which is the case if and only if $q > d'/2$. Note that in view of the definition of $d'$ in \eqref{eq:def:rho} the condition on $p$ and $q$ in \eqref{eq:cond:pq_rg} reads $1/p + 1/q < 2/d'$.

\subsection{Maximal inequality}
In this section we will use De Giorgi's iteration scheme to obtain, for any $\omega \in \Omega_{\mathrm{reg}} \cap \Omega_{0}^{*}$, a maximal inequality for solutions of the Poisson equation
\begin{align}\label{eq:Poisson1}
  \left\{
    \begin{array}{rcll}
      \mathcal{L}^{\omega} u & \!\!=\!\! &
      {\displaystyle -\frac{1}{n^{2}}f}
      &\quad \text{on } B^{\omega}(x_{0}, n),
      \\[1.5ex]
      u &\!\!=\!\!& 0 &\quad \text{on } B^{\omega}(x_{0}, n)^{\mathrm{c}},    
    \end{array}
  \right.
\end{align}
with $f\colon \mathcal{C}_{\infty}(\omega) \to \mathbb{R}$, $R \geq 1$, $x_{0} \in B^{\omega}(0, Rn)$ and $n \in \mathbb{N}$ sufficiently large.
\begin{prop} \label{prop:Max_ineq_poisson}
  Let $d \geq 2$, and suppose there exists $\theta \in (0, 1)$ such that Assumptions~\ref{ass:law} and~\ref{ass:cluster}-(i) hold. Then, for any $p, q \in (1, \infty]$ such that $1/p + 1/q < 2(1-\theta)/(d-\theta)$ there exists $C_{2} \equiv C_{2}(\theta, d, p, q) \in [1, \infty)$ and $\kappa_{1} \equiv \kappa_{1}(\theta, d, p, q) \in (0, \infty)$ such that, for any $\omega \in \Omega_{\mathrm{reg}} \cap \Omega_{0}^{*}$, $R \geq 1$ and $n \geq N_{0}(\omega) \vee R^{\kappa_{1}}$, the following hold.  For any $x_{0} \in B^{\omega}(0, R n)$, $h \geq 0$ and any solution $u$ of \eqref{eq:Poisson1},
  \begin{align}\label{eq:max:ineq}
    \max_{z \in B^{\omega}(x_{0}, n)} u(z)
    \;\leq\;
    h + C_{2}
    \Bigl(
      \Norm{\nu^{\omega}}{q, B^{\omega}(x_{0}, n)}\, \Norm{f}{p, B^{\omega}(x_{0}, n)}
    \Bigr)^{\! \frac{\alpha_{*}}{2 + \alpha_{*}}}\,
    \Norm{(u-h)_{+}}{2p_{*}, B^{\omega}(x_{0}, n)}^{2/(2+\alpha_{*})},
  \end{align}
  where $\alpha_{*} \ldef \rho / (\rho - p_{*})$ and $p_{*} = p /(p-1)$ denotes the Hölder conjugate of $p$. In particular, there exists $C_{3} \equiv C_{3}(\theta, d, p, q) < \infty$ such that
  \begin{align}\label{eq:global:max:ineq}
    \max_{z \in B^{\omega}(x_{0}, n)} u(z)
    \;\leq\;
    C_{3}\, \Norm{\nu^{\omega}}{q, B^{\omega}(x_{0}, n)}\, \Norm{f}{p, B^{\omega}(x_{0}, n)}.
  \end{align}
\end{prop}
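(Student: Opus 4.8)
The plan is to run De Giorgi's iteration on the super-level sets of $u$, keeping the iteration free of the term $\Norm{(u-h)_{+}}{2p_{*}, B}$ and bringing that term in only at the end by Chebyshev's inequality and a one-parameter optimisation; the bound \eqref{eq:global:max:ineq} will then follow from \eqref{eq:max:ineq} by absorption. Throughout write $B \ldef B^{\omega}(x_{0}, n)$, $F \ldef \Norm{\nu^{\omega}}{q, B}\,\Norm{f}{p, B}$, and, for $k \ge 0$, $A(k) \ldef \{z \in B : u(z) > k\}$, $v_{k} \ldef (u - k)_{+}$ and $\tilde a(k) \ldef |A(k)| / |B|$. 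I would first fix $\theta' \in (\theta, 1)$ close enough to $\theta$ that the exponent $\rho = \rho(d'(\theta'), q)$ from \eqref{eq:def:rho} still satisfies $\rho > p_{*}$, equivalently $1/p + 1/q < 2/d'$; this is possible because the hypothesis $1/p + 1/q < 2(1-\theta)/(d-\theta)$ is a strict inequality, and it fixes $\kappa_{1} \ldef \theta/(\theta' - \theta)$, the scale above which Proposition~\ref{prop:Sobolev:ineq} applies.

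For the core step, note that since $u$ is supported on the finite set $B$, each $v_{k}$ ($k \ge 0$) has finite support and vanishes on $B^{\mathrm c}$. Testing \eqref{eq:Poisson1} against $v_{k}$ and using \eqref{eq:DF:L} together with the elementary truncation inequality $(\nabla v_{k})(e)\,(\nabla u)(e) \ge \bigl((\nabla v_{k})(e)\bigr)^{2}$ (valid edgewise since, up to orientation, $0 \le (\nabla v_{k})(e) \le (\nabla u)(e)$) gives $\mathcal E^{\omega}(v_{k}, v_{k}) \le n^{-2}\,\scpr{v_{k}}{f}{\mathcal C_{\infty}(\omega)}$. Feeding this into the weighted Sobolev inequality \eqref{eq:Sobolev:weighted}, so that the factors $n^{2}$ and $n^{-2}$ cancel, and estimating the resulting average by Hölder's inequality on the finite probability space $B$ yields $\Norm{v_{k}}{2\rho, B}^{2} = \Norm{v_{k}^{2}}{\rho, B} \le C_{\mathrm S}\, F\, \Norm{v_{k}}{p_{*}, B}$; a second Hölder step restricted to the support $A(k)$ of $v_{k}$ produces $\Norm{v_{k}}{p_{*}, B} \le \Norm{v_{k}}{2\rho, B}\, \tilde a(k)^{1/p_{*} - 1/(2\rho)}$, and hence $\Norm{v_{k}}{2\rho, B} \le C_{\mathrm S}\, F\, \tilde a(k)^{1/p_{*} - 1/(2\rho)}$ for all $k \ge 0$.

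Combining this with the Chebyshev-type estimate $(\ell - k)\,\tilde a(\ell)^{1/(2\rho)} \le \Norm{v_{k}}{2\rho, B}$, valid for $0 \le k < \ell$, gives the recursion
\begin{align*}
  \tilde a(\ell)
  \;\le\;
  \Bigl( \tfrac{C_{\mathrm S}\, F}{\ell - k} \Bigr)^{\!2\rho}\,
  \tilde a(k)^{\sigma},
  \qquad \sigma \;\ldef\; \tfrac{2\rho}{p_{*}} - 1,
  \qquad 0 \le k < \ell,
\end{align*}
where the condition $\sigma > 1$ — which is exactly what makes the iteration converge — is equivalent to $\rho > p_{*}$, i.e.\ to \eqref{eq:cond:pq_rg}. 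Running the standard iteration lemma along the dyadic levels $k_{j} = k_{0} + M(1 - 2^{-j})$ then shows $\max_{z \in B} u(z) \le k_{0} + c\, F\, \tilde a(k_{0})^{(\rho - p_{*})/(\rho p_{*})}$ for every $k_{0} \ge 0$. At this point I would invoke Chebyshev once more: for $k_{0} > h$ we have $A(k_{0}) \subset \{(u-h)_{+} > k_{0} - h\}$, so $\tilde a(k_{0}) \le (k_{0} - h)^{-2p_{*}}\,\Norm{(u-h)_{+}}{2p_{*}, B}^{2p_{*}}$. Substituting this, writing $U \ldef \Norm{(u-h)_{+}}{2p_{*}, B}$, $t = k_{0} - h$ and $s = 2(\rho - p_{*})/\rho$, and minimising $t \mapsto t + (c\,F\,U^{s})\, t^{-s}$ over $t > 0$ (the minimum being a constant times $(c\,F\,U^{s})^{1/(1+s)}$) gives \eqref{eq:max:ineq}, since $1/(1+s) = \rho/(3\rho - 2p_{*}) = \alpha_{*}/(2 + \alpha_{*})$ and $s/(1+s) = 2/(2 + \alpha_{*})$. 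Finally, \eqref{eq:global:max:ineq} follows from \eqref{eq:max:ineq} with the choice $h = \tfrac12 \max_{z \in B} u(z)$: then $(u - h)_{+} \le \tfrac12 \max_{z \in B} u(z)$ pointwise, so $U \le \tfrac12 \max_{z\in B} u(z)$, and since the exponent $2/(2+\alpha_{*})$ is strictly below $1$ the resulting term can be absorbed into the left-hand side irrespective of the size of $C_{2}$.

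The routine parts are the testing/truncation step and the iteration lemma; the point that needs care is the exponent bookkeeping. Carrying $U$ through the iteration would cost a strictly stronger moment condition, so one must keep the iteration $U$-free, derive the clean bound $\max_{z\in B} u(z) \le k_{0} + c\,F\,\tilde a(k_{0})^{(\rho - p_{*})/(\rho p_{*})}$, and only afterwards bring in $U$ via Chebyshev and optimise; checking that this reproduces precisely the exponent $\alpha_{*} = \rho/(\rho - p_{*})$, and that the whole scheme still goes through after replacing $\theta$ by the slightly larger $\theta'$ forced by Proposition~\ref{prop:Sobolev:ineq}, are the two delicate points.
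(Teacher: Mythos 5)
Your argument is correct and rests on the same pillars as the paper's proof --- testing \eqref{eq:Poisson1} with the truncations $(u-k)_{+}$, the resulting energy estimate (the paper's \eqref{eq:estimate:DF}), the weighted Sobolev inequality \eqref{eq:Sobolev:weighted} after enlarging $\theta$ to a suitable $\theta'$ (the paper fixes $\theta'$ by the explicit formula \eqref{eq:choice_theta_prime}, you by continuity of $\theta\mapsto 2(1-\theta)/(d-\theta)$; both give $\kappa_{1}$ and constants depending only on $\theta,d,p,q$), and a De Giorgi iteration --- but you organize the iteration differently. The paper iterates $\varphi(l)=\Norm{(u-l)_{+}^{2}}{p_{*}, B^{\omega}(x_{0},n)}$, whose recursion started at level $h$ yields \eqref{eq:max:ineq} in one stroke because $\varphi(h)$ is precisely the norm on its right-hand side; you iterate the relative level-set measure $\tilde a(k)$, obtain the intermediate bound $\max u\le k_{0}+c\,F\,\tilde a(k_{0})^{(\rho-p_{*})/(\rho p_{*})}$ with $F=\Norm{\nu^{\omega}}{q, B^{\omega}(x_{0},n)}\,\Norm{f}{p, B^{\omega}(x_{0},n)}$, and only afterwards reinstate $\Norm{(u-h)_{+}}{2p_{*}, B^{\omega}(x_{0},n)}$ via Chebyshev and the optimisation in $t=k_{0}-h$; your exponent bookkeeping ($1/(1+s)=\alpha_{*}/(2+\alpha_{*})$, $s/(1+s)=2/(2+\alpha_{*})$) is accurate, so the two routes give the same estimate. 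For \eqref{eq:global:max:ineq} you also deviate: the paper combines \eqref{eq:max:ineq} for $h=0$ with one further Sobolev application, namely $\Norm{u_{+}^{2}}{p_{*}, B^{\omega}(x_{0},n)}\le C_{\mathrm{S}}\,F\,\max u$, whereas you take $h=\tfrac{1}{2}\max u$ and absorb, which is legitimate because the exponent $2/(2+\alpha_{*})$ is strictly below $1$ (after discarding the trivial case $\max u\le 0$, so that $h\ge 0$ and division is allowed). Neither difference affects validity: the paper's formulation is marginally shorter since the right norm appears automatically, while yours keeps the iteration in the most classical level-set form and decouples the convergence of the scheme from the particular norm in which the right-hand side of \eqref{eq:max:ineq} is measured.
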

\begin{proof}
  Let $p, q \in (1, \infty]$ be such that $1/p + 1/q < 2(1-\theta)/(d-\theta)$, and note that the function $(0, 1) \ni \theta \mapsto 2(1-\theta) / (d - \theta)$ is monoton decreasing. Further, let us choose $\theta' \in (\theta, 1)$ such that
  \begin{align}\label{eq:choice_theta_prime}
    \frac{2(1-\theta')}{d-\theta'}
    \;=\;
    \frac{1-\theta}{d-\theta} + \frac{1}{2} \biggl(\frac{1}{p} + \frac{1}{q}\biggr),
  \end{align}
  and set $\kappa_{1} \equiv \kappa_{1}(\theta, d, p, q) \ldef \theta / (\theta' - \theta)$. Recall the definition of $\rho$ and $d'$  in~\eqref{eq:def:rho}. Then note that the choice of $\theta'$ in \eqref{eq:choice_theta_prime} ensures that $1/p + 1/q < 2/d'$. Hence, defining $p_{*}$ and $\alpha_*$ as in the statement, we have that $\alpha \ldef \rho/p_{*} > 1$ and therefore $\alpha_{*} < \infty$. Further, fix some $\omega \in \Omega_{\mathrm{reg}} \cap \Omega_{0}^{*}$, $R \geq 1$ $x_{0} \in B^{\omega}(0, R n)$ and $n \geq N_{0}(\omega) \vee R^{\kappa_{1}}$. To simplify notation, we set $B \ldef B^{\omega}(x_{0}, n)$ in the following argument.
  
  \smallskip 
  \textsc{Step 1.} Let $0 \leq k < l < \infty$ be fixed constants. Then, by Hölder's inequality and the weighted Sobolev inequality as stated in \eqref{eq:Sobolev:weighted}, we first get
  \begin{align*}
    \Norm{(u-l)_{+}^{2}}{p_{*}, B}
    &\;\leq\;
    \Norm{(u-l)_{+}^{2}}{\alpha p_{*}, B}\,
    \Norm{\indicator_{\{u \geq l\}}}{\alpha_{*} p_{*}, B}
    \\[.5ex]
    &\overset{\!\eqref{eq:Sobolev:weighted}\!}{\;\leq\;}
    C_{\mathrm{S}}\, n^{2} \, \Norm{\nu^{\omega}}{q, B}\,
    \frac{\mathcal{E}^{\omega}\bigl((u-l)_{+}, (u-l)_{+}\bigr)}{|B|}\,
    \Norm{\indicator_{\{u \geq l\}}}{p_{*}, B}^{1/\alpha_{*}}.
  \end{align*}
  On the other hand,
  \begin{align*}
    \scpr{(u-l)_{+}}{-\mathcal{L}^{\omega} u}{\mathcal{C}_{\infty}(\omega)}
    &\overset{\!\eqref{eq:Poisson1}\!}{\;=\;}
    \scpr{(u-l)_{+}}{f/n^{2}}{B}
    \;\leq\;
    \frac{|B|}{n^{2}}\,
    \Norm{f}{p, B}\, \Norm{(u-l)_{+}}{p_{*}, B}.
  \end{align*}
  Since $\bigl(\nabla (u - l)_{+}\bigr)(e)^{2} \leq \bigl(\nabla (u - l)_{+}\bigr)(e) (\nabla u)(e)$ for any $e \in E(\mathcal{C}_{\infty}(\omega))$, we further obtain that $\mathcal{E}^{\omega}\bigl((u - l)_{+}, (u - l)_{+}\bigr) \leq \scpr{(u - l)_{+}}{-\mathcal{L}^{\omega} u}{\mathcal{C}_{\infty}(\omega)}$.  Thus,
  \begin{align}\label{eq:estimate:DF}
    \frac{\mathcal{E}^{\omega}\bigl((u-l)_{+}, (u-l)_{+}\bigr)}{|B|}
    \;\leq\;
    n^{-2}\, \Norm{f}{p, B}\, \Norm{(u-l)_{+}}{p_{*}, B}.
  \end{align}
  By combining the estimates above and using Markov's inequality, we conclude that
  \begin{align*}
    \Norm{(u - l)_{+}^{2}}{p_{*}, B}
    &\;\leq\;
    \frac{%
      C_{\mathrm{S}}\, \Norm{\nu^{\omega}}{q, B}\, \Norm{f}{p, B} 
    }{(l - k)^{1+2/\alpha_{*}}}\,
    \Norm{(u - k)_{+}^{2}}{p_{*}, B}^{1 + 1/\alpha_{*}}. 
  \end{align*}
  By setting $\varphi(l) \ldef \Norm{(u-l)_{+}^{2}}{p_{*}, B}$ and $M \ldef C_{\mathrm{S}} \Norm{f}{p, B} \Norm{\nu^{\omega}}{q, B}$ we can rewrite the last inequality as
  \begin{align}\label{eq:iteration:max:ineq}
    \varphi(l)
    \;\leq\;
    \frac{M}{(l - k)^{1+2/\alpha_{*}}}\, \varphi(k)^{1+1/\alpha_{*}}.
  \end{align}
  
  \smallskip
  \textsc{Step 2.} For any $h \geq 0$ and $j \in \mathbb{N}_{0}$ define
  \begin{align*}
    k_{j} \;\ldef\; h + K(1-2^{-j}),
  \end{align*}
  with $K \ldef 2^{1 + \alpha_{*}} M^{\alpha_{*}/(2 + \alpha_{*})} \varphi(h)^{1/(2 + \alpha_{*})}$. We claim that
  \begin{align}\label{eq:iteration:claim}
    \varphi(k_{j}) \leq \frac{\varphi(h)}{r^{j}},
    \qquad \forall\, j \in \mathbb{N}_{0},
  \end{align}
  where $r \ldef 2^{2 + \alpha_{*}}$. Indeed, for $j = 0$ the assertion is immediate. Further, suppose that~\eqref{eq:iteration:claim} holds for any $i \in \{0, \ldots, j\}$. Then, in view of~\eqref{eq:iteration:max:ineq}, we obtain
  \begin{align*}
    \varphi(k_{j+1})
    &\;\leq\;
    M\, \biggl(\frac{2^{j + 1}}{K}\biggr)^{\!\!1 + 2/\alpha_{*}}\, \varphi(k_{j})
    \;\leq\;
    M\, \biggl(\frac{2^{j + 1}}{K}\biggr)^{\!\!1 + 2/\alpha_{*}}\,
    \biggl(\frac{\varphi(h)}{r^{j}}\biggr)^{\!\!1 + 1/\alpha_{*}}
    \;\leq\;
    \frac{\varphi(h)}{r^{j + 1}},
  \end{align*}
  which completes the proof of the claim \eqref{eq:iteration:claim}. Thus, by letting $j \to \infty$, we get that $\varphi(h + K) = 0$ which is equivalent to $\max_{x \in B} u(x) \leq h + K$. Hence, the assertion~\eqref{eq:max:ineq} follows directly from the definition of $K$ with $C_{2} \ldef 2^{1+\alpha_{*}} C_{\mathrm{S}}^{\alpha_{*}/(2 + \alpha_{*})}$.

  \smallskip
  \textsc{Step 3.} It remains to show \eqref{eq:global:max:ineq}. But, by using Jensen's inequality, the weighted Sobolev inequality as stated in Proposition~\ref{prop:Sobolev:ineq} and \eqref{eq:estimate:DF} with $l = 0$, we get
  \begin{align*}
    \Norm{(u)_{+}^{2}}{p_{*}, B}
    &\;\leq\;
    C_{\mathrm{S}}\, n^{2}\,
    \Norm{\nu^{\omega}}{q, B}\, \frac{\mathcal{E}^{\omega}\bigl((u)_{+}, (u)_{+}\bigr)}{|B|}
    \\[.5ex]
    &\overset{\mspace{-8mu}\eqref{eq:estimate:DF}\mspace{-8mu}}{\;\leq\;}
    C_{\mathrm{S}}\, \Norm{\nu^{\omega}}{q, B}\, \Norm{f}{p, B}\,
    \Norm{(u)_{+}}{p_{*}, B}
    \;\leq\;
    C_{\mathrm{S}}\, \Norm{\nu^{\omega}}{q, B}\, \Norm{f}{p, B}\,
    \max_{x \in B} u(x).
  \end{align*}
  By combining this estimate with~\eqref{eq:max:ineq} for $h = 0$, \eqref{eq:global:max:ineq} follows.
\end{proof}
As an application of this proposition we get an upper bound on the mean exit time for $X$ from $B^{\omega}(x_{0}, n)$. 
\begin{coro}\label{cor:exit}
  Let $d \geq 2$, and suppose there exists $\theta \in (0, 1)$ such  that Assumptions~\ref{ass:law} and~\ref{ass:cluster}-(i) hold. Let $q > (d - \theta)/(2(1-\theta))$. Then, there exists $C_{4} \equiv C_{4}(\theta, d, q)$ such that, for any $\omega \in \Omega_{\mathrm{reg}} \cap \Omega_{0}^{*}$, $R \geq 1$, $n \geq N_{0}(\omega) \vee R^{\kappa_{1}}$, $x_{0} \in B^{\omega}(0, Rn)$ and all $z \in B^{\omega}(x_{0}, n)$,
  \begin{align*}
    \Mean_{z}^{\omega}\bigl[ \tau_{B^{\omega}(x_{0}, n)} \bigr]
    \;\leq\;
    C_{4}\, \Norm{\nu^{\omega}}{q, B^{\omega}(x_{0}, n)}\,  n^2.
  \end{align*}
\end{coro}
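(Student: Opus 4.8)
The plan is to interpret the mean exit time as the solution of a Poisson equation with Dirichlet boundary conditions and then apply Proposition~\ref{prop:Max_ineq_poisson}. Concretely, fix $\omega \in \Omega_{\mathrm{reg}} \cap \Omega_{0}^{*}$, $R \geq 1$, $n \geq N_{0}(\omega) \vee R^{\kappa_{1}}$ and $x_{0} \in B^{\omega}(0, Rn)$, and abbreviate $B \ldef B^{\omega}(x_{0}, n)$. Define $u(z) \ldef \Mean_{z}^{\omega}[\tau_{B}]$ for $z \in B$ and $u(z) \ldef 0$ for $z \notin B$. Since $B$ is a finite set and $\omega \in \Omega_{0}^{*}$, the walk $X$ exits $B$ in finite expected time (for instance because the trace of $X$ on $\mathcal{C}_{\infty}(\omega)$ is an irreducible Markov chain and $B$ is finite), so $u$ is well-defined and bounded. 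A standard first-step / Dynkin-type argument shows that $u$ solves
\begin{align*}
  \left\{
    \begin{array}{rcll}
      \mathcal{L}^{\omega} u & \!\!=\!\! & -1 & \quad \text{on } B, \\[.5ex]
      u & \!\!=\!\! & 0 & \quad \text{on } B^{\mathrm{c}}.
    \end{array}
  \right.
\end{align*}
This is exactly the Poisson equation \eqref{eq:Poisson1} with the choice $f \equiv n^{2}$ (a constant function on $B$).

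First I would record that $u \geq 0$ on all of $\mathcal{C}_{\infty}(\omega)$, so that $\max_{z \in B} u(z) = \Norm{u_{+}}{\infty, B}$ and the conclusion of Proposition~\ref{prop:Max_ineq_poisson}, in particular the global bound \eqref{eq:global:max:ineq}, applies directly. To invoke the proposition I need $p, q \in (1, \infty]$ with $1/p + 1/q < 2(1-\theta)/(d-\theta)$; since $f \equiv n^{2}$ is bounded I may simply take $p = \infty$, and then the hypothesis $q > (d-\theta)/(2(1-\theta))$ from the statement of Corollary~\ref{cor:exit} is exactly $1/q < 2(1-\theta)/(d-\theta)$, i.e.\ $1/p + 1/q < 2(1-\theta)/(d-\theta)$ with $p = \infty$, so the proposition applies with the constant $\kappa_{1} = \kappa_{1}(\theta, d, \infty, q)$ appearing in the range of $n$.

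Then I would just read off the bound: with $f \equiv n^{2}$ we have $\Norm{f}{p, B} = \Norm{f}{\infty, B} = n^{2}$, so \eqref{eq:global:max:ineq} gives
\begin{align*}
  \max_{z \in B^{\omega}(x_{0}, n)} \Mean_{z}^{\omega}\bigl[\tau_{B^{\omega}(x_{0}, n)}\bigr]
  \;=\;
  \max_{z \in B} u(z)
  \;\leq\;
  C_{3}\, \Norm{\nu^{\omega}}{q, B}\, \Norm{f}{p, B}
  \;=\;
  C_{3}\, \Norm{\nu^{\omega}}{q, B^{\omega}(x_{0}, n)}\, n^{2},
\end{align*}
which is the claimed inequality with $C_{4} \ldef C_{3}(\theta, d, \infty, q)$.

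I expect the only non-routine point to be the justification that $u(z) = \Mean_{z}^{\omega}[\tau_B]$ is finite and solves the Poisson equation with right-hand side $-1$; this is a classical fact for continuous-time Markov chains on a finite state space, proved by conditioning on the first jump of $X$ (writing the expected holding time at $z$ as $1/\mu^{\omega}(z)$ and summing the jump probabilities $\omega(\{z,y\})/\mu^{\omega}(z)$ against $u(y)$, then rearranging to get $\mathcal{L}^{\omega} u(z) = -1$), but one should be slightly careful that $B^{\omega}(x_{0}, n)$ is a ball in the graph distance $d^{\omega}$ on the infinite cluster, hence automatically a subset of $\mathcal{C}_{\infty}(\omega)$, so there is no issue with the walk being trapped on a finite component. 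Everything else is a direct substitution into Proposition~\ref{prop:Max_ineq_poisson}.
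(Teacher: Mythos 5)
Your proposal is correct and is essentially the paper's own proof: the paper likewise identifies the (rescaled) mean exit time $u(z)=n^{-2}\Mean_{z}^{\omega}[\tau_{B^{\omega}(x_{0},n)}]$ as the solution of the Poisson equation \eqref{eq:Poisson1} with $f\equiv 1$ and then applies the global maximal inequality \eqref{eq:global:max:ineq}; your choice $f\equiv n^{2}$ with $u(z)=\Mean_{z}^{\omega}[\tau_{B}]$ is just a trivial renormalization of the same argument, and taking $p=\infty$ so that the hypothesis $q>(d-\theta)/(2(1-\theta))$ matches the condition of Proposition~\ref{prop:Max_ineq_poisson} is exactly what the corollary's statement implicitly does.
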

\begin{proof}
  Recall that the probabilistic representation of the non-negative solution $u$ of~\eqref{eq:Poisson1} with $f\colon B^{\omega}(x_{0}, n) \to \mathbb{R}$, $f \equiv 1$ is given by $u(z) = n^{-2} \Mean_{z}^{\omega}\bigl[\tau_{B^{\omega}(x_{0}, n)}\bigr]$ for any $z \in B^{\omega}(x_{0}, n)$. Thus, the claim follows from \eqref{eq:global:max:ineq}.
\end{proof}

\section{A central limit theorem for integral functionals} \label{sec:CLT_intFunctionals}
\subsection{A quenched invariance principle for a class of initial measures}
In this subsection we extend the QFCLT as stated in Theorem~\ref{thm:QFCLT:RG} to a certain class of initial (signed) measures. Recall the notation $X^{n} = \bigl(n^{-1} X_{t n^{2}}\bigr)_{t \geq 0}$.
\begin{theorem}\label{thm:QFCLT:RG:initial}
  Let $d \geq 2$, and suppose that there exist $\theta \in (0, 1)$ and $p, q \in [1, \infty]$ such that Assumptions~\ref{ass:law}, \ref{ass:cluster}-(i) and~\ref{ass:pq} hold. Further, let $f\colon \mathbb{R}^{d} \to \mathbb{R}$ be a continuous function with bounded support. Then, $\prob_{0}$-a.s., for any $T > 0$ and every bounded continuous function $F\colon D\bigl([0, T], \mathbb{R}^{d}\bigr) \to \mathbb{R}$,
  \begin{align}
    \label{eq:QFCLT:RG:init}
    \lim_{n \to \infty}
    \frac{1}{n^{d}}\sum_{z \in \mathcal{C}_{\infty}(\omega)}\mspace{-6mu} f(z/n)
    \Mean_{z}^{\omega}\bigl[ F(X^{n}) \bigr]
    \;=\;
    \prob\bigl[0 \in \mathcal{C}_{\infty}\bigr]
    \int_{\mathbb{R}^{d}} f(x) \Mean_{x}^{\Sigma}\bigl[ F(W) \bigr]\, \mathrm{d}x,
  \end{align}
  where $\bigl(W, (\Prob_{x}^{\Sigma})_{x \in \mathbb{R}^{d}}\bigr)$ denotes a Brownian motion with non-degenerate, deterministic covariance matrix $\Sigma^{2} \ldef \Sigma \cdot \Sigma^{\mathrm{T}}$ as appearing in Theorem~\ref{thm:QFCLT:RG}. In particular, the null set does not depend on $f$.
\end{theorem}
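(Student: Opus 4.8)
The plan is to reduce \eqref{eq:QFCLT:RG:init} to the pointwise QFCLT in Theorem~\ref{thm:QFCLT:RG} by viewing the left-hand side as integration of the deterministic function $x \mapsto \Mean_{x}^{\Sigma}[F(W)]$ against a sequence of (random) measures that converges weakly to $\prob[0 \in \mathcal{C}_{\infty}]\, f(x)\, \mathrm{d}x$. Concretely, for $\prob_0$-a.e.\ $\omega$ define the finite signed measures $\mu_n^\omega$ on $\mathbb{R}^d$ by $\mu_n^\omega \ldef n^{-d} \sum_{z \in \mathcal{C}_\infty(\omega)} f(z/n)\, \delta_{z/n}$, and the functions $G_n^\omega(x) \ldef \Mean_{\pi_n^{\mathrm{nb}}(x)}^{\omega}[F(X^n)]$ where $\pi_n^{\mathrm{nb}}(x)$ assigns to $x$ the unique lattice point $z$ with $z/n$ in the half-open cube containing $x$ (so that $\int G_n^\omega\, f\, d(\text{Lebesgue scaled})$ reproduces the sum). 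Then the left side of \eqref{eq:QFCLT:RG:init} equals $\int_{\mathbb{R}^d} G_n^\omega(x)\, f(x)\, \mathbbm{1}_{\{\lfloor nx\rfloor \in \mathcal{C}_\infty(\omega)\}}\, \mathrm{d}x$ up to harmless indexing, and the task is to pass to the limit in this integral.

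First I would handle the measure-theoretic core. By the ergodic theorem applied to the stationary field $\omega \mapsto \mathbbm{1}_{\{0 \in \mathcal{C}_\infty(\omega)\}}$ (using Assumption~\ref{ass:law}-(i)), for $\prob$-a.e.\ $\omega$ and any bounded continuous $\psi$ with compact support, $n^{-d}\sum_{z \in \mathcal{C}_\infty(\omega)} \psi(z/n) \to \prob[0 \in \mathcal{C}_\infty] \int \psi$; hence $\mu_n^\omega \to \prob[0\in\mathcal C_\infty]\, f\,\mathrm{d}x$ vaguely (and, since $f$ has bounded support, weakly). To upgrade this to convergence of $\int G_n^\omega\, \mathrm{d}\mu_n^\omega$ I need (a) that $G_n^\omega(x) \to \Mean_x^\Sigma[F(W)]$ and (b) enough uniformity/equicontinuity to justify interchanging limits against a weakly convergent sequence of measures. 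Step (a) is \emph{almost} Theorem~\ref{thm:QFCLT:RG}, but that theorem only gives convergence from the fixed starting point $0$; here I need it starting from $\pi_n^{\mathrm{nb}}(x)$, i.e.\ from a point at Euclidean distance $O(n)$ from the origin, so the spatial argument moves with $n$. The standard device is shift-invariance: $\Mean_{z}^{\omega}[F(X^n)] = \Mean_{0}^{\tau_z\omega}[F(X^n)]$ by \eqref{eq:HK_shift}-type covariance, so $G_n^\omega(x) = \Mean_0^{\tau_{\pi_n^{\mathrm{nb}}(x)}\omega}[F(X^n)]$, and one would like to invoke the QFCLT ``simultaneously along the whole shift orbit.'' This is exactly where a soft ergodic argument is needed rather than a pointwise one.

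The clean way to organize this is a Portmanteau/double-limit argument: introduce the deterministic functions $H(x) \ldef \Mean_x^\Sigma[F(W)]$ (continuous and bounded in $x$ by Feller continuity of Brownian motion and boundedness of $F$), and write
\begin{align*}
  \int G_n^\omega\, \mathrm{d}\mu_n^\omega - \prob[0\in\mathcal C_\infty]\int f H
  \;=\; \int (G_n^\omega - H)\, \mathrm{d}\mu_n^\omega \;+\; \Bigl(\int H\, \mathrm{d}\mu_n^\omega - \prob[0\in\mathcal C_\infty]\int fH\Bigr).
\end{align*}
The second term vanishes by the ergodic theorem above (with $\psi = fH$). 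For the first term I would \emph{not} try to prove uniform convergence $\sup_x|G_n^\omega(x)-H(x)|\to 0$ (which is false, as $G_n^\omega$ depends on the local environment near $nx$); instead I would integrate the QFCLT itself. The trick: for fixed bounded continuous $F$, apply Theorem~\ref{thm:QFCLT:RG} in its already-known ``integrated'' form — or rather re-run its proof, which proceeds via tightness plus finite-dimensional convergence through the corrector method and is insensitive to a shifted starting point once one has the sublinear corrector on the whole cluster. Since the corrector sublinearity holds on all of $\mathcal{C}_\infty(\omega)$ $\prob_0$-a.s.\ (this is the content underlying Theorem~\ref{thm:QFCLT:RG}, valid under Assumptions~\ref{ass:law}, \ref{ass:cluster}-(i), \ref{ass:pq}), the same corrector simultaneously linearizes the walk started from every $z \in \mathcal{C}_\infty(\omega)$, and one obtains $G_n^\omega(\cdot) \to H(\cdot)$ in the sense of $L^1(f\,\mathrm{d}x)$ against the cluster-averaged measure — precisely $\int (G_n^\omega - H)\, \mathrm{d}\mu_n^\omega \to 0$ — after noting that $|G_n^\omega|\le \|F\|_\infty$ is bounded and $f$ has bounded support, so dominated convergence applies once one has convergence in measure, and convergence in measure along the cluster follows from the single-point QFCLT combined with the ergodic theorem (a Markov-type argument: the $\mu_n^\omega$-measure of the bad set $\{x : |G_n^\omega(x)-H(x)|>\eta\}$ tends to $0$ because its $\prob$-expectation does, by Theorem~\ref{thm:QFCLT:RG} plus Fubini, and one passes to the quenched statement via a subsequence/Borel–Cantelli argument along a countable dense family of $(F,\eta)$, using separability of $C_b(D([0,T],\mathbb R^d))$ in an appropriate sense or a monotone class reduction to a countable determining family of $F$).

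The main obstacle I anticipate is this last point: turning the \emph{annealed} statement ``$\mean_0[\,|G_n^\omega(x)-H(x)|\,] \to 0$ for fixed $x$'' into a \emph{quenched} statement uniform enough to kill the first term \emph{for a single null set not depending on $f$}. The resolution is exactly the ``purely ergodic- and measure-theoretic'' route flagged in the introduction: reduce the class of test functionals $F$ to a countable convergence-determining family (e.g.\ finite-dimensional bounded Lipschitz functionals), reduce $f$ to a countable dense family in $C_c(\mathbb{R}^d)$, apply the ergodic theorem and Theorem~\ref{thm:QFCLT:RG} on this countable collection to extract one full-measure $\omega$-set, and finally use density plus the uniform bound $\|F\|_\infty$ and tightness to extend to all continuous bounded $F$ and all continuous compactly supported $f$. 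The interchange of the limit with the weakly convergent measures $\mu_n^\omega$ is then legitimate because on the good $\omega$-set we have genuine weak convergence of $\mu_n^\omega$ and convergence in $\mu_n^\omega$-measure of $G_n^\omega$ to the \emph{continuous} limit $H$, which together give $\int G_n^\omega\,\mathrm{d}\mu_n^\omega \to \prob[0\in\mathcal C_\infty]\int fH$.
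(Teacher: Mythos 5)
Your overall architecture (split off the ergodic-theorem term, reduce to a manageable class of functionals, finish with a Portmanteau argument — which is exactly the paper's Step~2) is sound, and you correctly isolated the crux: turning the single-point quenched statement of Theorem~\ref{thm:QFCLT:RG} into control of the spatial average $n^{-d}\sum_{z}f(z/n)\bigl|\Mean_{0}^{\tau_{z}\omega}[F(\cdot+X^{n})]-\Mean_{0}^{\Sigma}[F(\cdot+W)]\bigr|$, where the shift $\tau_{z}\omega$ ranges over a region growing with $n$. But the mechanism you propose for this step does not work as stated. Your route is annealed-to-quenched: the $\prob$-expectation of the bad-set mass tends to $0$ by Fubini, and you want to upgrade via ``a subsequence/Borel--Cantelli argument along a countable dense family.'' Borel--Cantelli requires summable probabilities, hence a quantitative rate of convergence in the QFCLT, which is not available in this degenerate ergodic setting (Theorem~\ref{thm:QFCLT:RG} is purely qualitative); a subsequence argument only yields almost sure convergence along a subsequence, not the full limit \eqref{eq:QFCLT:RG:init}. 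Your fallback — re-running the corrector proof so that sublinearity ``simultaneously linearizes the walk started from every $z$'' — is not a soft fix either: it amounts to re-proving the invariance principle with starting points moving at scale $n$, which is precisely the heavy detour the paper avoids.

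The paper's actual device, which your proposal is missing, is Egorov's theorem combined with the spatial ergodic theorem of Boivin--Depauw: for each cube centre $x_{k}$ the quenched convergence $\Mean_{0}^{\omega}[F(x_{k}+X^{n})]\to\Mean_{0}^{\Sigma}[F(x_{k}+W)]$ holds $\prob_{0}$-a.s., so by Egorov it is \emph{uniform} on a set $\Omega_{\eta}$ with $\prob_{0}[\Omega_{\eta}]\geq 1-\eta$; the ergodic theorem then shows that the fraction of cluster points $z$ with $\tau_{z}\omega\in\Omega_{\eta}$ (resp.\ $\notin\Omega_{\eta}$) has the right density, so the average splits into a uniformly vanishing part and a part of size $O(\eta)$, and $\eta\downarrow 0$ finishes the term. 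This requires no rates and no corrector input beyond Theorem~\ref{thm:QFCLT:RG} itself. A secondary point: the $x$-dependence of the functional $F(x+\cdot)$ (your $G_{n}^{\omega}(x)-H(x)$) must also be tamed; the paper does this by first assuming $F$ uniformly continuous and discretising $\supp f$ into cubes of side $\delta_{\varepsilon}$, so that only finitely many fixed functionals $F(x_{k}+\cdot)$ enter the Egorov step, and then removes the uniform-continuity assumption by Portmanteau — the last move coinciding with the end of your proposal. Without the Egorov/ergodic substitute for your Borel--Cantelli step, the proof as proposed has a genuine gap.
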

Before we prove Theorem~\ref{thm:QFCLT:RG:initial} we recall briefly the Skorokhod $J_{1}$-topology with respect to which the spaces $D([0, T], \mathbb{R^{d}})$, $T > 0$, are Polish. For this purpose, let $\Lambda_{T}$ be the set of all continuous functions $\lambda\colon [0, T] \to [0, T]$ that are strictly increasing with $\lambda_{0} = 0$ and $\lambda_{T} = T$. Further, set
\begin{align*}
  \vvvert \lambda \vvvert_{[0, T]}
  \;\ldef\;
  \sup_{\substack{s, t \in [0, T] \\ s < t}}\;
  \biggl| \log \frac{\lambda_{t} - \lambda_{s}}{t-s} \biggr|
\end{align*}
and define for any $w, w' \in D([0, T], \mathbb{R}^{d})$,
\begin{align}
  \label{eq:def:d_[0,T]}
  d_{[0, T]}(w, w')
  \;\ldef\;
  \inf_{\lambda \in \Lambda_{T}}
  \Bigl(
    \vvvert \lambda \vvvert_{[0, T]} \vee \bigl\| w \circ \lambda - w' \bigr\|_{[0, T]}
  \Bigr),
\end{align}
where $\| w \|_{[0, T]} \ldef \sup_{t \in [0, T]} |w_{t}|_{2}$. Then, for each $T > 0$, it holds that $d_{[0, T]}$ is a metric and $D([0, T], \mathbb{R^{d}})$ equipped with $d_{[0, T]}$ is complete and separable, see e.g.\ \cite[Theorem~12.2]{Bi99}. A complete metric on $D([0, \infty), \mathbb{R}^{d})$ can be defined in terms of the metrics $d_{[0, T]}$, $T > 0$, where one needs to consider transformations of $w|_{[0,T]}$ and $w'|_{[0,T]}$ that are continuous at $T$. Alternatively, a metrisable topology for which $D([0, \infty), \mathbb{R}^{d})$ is a Polish space can also be described as follows. A sequence $(w^{n})_{n \in \mathbb{N}} \subset D([0, \infty), \mathbb{R}^{d})$ converges to some $w \in D([0, \infty), \mathbb{R}^{d})$, if and only if, there exists a sequence $(\lambda^{n}) \subset \Lambda_{\infty}$ such that
\begin{align*}
  \lim_{n \to \infty }\sup_{s \in [0, \infty)} \bigl| \lambda_{s}^{n} - s \bigr|
  \;=\;
  0
  \qquad \text{and} \qquad
  \lim_{n \to \infty} \bigl\| w^{n}\circ \lambda^{n} - w\bigr\|_{[0,T]}
  \;=\;
  0,
  \quad \forall\, T \in \mathbb{N},
\end{align*} 
where $\Lambda_{\infty}$ denotes the set of all continuous functions from $[0, \infty)$ onto itself that are strictly increasing with $\lambda_{0} = 0$ and $\lim_{t \to \infty} \lambda_{t} = \infty$.
\begin{proof}[Proof of Theorem~\ref{thm:QFCLT:RG:initial}]
  Let $T > 0$ and $F\colon D([0, T], \mathbb{R}^{d}) \to \mathbb{R}$ be a bounded continuous function. First, notice that, for any $\omega \in \Omega_{0}^{*}$ and $z \in \mathcal{C}_{\infty}(\omega)$, the law of $X$ under $\Prob_{z}^{\omega}$ coincides with the law of $z+X$ under $\Prob_{0}^{\tau_{z} \omega}$. This together with the spatial homogeneity of the limiting Brownian motion $W$ implies that
  \begin{align*}
    \Mean_{z}^{\omega}\bigl[ F(X^{n}) \bigr]
    \;=\;
    \Mean_{0}^{\tau_{z} \omega}\bigl[ F(z/n + X^{n})\bigr]
    \qquad \text{and} \qquad
    \Mean_{x}^{\Sigma}\bigl[ F(W)\bigr] \;=\; \Mean_{0}^{\Sigma}\bigl[ F(x + W) \bigr],
  \end{align*}
  for any $z \in \mathbb{Z}^{d}$, $n \in \mathbb{N}$, and $x \in \mathbb{R}^{d}$.  Thus, we need to show that, $\prob_{0}$-a.s.,
  \begin{align}
    \label{eq:QFCLT:RG:initial:shifted}
    \lim_{n \to \infty}
    \frac{1}{n^{d}}\mspace{-6mu} \sum_{z \in \mathcal{C}_{\infty}(\omega)}\mspace{-6mu}
    f(z/n) \Mean_{0}^{\tau_{z} \omega}\bigl[ F(z/n + X^{n}) \bigr]
    \;=\;
    \prob\bigl[0 \in \mathcal{C}_{\infty}\bigr]
    \int_{\mathbb{R}^{d}}
      f(x) \Mean_{0}^{\Sigma}\bigl[ F(x + W) \bigr]\,
    \mathrm{d}x.
  \end{align}
  The proof we are going to present comprises two steps.
  \smallskip
  
  \textsc{Step 1.}
  Let us additionally assume that $F$ is uniformly continuous. Fix some arbitrary $\varepsilon > 0$. Then, there exists $\delta_{\varepsilon} > 0$ such that $|F(w) - F(w')| < \varepsilon$ for any $w, w' \in D([0, T], \mathbb{R}^{d})$ with $d_{[0, T]}(w, w') < \delta_{\varepsilon}$. Since $\supp f \subset \mathbb{R}^{d}$ is assumed to be bounded, there exist $K \equiv K(\delta_{\varepsilon}) \in \mathbb{N}$ and $x_{1}, \ldots, x_{K} \in \mathbb{R}^{d}$ such that, setting $Q_{k} \ldef x_{k} + (-\delta_{\varepsilon}/2, \delta_{\varepsilon}/2]^{d}$ for any $k \in \{1, \ldots, K\}$, we have that $Q_{k} \cap Q_{\ell} = \emptyset$ for all $k, \ell \in \{1, \ldots, K\}$ with $k \ne \ell$ and $\supp f \subset \bigcup_{k = 1}^{K} Q_{k}$. Further, let $n Q_{k} \ldef \{z \in \mathbb{Z}^{d} : z/n \in Q_{k}\}$ for any $n \in \mathbb{N}$. In what follows we choose $n \in \mathbb{N}$ large enough to ensure that $n Q_{k} \ne \emptyset$ for all $k \in \{1, \ldots, K\}$.

  Then, by using both finite additivity and the triangle inequality, we get
  \begin{align}
    &\biggl|
      \frac{1}{n^{d}}\mspace{-6mu}
      \sum_{z \in \mathcal{C}_{\infty}(\omega)} \mspace{-6mu} f(z/n)
      \Mean_{0}^{\tau_{z} \omega}\bigl[ F(z/n + X^{n}) \bigr]
      -
      \prob\bigl[0 \in \mathcal{C}_{\infty}\bigr]
      \int_{\mathbb{R}^{d}} f(x) \Mean_{0}^{\Sigma}\bigl[ F(x + W) \bigr]\, \mathrm{d}x
    \biggr|
    \nonumber\\
    &\mspace{36mu}\leq\;
    \sum_{k=1}^{K}\;
    \biggl|
      \frac{1}{n^{d}}\mspace{-6mu}
      \sum_{z \in n Q_{k}} \mspace{-6mu} f(z/n)\,
      \mathbbm{1}_{\{z \in \mathcal{C}_{\infty}(\omega)\}}
      -
      \biggl( \int_{Q_{k}}\! f(x)\, \mathrm{d}x \biggr)
      \prob\bigl[0 \in \mathcal{C}_{\infty}\bigr]
    \biggr|
    \Mean_{0}^{\Sigma}\bigl[ |F(x_{k} + W)| \bigr]
    \nonumber\\
    &\mspace{66mu}+\;
    \sum_{k=1}^{K}
    \frac{1}{n^{d}}\mspace{-6mu} \sum_{z \in n Q_{k}}
    \mspace{-6mu} |f(z/n)|\, \mathbbm{1}_{\{z \in \mathcal{C}_{\infty}(\omega)\}}\,
    \Bigl|
      \Mean_{0}^{\tau_{z} \omega}\bigl[ F(x_{k} + X^{n})\bigr]
      -
      \Mean_{0}^{\Sigma}\bigl[ F(x_{k} + W) \bigr]
    \Bigr|
    \nonumber\\
    &\mspace{66mu}+\;
    \sum_{k=1}^{K} \frac{1}{n^{d}}\mspace{-6mu}
    \sum_{z \in n Q_{k}} \mspace{-6mu} |f(z/n)|\,
    \mathbbm{1}_{\{z \in \mathcal{C}_{\infty}(\omega)\}}\,
    \Mean_{0}^{\tau_{z} \omega}\bigl[
      \bigl| F(z/n + X^{n}) - F(x_{k} + X^{n})\bigr|
    \bigr]
    \nonumber\\
    &\mspace{66mu}+\;
    \prob\bigl[0 \in \mathcal{C}_{\infty}\bigr]\,
    \sum_{k=1}^{K}
    \int_{Q_{k}}
      |f(x)|\,
      \Mean_{0}^{\Sigma}\bigl[
        \bigl| F(x + W) - F(x_{k} + W)\bigr|
      \bigr]\,
    \mathrm{d}x
    \nonumber\\
    &\mspace{36mu}\rdef\;
    I_{1}(n, \varepsilon, \omega) + I_{2}(n, \varepsilon, \omega)
    + I_{3}(n, \varepsilon, \omega) + I_{4}(\varepsilon).
  \end{align}
  Now, by applying the version of the ergodic theorem in \cite[Theorem~3]{BD03}, we first obtain that $\limsup_{n \to \infty} I_{1}(n, \varepsilon, \omega) = 0$ for $\prob_{0}$-a.e.\ $\omega$. Further, for any $k \in \{1, \ldots, K\}$ and any realization of $X^{n}$ and $W$, we get from the definition of $d_{[0, T]}$ as given in \eqref{eq:def:d_[0,T]} that
  \begin{align*}
    d_{[0, T]}\bigl(z/n + X^{n}, x_{k} + X^{n}\bigr)
    &\;\leq\;
    |z/n - x_{k}|_{2}
    \;<\;
    \delta_{\varepsilon},
    &&\forall\, z \in n Q_{k},
    \\[0.5ex]
    d_{[0, T]}\bigl(x + W, x_{k} + W\bigr)
    &\;\leq\;
    |x - x_{k}|_{2}
    \;<\;
    \delta_{\varepsilon},
    &&\forall\, x \in Q_{k}.
  \end{align*}
  Since $F$ is assumed to be uniformly continuous, it follows that
  \begin{align}\label{eq:F:uniform:continuity:local}
    \bigl| F(z/n + X^{n}) - F(x_{k} + X^{n}) \bigr| \;<\; \varepsilon
    \quad \text{and} \quad
    \bigl| F(x + W) - F(x_{k} + W)\bigr| \;<\; \varepsilon
  \end{align}
  for any $z/n, x \in Q_{k}$. Hence, a further application of \cite[Theorem~3]{BD03} yields that, $\mathbb{P}_{0}$-a.s., $\limsup_{n \to \infty} I_{3}(n, \varepsilon, \omega) \leq \varepsilon \mathbb{P}\bigl[0 \in \mathcal{C}_{\infty}\bigr] \int_{\mathbb{R}^{d}} |f(x)|\, \mathrm{d}x$. Likewise, we obtain that $I_{4}(\varepsilon) \leq \varepsilon \prob_{0}\bigl[0 \in \mathcal{C}_{\infty}\bigr] \int_{\mathbb{R}^{d}} |f(x)|\, \mathrm{d}x $. These two estimates clearly imply that, $\prob_{0}$-a.s.,
  \begin{align*}
    \limsup_{\varepsilon \downarrow 0} \limsup_{n \to \infty}
    I_{3}(n, \varepsilon, \omega)
    \;=\;
    0
    \qquad \text{and} \qquad
    \limsup_{\varepsilon \downarrow 0} I_{4}(\varepsilon)
    \;=\;
    0.
  \end{align*}

  It remains to consider the term $I_{2}$. From Theorem~\ref{thm:QFCLT:RG} we know that, for $\prob_{0}$-a.e.~$\omega$, $\Mean_{0}^{\omega}\bigl[ F(x_{k} + X^{n}) \bigr] \to \Mean_{0}^{\Sigma}\bigl[ F(x_{k} + W) \bigr]$ as $n \to \infty$ for any $k \in \{1, \ldots, K\}$. By Egorov's theorem, see e.g.\ \cite[Section V.14]{Do94}, for any $\eta > 0$ there exists a measurable $\Omega_{\eta} \subset \Omega$ with $\prob_{0}\bigl[\Omega_{\eta}\bigr] \geq 1 - \eta$ such that
  \begin{align*}
    \sup_{\omega \in \Omega_{\eta}}
    \Bigl|
      \Mean_{0}^{\omega}\bigl[ F(x_{k} + X^{n}) \bigr]
      - \Mean_{x}^{\Sigma}\bigl[ F(x_{k} + W) \bigr]
    \Bigr|
    \underset{n \to \infty}{\;\longrightarrow\;}
    0,
    \qquad \forall\, k \in \{1, \ldots, K\}.
  \end{align*}
  Since, by \cite[Theorem~3]{BD03},
  \begin{align*}
    &\lim_{n \to \infty}
    \sum_{k=1}^{K} \frac{1}{n^{d}}
    \mspace{-6mu}\sum_{z \in n Q_{k}}\mspace{-6mu} |f(z/n)|\,
    \indicator_{\{z \in \mathcal{C}_{\infty}(\omega)\}}\,
    \indicator_{\{\tau_{z} \omega \in \Omega_{\eta}\}}
    \\[.5ex]
    &\mspace{72mu}=\;
    \int_{\mathbb{R}^{d}} |f(x)|\, \mathrm{d}x\,
    \prob\bigl[ 0 \in \mathcal{C}_{\infty} \bigr]
    \prob_{0}\bigl[ \Omega_{\eta} \bigr]
    \;<\; \infty,
  \end{align*}
  and
  \begin{align*}
    &\lim_{n \to \infty}
    \sum_{k=1}^{K} \frac{1}{n^{d}}
    \mspace{-6mu}\sum_{z \in n Q_{k}}\mspace{-6mu} |f(z/n)|\,
    \indicator_{\{z \in \mathcal{C}_{\infty}(\omega)\}}\,
    \indicator_{\{\tau_{z} \omega \in \Omega \setminus \Omega_{\eta}\}}
    \\[.5ex]
    &\mspace{72mu}=\;
    \int_{\mathbb{R}^{d}} |f(x)|\, \mathrm{d}x\,
    \prob\bigl[ 0 \in \mathcal{C}_{\infty} \bigr]
    \prob_{0}\bigl[ \Omega \setminus \Omega_{\eta} \bigr]
    \;\leq\;
    \eta\, \int_{\mathbb{R}^{d}} |f(x)|\, \mathrm{d}x,
  \end{align*}
  for $\prob_{0}$-a.e.\ $\omega$, and $F$ is assumed to be bounded by some $C < \infty$, say, we obtain that, for any $\varepsilon > 0$,
  \begin{align*}
    \limsup_{n \to \infty}
    I_{2}(n, \varepsilon, \omega)
    \;\leq\;
   2C \eta\, \int_{\mathbb{R}^{d}} f(x)\, \mathrm{d}x
    \;\underset{\eta \downarrow 0}{\rightarrow}\;
    0,
    \qquad \prob_{0}\text{-a.s.}
  \end{align*}
  By combining the estimates above, the assertion \eqref{eq:QFCLT:RG:initial:shifted} follows for any bounded, uniformly continuous functions $F\colon D([0, T], \mathbb{R}^{d}) \to \mathbb{R}$.
  \smallskip
  
  \textsc{Step 2.} It remains to remove the additional assumption that $F$ is uniformly continuous. For this purpose, define a finite measure, $\mathfrak{m}$, and, for every $\omega \in \Omega$, a sequence of finite measures, $(\mathfrak{m}_{n}^{\omega})_{n \in \mathbb{N}}$ on $D([0, T], \mathbb{R}^{d})$ by
  \begin{align*}
    \mathfrak{m}
    \;\ldef\;
    \mathbb{P}\bigl[0 \in \mathcal{C}_{\infty}\bigr]\,
    \int_{\mathbb{R}^{d}} f(x)\, \Prob_{x}^{\Sigma} \circ\, W^{-1}
    \quad \text{and} \quad
    \mathfrak{m}_{n}^{\omega}
    \;\ldef\;
    \frac{1}{n^{d}} \mspace{-6mu} \sum_{z \in \mathcal{C}_{\infty}(\omega)}\mspace{-6mu}
    f(z/n) \Prob_{z}^{\omega} \circ\, (X^{n})^{-1}.
  \end{align*}
  Then, by \emph{Step 1}, it holds that
  \begin{align*}
    \lim_{n \to \infty} \int F\, \mathrm{d}\mathfrak{m}_{n}^{\omega}
    \;=\;
    \int F\, \mathrm{d}\mathfrak{m}
  \end{align*}
  for any bounded, uniformly continuous $F\colon D([0, T], \mathbb{R}^{d})$ and $\prob_{0}$-a.s.\ $\omega$. Thus, \eqref{eq:QFCLT:RG:init} is an immediate consequence of the Portemanteau theorem, cf.~\cite[Satz~VII.4.10]{El18} or \cite[Theorem~2.1]{Bi99}.
\end{proof}

\subsection{An extended quenched invariance principle}
\begin{theorem}
  \label{thm:FCLT:Poisson}
  Let $d \geq 2$ and $D \subset \mathbb{R}^{d}$ be a bounded, strongly regular domain. Suppose that there exist $\theta \in (0, 1)$ and $p, q \in [1, \infty]$ such that Assumptions~\ref{ass:law}, \ref{ass:cluster} and~\ref{ass:pq} hold. Then, $\prob_{0}$-a.s., for every bounded continuous function $f\colon \mathbb{R}^{d} \to \mathbb{R}$ with bounded support and every $g \in C(\mathbb{R}^{d}, \mathbb{R})$,
  \begin{align}
    \label{eq:FCLT:Poisson}
    &\lim_{n \to \infty}
    \frac{1}{n^{d}} \mspace{-6mu}\sum_{z \in \mathcal{C}_{\infty}(\omega)}\mspace{-6mu}
    f(z/n)\,
    \Mean_{z}^{\omega}\biggl[
      \int_{0}^{\tau_{D}(X^{n})} g\bigl(X_{t}^{n}\bigr)\, \mathrm{d}t
    \biggr] 
    \nonumber\\
    &\mspace{72mu}=\;
    \prob\bigl[ 0 \in \mathcal{C}_{\infty} \bigr]\,
    \int_{D}
      f(x)\,
      \Mean_{x}^{\Sigma}\biggl[
        \int_{0}^{\tau_{D}(W)} g\bigl(W_t\bigr)\, \mathrm{d}t
      \biggr]
    \mathrm{d}x.
  \end{align}
\end{theorem}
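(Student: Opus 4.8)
The plan is to derive \eqref{eq:FCLT:Poisson} from the extended quenched invariance principle of Theorem~\ref{thm:QFCLT:RG:initial} by a truncation argument, with the a-priori exit-time bound of Corollary~\ref{cor:exit} providing the required tail control. Splitting $f$ into positive and negative parts, one may assume $f \geq 0$. Introduce the finite Borel measures on $D([0, \infty), \mathbb{R}^{d})$ already appearing in the proof of Theorem~\ref{thm:QFCLT:RG:initial},
\begin{align*}
  \mathfrak{m}_{n}^{\omega}
  \;\ldef\;
  \frac{1}{n^{d}} \sum_{z \in \mathcal{C}_{\infty}(\omega)} f(z/n)\; \Prob_{z}^{\omega} \circ (X^{n})^{-1}
  \quad \text{and} \quad
  \mathfrak{m}
  \;\ldef\;
  \prob\bigl[0 \in \mathcal{C}_{\infty}\bigr]
  \int_{\mathbb{R}^{d}} f(x)\; \Prob_{x}^{\Sigma} \circ W^{-1}\, \mathrm{d}x,
\end{align*}
so that Theorem~\ref{thm:QFCLT:RG:initial} amounts to the weak convergence $\mathfrak{m}_{n}^{\omega} \to \mathfrak{m}$ for $\prob_{0}$-a.e.\ $\omega$. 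Since the functional $F(w) \ldef \int_{0}^{\tau_{D}(w)} g(w_{t})\, \mathrm{d}t$ is neither bounded nor everywhere continuous, it cannot be inserted into the Portmanteau theorem directly, so I would work with its truncation $F_{M}(w) \ldef \int_{0}^{\tau_{D}(w) \wedge M} g(w_{t})\, \mathrm{d}t$ for $M > 0$. As $w_{t} \in D$ for all $t < \tau_{D}(w)$ and $D$ is bounded, $F_{M}$ depends only on $w|_{[0, M]}$, is bounded by $M \sup_{\bar{D}} |g|$, and $|F(w) - F_{M}(w)| \leq \bigl(\sup_{\bar{D}} |g|\bigr)\,\bigl(\tau_{D}(w) - M\bigr)_{+}$.

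The first substantial step is to establish uniform moment bounds for the (rescaled) exit times. By Remark~\ref{rem:ass_cluster}-(ii) there is $C_{D} < \infty$ with $n D \cap \mathcal{C}_{\infty}(\omega) \subset B^{\omega}(0, C_{D} n)$ for all large $n$, and since the graph distance dominates $|\cdot|_{1}$, this ball is contained in the $\ell^{1}$-ball of radius $C_{D} n$; hence $\limsup_{n} \Norm{\nu^{\omega}}{q, B^{\omega}(0, C_{D} n)} < \infty$ for $\prob_{0}$-a.e.\ $\omega$ by the spatial ergodic theorem together with volume regularity. Applying Corollary~\ref{cor:exit} with $x_{0} = 0$ and radius $C_{D} n$, and using $\tau_{n D}(X) \leq \tau_{B^{\omega}(0, C_{D} n)}(X)$, one obtains a finite $\Gamma(\omega)$ with $\sup_{z \in n D \cap \mathcal{C}_{\infty}(\omega)} \Mean_{z}^{\omega}\bigl[\tau_{D}(X^{n})\bigr] \leq \Gamma(\omega)$ for all large $n$. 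Chebyshev's inequality and the Markov property then self-improve this to $\Prob_{z}^{\omega}\bigl[\tau_{D}(X^{n}) > 2 k \Gamma(\omega)\bigr] \leq 2^{-k}$ for $k \in \mathbb{N}$, whence a uniform bound on $\Mean_{z}^{\omega}\bigl[\tau_{D}(X^{n})^{2}\bigr]$ and, by Cauchy--Schwarz, $\delta_{M}(\omega) \ldef \sup_{n} \sup_{z} \Mean_{z}^{\omega}\bigl[(\tau_{D}(X^{n}) - M)_{+}\bigr] \to 0$ as $M \to \infty$; the analogue $\varrho_{M} \ldef \sup_{x \in \bar{D}} \Mean_{x}^{\Sigma}\bigl[(\tau_{D}(W) - M)_{+}\bigr] \to 0$ is classical for Brownian motion in a bounded domain. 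Since $\supp f$ is bounded, these bounds give $\sup_{n} |\int F\, \mathrm{d}\mathfrak{m}_{n}^{\omega} - \int F_{M}\, \mathrm{d}\mathfrak{m}_{n}^{\omega}| \leq C_{f}\, \delta_{M}(\omega)$ and $|\int F\, \mathrm{d}\mathfrak{m} - \int F_{M}\, \mathrm{d}\mathfrak{m}| \leq C_{f}\, \varrho_{M}$, with $C_{f}$ depending only on $\|f\|_{\infty}$, $\supp f$ and $\sup_{\bar{D}}|g|$.

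The heart of the proof, and the step I expect to be the main obstacle, is to show that $F_{M}$ is continuous $\mathfrak{m}$-almost everywhere — this is precisely where strong regularity of $D$ enters. From strong regularity and the strong Markov property (noting that $W_{\tau_{D}(W)} \in \partial D$ whenever $0 < \tau_{D}(W) < \infty$, and that $\tau_{D}(W) < \infty$ $\Prob_{x}^{\Sigma}$-a.s.\ since $D$ is bounded), one gets $\tau_{D}(W) = \tau_{\bar{D}}(W)$ $\Prob_{x}^{\Sigma}$-a.s.\ for every $x$. I would then verify that $w \mapsto \tau_{D}(w)$ is continuous at every continuous path $w$ with $\tau_{D}(w) = \tau_{\bar{D}}(w) < \infty$: if $w^{k} \to w$ in $D([0, \infty), \mathbb{R}^{d})$, then (as $w$ is continuous) $w^{k} \to w$ uniformly on compacts, and using $w^{k}_{t} \in D$ for $t < \tau_{D}(w^{k})$ while $w^{k}_{\tau_{D}(w^{k})} \in \overline{D^{\mathrm{c}}} = D^{\mathrm{c}}$ one obtains $\liminf_{k} \tau_{D}(w^{k}) \geq \tau_{D}(w)$, whereas picking, for $\varepsilon > 0$, some $s \in [\tau_{D}(w), \tau_{D}(w) + \varepsilon)$ with $w_{s} \in (\bar{D})^{\mathrm{c}}$ forces $w^{k}_{s} \in (\bar{D})^{\mathrm{c}} \subset D^{\mathrm{c}}$ for $k$ large, so $\limsup_{k} \tau_{D}(w^{k}) \leq \tau_{D}(w)$. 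Hence $\tau_{D}(w^{k}) \to \tau_{D}(w)$, and $F_{M}(w^{k}) \to F_{M}(w)$ follows by uniform continuity of $g$ on a compact neighbourhood of $\{w_{t} : t \leq M\}$. Since $\mathfrak{m}$ is carried by continuous paths satisfying $\tau_{D} = \tau_{\bar{D}}$, the functional $F_{M}$ is $\mathfrak{m}$-a.e.\ continuous; being a bounded functional of $w|_{[0, M]}$, Theorem~\ref{thm:QFCLT:RG:initial} applied with $T = M$, together with the Portmanteau theorem, yields $\int F_{M}\, \mathrm{d}\mathfrak{m}_{n}^{\omega} \to \int F_{M}\, \mathrm{d}\mathfrak{m}$ for $\prob_{0}$-a.e.\ $\omega$.

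Combining the last two steps, $\limsup_{n} |\int F\, \mathrm{d}\mathfrak{m}_{n}^{\omega} - \int F\, \mathrm{d}\mathfrak{m}| \leq C_{f}(\delta_{M}(\omega) + \varrho_{M})$ for every $M$, and letting $M \to \infty$ gives \eqref{eq:FCLT:Poisson} for $\prob_{0}$-a.e.\ $\omega$. To make the exceptional null set independent of $f$ and $g$, I would observe that, for a fixed bounded support, both sides of \eqref{eq:FCLT:Poisson} are Lipschitz in $f$ and in $g|_{\bar{D}}$ with respect to the supremum norm, with a constant controlled by $1 + \Gamma(\omega) < \infty$; it therefore suffices to prove the convergence along a countable family dense in the separable spaces $C(K)$ ($K \supset \overline{\supp f}$ compact) and $C(\bar{D})$, and then pass to the limit. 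The remaining points — the ergodic-theoretic input behind the bound on $\Norm{\nu^{\omega}}{q, \cdot}$ and the classical Brownian exit-time estimate — are routine.
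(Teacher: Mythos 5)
Your proposal is correct and follows essentially the same route as the paper's proof: truncate the time integral, use strong regularity together with the semicontinuity of exit times to get $\mathfrak{m}$-a.e.\ continuity of the truncated functional (the content of Lemmas~\ref{lem:u:lsc} and~\ref{lem:strong:regular:consequence}), apply Theorem~\ref{thm:QFCLT:RG:initial} with the Portmanteau/mapping theorem, and control the tails via Corollary~\ref{cor:exit}, the Markov property and the ergodic theorem. The only deviations are cosmetic: you obtain the tail bound through geometric tails and second moments where the paper uses the one-step estimate $\Mean_z^{\omega}[(\tau_{D}(X^{n})-k)_{+}] \leq k^{-1}\bigl(\max_{z}\Mean_z^{\omega}[\tau_{D}(X^{n})]\bigr)^{2}$, and you secure the universal null set by a density argument where the paper simply inherits it from Theorem~\ref{thm:QFCLT:RG:initial}.
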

Before we prove Theorem~\ref{thm:FCLT:Poisson} we first provide two preparatory lemmas.
\begin{lemma}
  \label{lem:u:lsc}
  Let $d \geq 1$ and $D \subset \mathbb{R}^{d}$ be open. Then, the map $\tau_{\bar{D}}\colon D([0, \infty), \mathbb{R}^{d}) \to [0, \infty]$ is upper semi-continuous, and the map $\tau_{D}\colon C([0, \infty), \mathbb{R}^{d}) \to [0, \infty]$ is lower semi-continuous. In particular, $\tau_{D}$ is continuous at any $w \in C([0, \infty), \mathbb{R}^{d})$ for which $\tau_{D}(w) = \tau_{\bar{D}}(w) < \infty$.
\end{lemma}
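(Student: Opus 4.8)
The plan is to reduce both assertions to two elementary topological facts — first-entrance times into \emph{open} sets are upper semi-continuous, while first-entrance times into \emph{closed} sets are lower semi-continuous — taking some care with the Skorokhod time-changes in the càdlàg setting. Throughout we use that both $D([0,\infty),\mathbb{R}^{d})$ and $C([0,\infty),\mathbb{R}^{d})$ are metrizable and $[0,\infty]$ is a compact metric space, so that sequential semi-continuity is equivalent to semi-continuity.

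For the upper semi-continuity of $\tau_{\bar D}$, I would write $\tau_{\bar D}(w) = \inf\{t \geq 0 : w_{t} \in \bar D^{\mathrm c}\}$ and recall that $\bar D^{\mathrm c}$ is open. Let $w^{n} \to w$ in $D([0,\infty),\mathbb{R}^{d})$ with time-changes $\lambda^{n} \in \Lambda_{\infty}$ as in the definition of $J_{1}$-convergence. If $\tau_{\bar D}(w) = \infty$ there is nothing to prove, so assume it is finite and fix $\varepsilon > 0$; by definition of the infimum there is $s \in [\tau_{\bar D}(w), \tau_{\bar D}(w) + \varepsilon)$ with $w_{s} \in \bar D^{\mathrm c}$. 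Since $(w^{n} \circ \lambda^{n})(s) \to w_{s}$ and $\bar D^{\mathrm c}$ is open, $w^{n}_{\lambda^{n}_{s}} \in \bar D^{\mathrm c}$ for all large $n$, hence $\tau_{\bar D}(w^{n}) \leq \lambda^{n}_{s}$; combined with $\sup_{t}|\lambda^{n}_{t} - t| \to 0$ this gives $\limsup_{n} \tau_{\bar D}(w^{n}) \leq s < \tau_{\bar D}(w) + \varepsilon$, and letting $\varepsilon \downarrow 0$ proves the claim. (The point of evaluating $w^{n}$ at $\lambda^{n}_{s}$ rather than at $s$ is precisely that this works even if $s$ is a discontinuity point of $w$.)

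For the lower semi-continuity of $\tau_{D}$ on $C([0,\infty),\mathbb{R}^{d})$, I would write $\tau_{D}(w) = \inf\{t \geq 0 : w_{t} \in D^{\mathrm c}\}$ with $D^{\mathrm c}$ closed, and use that on $C$ the $J_{1}$-topology agrees with locally uniform convergence, so that $w^{n} \to w$ means $\|w^{n} - w\|_{[0,T]} \to 0$ for every $T$. If $D = \mathbb{R}^{d}$ then $\tau_{D} \equiv \infty$ and there is nothing to show; otherwise, for any $b \in [0,\tau_{D}(w))$ the image $w([0,b])$ is a compact subset of the open set $D$, so $\delta \ldef \dist\bigl(w([0,b]),\, D^{\mathrm c}\bigr) > 0$. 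Once $n$ is large enough that $\|w^{n} - w\|_{[0,b]} < \delta$ we get $w^{n}_{t} \in D$ for all $t \in [0,b]$, hence $\tau_{D}(w^{n}) \geq b$; therefore $\liminf_{n} \tau_{D}(w^{n}) \geq b$, and taking the supremum over $b < \tau_{D}(w)$ yields $\liminf_{n} \tau_{D}(w^{n}) \geq \tau_{D}(w)$ (this includes the trivial case $\tau_{D}(w) = 0$). Finally, for the ``in particular'' statement I would note that $\bar D^{\mathrm c} \subseteq D^{\mathrm c}$ forces $\tau_{D} \leq \tau_{\bar D}$ pointwise; so if $w \in C([0,\infty),\mathbb{R}^{d})$ satisfies $\tau_{D}(w) = \tau_{\bar D}(w) < \infty$ and $w^{n} \to w$ in $C$ (hence also in $D([0,\infty),\mathbb{R}^{d})$), then $\limsup_{n} \tau_{D}(w^{n}) \leq \limsup_{n} \tau_{\bar D}(w^{n}) \leq \tau_{\bar D}(w) = \tau_{D}(w)$ by the first part while $\liminf_{n} \tau_{D}(w^{n}) \geq \tau_{D}(w)$ by the second, so $\tau_{D}(w^{n}) \to \tau_{D}(w)$.

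The only genuinely delicate point is the Skorokhod bookkeeping in the upper semi-continuity step: evaluating along $\lambda^{n}_{s}$ and using $\sup_{t}|\lambda^{n}_{t} - t| \to 0$ to pass back to $s$. Everything else is a routine application of openness/closedness together with compactness of the continuous image $w([0,b])$, so I expect no further obstacles.
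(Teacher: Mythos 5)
Your proof is correct and, for the upper semi-continuity of $\tau_{\bar{D}}$, takes essentially the same route as the paper: evaluate the time-changed paths $w^{n}\circ\lambda^{n}$ at a time $s$ slightly beyond $\tau_{\bar{D}}(w)$ at which $w_{s}\in\bar{D}^{\mathrm{c}}$, use openness of $\bar{D}^{\mathrm{c}}$, and remove the time change via $\sup_{t}|\lambda^{n}_{t}-t|\to 0$ (your choice of $s$ directly from the definition of the infimum is, if anything, slightly cleaner than the paper's appeal to right-continuity). For the lower semi-continuity the difference is only cosmetic: the paper proves closedness of the sublevel sets $\{\tau_{D}\leq T\}$ by extracting a convergent subsequence of the exit times $\tau_{D}(w^{n})$ and using continuity of $w$, whereas you bound $\liminf_{n}\tau_{D}(w^{n})$ directly through the positive distance of the compact image $w([0,b])$ from the closed set $D^{\mathrm{c}}$; both are routine, and your sandwich argument for the ``in particular'' statement is exactly what the paper leaves implicit.
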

\begin{proof}
  We first show that the map $\tau_{\bar{D}}$ on $D([0, \infty), \mathbb{R}^{d})$ is upper semi-continuous. For any given $w \in D([0, \infty), \mathbb{R}^{d})$, let $(w^{n})_{n \in \mathbb{N}}$ be a sequence in $D([0, \infty), \mathbb{R}^{d})$ that converges to $w$. Since the upper semi-continuity of $\tau_{\bar{D}}$ at $w$ is trivially satisfied if $\tau_{\bar{D}}(w) = \infty$, we may assume that $t_{0} \ldef \tau_{\bar{D}}(w) < \infty$. Then, the right-continuity of $w$ implies that $w_{t_{0} + \varepsilon} \in \bar{D}^{\mathrm{c}}$ for any sufficiently small $\varepsilon > 0$. Since $\bar{D}^{\mathrm{c}}$ is open, there exists $\delta \in (0, \varepsilon]$ such that the open ball, $B_{\delta}(w_{t_{0} + \varepsilon})$, with centre $w_{t_{0} + \varepsilon}$ and radius $\delta$ is contained in $\bar{D}^{\mathrm{c}}$. In view of the characterisation of convergence in $D([0, \infty), \mathbb{R}^{d})$, there exists $(\lambda^{n})_{n \in \mathbb{N}} \subset \Lambda_{\infty}$ and $N_{\delta} \in \mathbb{N}$ such that, for any $n \geq N_{\delta}$,
  \begin{align*}
    \bigl| \lambda_{t_{0} + \varepsilon}^{n} - (t_{0} + \varepsilon) \bigr|
    \;\leq\;
    \frac{\delta}{2}
    \qquad \text{and} \qquad
    \bigl| w^{n} \circ \lambda_{t_{0} + \varepsilon}^{n} - w_{t_{0} + \varepsilon} \bigr|_{2}
    \;\leq\;
    \frac{\delta}{2}.
  \end{align*}
  As a consequence, $w^{n} \circ \lambda_{t_{0} + \varepsilon}^{n} \in B_{\delta}(w_{t_{0} + \varepsilon}) \subset \bar{D}^{\mathrm{c}}$ which implies that
  \begin{align*}
    \tau_{\bar{D}}(w^{n})
    \;\leq\;
    \lambda_{t_{0} + \varepsilon}^{n}
    \;\leq\;
    t_{0} + \varepsilon + \delta/2
    \;\leq\;
    \tau_{\bar{D}}(w) + 2\varepsilon
  \end{align*}
  for any $n \geq N_{\delta}$. Since $\varepsilon > 0$ was arbitrarily chosen, the upper semi-continuity of $\tau_{\bar{D}}$ at $w$ follows.
  
  In order to prove the lower semi-continuity of the map $\tau_{D}$ on $C([0, \infty), \mathbb{R}^{d})$ it suffices to show that, for any $T > 0$, the sub-level sets
  \begin{align*}
    A_{T}
    \;\ldef\;
    \bigl\{ w \in C([0, \infty), \mathbb{R}^{d}) : \tau_{D}(w) \leq T \bigr\}
  \end{align*}
  are closed. For this purpose, let $(w^{n})_{n \in \mathbb{N}}$ be a sequence in $A_{T}$ that converges to some $w \in C([0, \infty), \mathbb{R}^{d})$. Set $t_{n} \ldef \tau_{D}(w^{n})$ for any $n \in \mathbb{N}$. Since $D^{\mathrm{c}}$ is closed, $\tau_{D}(w^{n}) \leq T$ and $w^{n}$ is right-continuous,  we have that $w_{t_{n}}^{n} \in D^{\mathrm{c}}$ for any $n \in \mathbb{N}$. Further, as $(t_{n})_{n \in \mathbb{N}} \subset [0, T]$, there exists a sub-sequence $(t_{n_{k}})_{k \in \mathbb{N}}$ that converges to some $t \in [0, T]$. By continuity of $w$, it follows that
  \begin{align*}
    \limsup_{k \to \infty} \bigl| w_{t} - w_{t_{n_{k}}}^{n_{k}} \bigr|_{2}
    \;\leq\;
    \limsup_{k \to \infty} \Bigl( \bigl| w_{t} - w_{t_{n_{k}}} \bigr|_{2} + \bigl\| w - w^{n_{k}} \bigr\|_{[0, \infty)} \Bigr)
    \;=\;
    0.
  \end{align*}
  Thus, $w_{t} = \lim_{k \to \infty} w_{t_{n_{k}}}^{n_{k}} \in D^{\mathrm{c}}$ which implies that $\tau_{D}(w) \leq t$ and, hence, $w \in A_{T}$.
\end{proof}
\begin{lemma}
  \label{lem:strong:regular:consequence}
  Let $d \geq 2$ and $D \subset \mathbb{R}^{d}$ be a bounded, strongly regular domain. Further, let $((W_{t})_{t \geq 0}, (\Prob_{x}^{\Sigma})_{x \in \mathbb{R}^{d}})$ be a Brownian motion on $\mathbb{R}^{d}$ with non-degenerate covariance matrix $\Sigma^{2} \ldef \Sigma \cdot \Sigma^{\mathrm{T}}$. Then,
  \begin{align}
    \label{eq:strong:regular:consequence}
    \Prob_{x}^{\Sigma}\bigl[ \tau_{D}(W) = \tau_{\bar{D}}(W) \bigr] \;=\; 1,
    \qquad \forall\, x \in \mathbb{R}^{d}.
  \end{align}
\end{lemma}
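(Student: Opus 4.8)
The plan is to exploit the strong Markov property of $W$ at the first exit time $\sigma \ldef \tau_{D}(W)$. Since $D \subseteq \bar{D}$ gives $\bar{D}^{\mathrm c} \subseteq D^{\mathrm c}$ and hence $\tau_{D}(W) \leq \tau_{\bar{D}}(W)$ deterministically, it suffices to prove the reverse inequality $\Prob_{x}^{\Sigma}$-almost surely. The case $x \notin \bar{D}$ is trivial (both exit times vanish), so I would assume $x \in \bar{D}$. First I would record that $\sigma < \infty$ $\Prob_{x}^{\Sigma}$-a.s.: as $D$ is bounded it lies in some Euclidean ball $B_{R}(0)$, and a Brownian motion with non-degenerate covariance leaves $B_{R}(0)$ in finite time almost surely, so $\sigma \leq \tau_{B_{R}(0)}(W) < \infty$.

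The second step is to locate $W_{\sigma}$, the claim being that $W_{\sigma} \in \partial D$ on the full-measure event $\{\sigma < \infty\}$. Since $D$ is open, $D^{\mathrm c}$ is closed, so $\{t \geq 0 : W_{t} \in D^{\mathrm c}\}$ is closed by continuity of $W$; hence the infimum defining $\sigma$ is attained and $W_{\sigma} \in D^{\mathrm c}$. On the other hand $W_{t} \in D$ for all $t < \sigma$, and letting $t \uparrow \sigma$ gives $W_{\sigma} \in \bar{D}$ by continuity. Therefore $W_{\sigma} \in \bar{D} \cap D^{\mathrm c} = \partial D$.

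The third and main step is the strong Markov decomposition. The time $\sigma$ is the hitting time of the closed set $D^{\mathrm c}$, hence a stopping time for the usual augmentation of the natural filtration of $W$, and because $W_{t} \in D \subseteq \bar{D}$ for every $t < \sigma$ no exit from $\bar{D}$ can occur before $\sigma$, so that $\tau_{\bar{D}}(W) = \sigma + \tau_{\bar{D}}\bigl((W_{\sigma+t})_{t \geq 0}\bigr)$. By the strong Markov property, conditionally on $\mathcal{F}_{\sigma}$ the shifted process $(W_{\sigma+t})_{t \geq 0}$ is, on $\{\sigma < \infty\}$, a Brownian motion with covariance $\Sigma^{2}$ started from $W_{\sigma}$; since $D$ is strongly regular and $W_{\sigma} \in \partial D$, we get $\Prob_{W_{\sigma}}^{\Sigma}[\tau_{\bar{D}}(W) = 0] = 1$, and taking expectations shows that the second summand above vanishes $\Prob_{x}^{\Sigma}$-a.s. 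Hence $\tau_{\bar{D}}(W) = \sigma = \tau_{D}(W)$ almost surely, which is \eqref{eq:strong:regular:consequence}.

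The hard part is really only bookkeeping: one must check that $\sigma$ is a genuine stopping time (so the strong Markov property applies), that $W_{\sigma}$ lands on $\partial D$ rather than merely in $\bar{D}$ (this is where openness of $D$ is used), and that one may legitimately invoke strong regularity at the \emph{random} point $W_{\sigma}$ — which is fine since it is $\partial D$-valued and strong regularity holds at every boundary point. Note that the distinction between strong regularity and ordinary regularity is exactly what is needed here: the weaker notion would only yield $\Prob_{W_{\sigma}}^{\Sigma}[\tau_{D}(W) = 0] = 1$, which does not force the shifted path to leave the \emph{closed} set $\bar{D}$ immediately.
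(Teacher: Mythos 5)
Your proposal is correct and follows essentially the same route as the paper: dispose of $x\in\bar{D}^{\mathrm{c}}$ trivially, use boundedness of $D$ and non-degeneracy of $\Sigma^{2}$ to get $\tau_{D}(W)<\infty$ a.s., and then apply the strong Markov property at $\tau_{D}(W)$ together with strong regularity at the boundary point $W_{\tau_{D}(W)}\in\partial D$ to conclude that the shifted path leaves $\bar{D}$ immediately. The only differences are cosmetic (you argue finiteness via a surrounding ball where the paper cites a lemma from Karatzas--Shreve, and you fold the case $x\in\partial D$ into the general argument rather than treating it separately).
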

\begin{proof}
  Clearly, the assertion is immediate for any $x \in \bar{D}^{\mathrm{c}}$. Further, observe that the strong regularity of $D$ implies that $\Prob_{x}^{\Sigma}[ \tau_{D}(W) = \tau_{\bar{D}}(W) = 0] = 1$ for any $x \in \partial D$. Thus, it remains to prove \eqref{eq:strong:regular:consequence} for any $x \in D$. Since $D$ is assumed to be bounded and $\Sigma^{2}$ is non-degenerate, we know from \cite[Lemma~5.7.4]{KS91} that $\Prob_{x}^{\Sigma}[\tau_{D}(W) < \infty] = 1$. Thus, by using the strong Markov property and the strong regularity of the subset $D$, we find that
  \begin{align*}
    \Prob_{x}^{\Sigma}\bigl[ \tau_{D}(W) = \tau_{\bar{D}}(W) \bigr]
    &\;=\;
    \Mean_{x}^{\Sigma}\Bigl[
      \indicator_{\{\tau_{D}(W) < \infty\}}
      \Prob_{W_{\tau_{D}(W)}}^{\Sigma}\bigl[ \tau_{\bar{D}}(W) = 0 \bigr]
    \Bigr]
    \;=\;
    1,
  \end{align*}
  which concludes the proof.
\end{proof}
\begin{proof}[Proof of Theorem~\ref{thm:FCLT:Poisson}]
  Let $D \subset \mathbb{R}^{d}$ be a strongly regular domain, and let $f\colon \mathbb{R}^{d} \to \mathbb{R}$ be a bounded continuous function with bounded support, and $g \in C(\mathbb{R}^{d}, \mathbb{R})$. To simplify notation we further define measurable functions $F$ and $(F_{k})_{k \in \mathbb{N}}$ on the Skorokhod space $D([0, \infty), \mathbb{R}^{d})$ by
  \begin{align*}
    F(w) \;\ldef\; \int_{0}^{\tau_{D}(w)} g(w_{t})\, \mathrm{d}t
    \qquad \text{and} \qquad
    F_{k}(w) \;\ldef\; \int_{0}^{\tau_{D}(w) \wedge k} g(w_{t})\, \mathrm{d}t.
  \end{align*}
  Then,
  \begin{align*}
    &\Biggl|
      \frac{1}{n^{d}} \mspace{-6mu}\sum_{z \in \mathcal{C}_{\infty}(\omega)}\mspace{-6mu}
      f(z/n)\,
      \Mean_{z}^{\omega}\bigl[ F(X^{n}) \bigr]
      -
      \prob\bigl[ 0 \in \mathcal{C}_{\infty} \bigr]\,
      \int_{D}
        f(x)\,
        \Mean_{x}^{\Sigma}\bigl[ F(W) \bigr]
      \mathrm{d}x
    \Biggr|
    \\[.5ex]
    &\mspace{32mu}\leq\;
    \Biggl|
      \frac{1}{n^{d}} \mspace{-6mu}\sum_{z \in \mathcal{C}_{\infty}(\omega)}\mspace{-6mu}
      f(z/n)\,
      \Mean_{z}^{\omega}\bigl[ F_{k}(X^{n}) \bigr]
      -
      \prob\bigl[ 0 \in \mathcal{C}_{\infty} \bigr]\,
      \int_{D}
        f(x)\,
        \Mean_{x}^{\Sigma}\bigl[ F_{k}(W) \bigr]\,
      \mathrm{d}x
    \Biggr|
    \\
    &\mspace{72mu}+\;
    \frac{1}{n^{d}} \mspace{-6mu}\sum_{z \in \mathcal{C}_{\infty}(\omega)}\mspace{-6mu}
    |f(z/n)|\,
    \Mean_{z}^{\omega}\biggl[
      \int_{\tau_{D}(X^{n}) \wedge k}^{\tau_{D}(X^{n})}
        \bigl| g\bigl( X_{t}^{n} \bigr) \bigr|\,
      \mathrm{d}t
    \biggr]
    \\
    &\mspace{72mu}+\;
    \int_{D}
      |f(x)|\,
      \Mean_{x}^{\Sigma}\biggl[
        \int_{\tau_{D}(W) \wedge k}^{\tau_{D}(W)}
          \bigl| g\bigl( W_{t} \bigr) \bigr|\,
        \mathrm{d}t
      \biggr]\,
    \mathrm{d}x
    \\[.5ex]
    &\mspace{32mu}\rdef\;
    I_{1}(n, k, \omega) + I_{2}(n, k, \omega) + I_{3}(k).
  \end{align*}
  We start with proving that, for any $k \in \mathbb{N}$ and $\prob_{0}$-a.e.\ $\omega$, the term $I_{1}(n, k, \omega)$ vanishes in the limit as $n \to \infty$. Clearly, due to the truncation of the integral in the definition of $F_{k}$ at level $k \in \mathbb{N}$ and the fact that $g$ is continuous and $D$ is bounded, it follows that $|F_{k}(w)| \leq k \max_{x \in \bar{D}} |g(x)| < \infty$ for any $w \in D([0, \infty), \mathbb{R}^{d})$. Moreover, in view of Lemma~\ref{lem:u:lsc} and the continuity of $g$, it follows that the map $F_{k}$ is continuous for any $w \in C([0, \infty), \mathbb{R}^{d})$ such that $\tau_{D}(w) = \tau_{\bar{D}}(w)$. By Lemma~\ref{lem:strong:regular:consequence} we therefore get that
  \begin{align}
    \label{eq:continuity:F_k}
    \Prob_{x}^{\Sigma}\bigl[
      \bigl\{F_{k} \text{ is not continuous at } W\bigr\}
    \bigr]
    \;=\;
    0,
    \qquad \forall\, x \in D,
  \end{align}
  as a consequence of the Portemanteau theorem, cf.~\cite[Theorem~2.7]{Bi99}. Note that $\Mean_{x}^{\Sigma}\bigl[F(W)\bigr] = 0$ for all $x \in D^{\mathrm{c}}$. Then, the weak convergence established in Theorem~\ref{thm:QFCLT:RG:initial} together with \eqref{eq:continuity:F_k} implies that, $\prob_{0}$-a.s.,
  \begin{align*}
    \lim_{n \to \infty} I_{1}(n, k, \omega) \;=\; 0,
    \qquad \forall\, k \in \mathbb{N}.
  \end{align*}
  In particular, the null set does not depend on $f$ and $g$.
  
  Next, let us address the term $I_{2}$. Since $g$ is continuous and $D$ is bounded, we get that, for any $k, n \in \mathbb{N}$ and $\prob_{0}$-a.e.\ $\omega$,
  \begin{align*}
    I_{2}(n, k, \omega)
    \;\leq\;
    \biggl(\max_{x \in \bar{D}} |g(x)|\,\biggr)\,
    \frac{1}{n^{d}} \mspace{-6mu}\sum_{z \in \mathcal{C}_{\infty}(\omega)}\mspace{-6mu}
    |f(z/n)|\,
    \Mean_{z}^{\omega}\bigl[
      \bigl( \tau_{D}(X^{n})-k \bigr)\, \indicator_{\{\tau_{D}(X^{n}) > k\}}
    \bigr].
  \end{align*}
  By Assumption~\ref{ass:cluster} (cf.\ Remark~\ref{rem:ass_cluster}-(ii)), there exists $r_{D} \in (0, \infty)$ such that, for any $\omega \in \Omega_{\mathrm{reg}} \cap \Omega_{0}^{*}$ and $n \geq \max\{N_{0}(\omega)/r_{D}, 1\}$, we have that $nD \cap \mathcal{C}_{\infty}(\omega) \subset B^{\omega}(0, C_{\mathrm{d}} r_{D} n)$. By using the Markov property, we get that, for any $z \in nD \cap \mathcal{C}_{\infty}(\omega)$,
  \begin{align*}
    &\Mean_{z}^{\omega}\bigl[
      \bigl( \tau_{D}(X^{n})-k \bigr)\, \indicator_{\{\tau_{D}(X^{n}) > k\}}
    \bigr]
    \\
    &\mspace{36mu}=\;
    \Mean_{z}^{\omega}\Bigl[
      \indicator_{\{\tau_{D}(X^{n}) > k\}}\,
      \Mean_{X_{k n^{2}}}^{\omega}\bigl[\tau_{D}(X^{n})\bigr]
    \Bigr]
    \;\leq\;
    \frac{1}{k}
    \biggl(
      \max_{z \in n D \cap \mathcal{C}_{\infty}(\omega)}
      \Mean_{z}^{\omega}\bigl[ \tau_{D}(X^{n})\bigr]
    \biggr)^{\!\!2}.
  \end{align*}
  Further, by Corollary~\ref{cor:exit} we also know that, for any $\omega \in \Omega_{\mathrm{reg}} \cap \Omega_{0}^{*}$ and $n \in \mathbb{N}$ such that $\lfloor C_{\mathrm{d}} r_{D} n \rfloor \geq N_{0}(\omega)$,
  \begin{align}\label{eq:mean:exit:D}
    &\max_{z \in n D \cap \mathcal{C}_{\infty}(\omega) } \!
    \Mean_{z}^{\omega}\bigl[ \tau_{D}(X^{n}) \bigr]
    \nonumber\\[.5ex]
    &\mspace{36mu}\leq\;  
    n^{-2}\mspace{-6mu} \max_{z \in B^{\omega}(0, C_{\mathrm{d}} r_{D} n)}
    \Mean_{z}^{\omega}\bigl[ \tau_{B^{\omega}(0, C_{\mathrm{d}} r_{D} n)}(X) \bigr]     
    \;\leq\;
    C_{4} (C_{\mathrm{V}} r_{D})^{2}\, \Norm{\nu^{\omega}}{q, B^{\omega}(0, C_{\mathrm{d}} r_{D} n)}.
  \end{align}
  Since the volume regularity of $B^{\omega}(0, C_{\mathrm{d}} r_{D} n)$ together with an application of the ergodic theorem yields that
  \begin{align*}
    \limsup_{n \to \infty} \Norm{\nu^{\omega}}{q, B^{\omega}(0, C_{\mathrm{d}} r_{D} n)}
    \;\leq\;
    \frac{(2 C_{\mathrm{d}} r_{D})^{d}}{C_{\mathrm{V}}}\, \mean_{0}\bigl[\nu(0)^{q}\bigr]^{1/q},
    \qquad \prob\text{-a.s.}
  \end{align*}
  we obtain that there exists $c \in (0, \infty)$ such that, $\prob$-a.s, for any $k \in \mathbb{N}$,
  \begin{align*}
    \limsup_{n \to \infty} I_{2}(n, k, \omega)
    \;\leq\;
    \frac{c}{k}\,
    \biggl(\max_{x \in \bar{D}} |g(x)|\,\biggr)\, \int_{D} |f(x)|\, \mathrm{d}x\,
    \mean_{0}\bigl[ \nu(0)^{q} \bigr]^{2/q}.
  \end{align*}
  Thus, $\limsup_{k \to \infty} \limsup_{n \to \infty} I_{2}(n, k, \omega) = 0$ for $\prob$-a.e.\ $\omega$.

  Finally, we address the term $I_{3}$. By \cite[Lemma~5.7.4]{KS91} we have $\Mean_{x}^{\Sigma}\bigl[ \tau_{D}(W) \bigr] < \infty$ so that, for any $x \in D$,
  \begin{align*}
    \Mean_{x}^{\Sigma}\biggl[
      \int_{\tau_{D}(W) \wedge k}^{\tau_{D}(W)}
        \bigl| g\bigl( W_{t} \bigr) \bigr|\,
      \mathrm{d}t
    \biggr]
    \;\leq\;
    \Bigl(\max_{x \in \bar{D}} |g(x)| \Bigr)\,
    \Mean_{x}^{\Sigma}\bigl[ \tau_{D}(W) \indicator_{\{\tau_{D}(W) > k\}} \bigr]
    \;\searrow\;
    0
  \end{align*}
  as $k \to \infty$. Thus, an application of the monotone convergence theorem yields
  \begin{align*}
    \limsup_{k \to \infty} I_{3}(k)
    \;\leq\;
    \biggl(\max_{x \in \bar{D}} |g(x)| \biggr)
    \limsup_{k \to \infty}
    \int_{D}
      |f(x)|\,
      \Mean_{x}^{\Sigma}\bigl[
        \tau_{D}(W) \indicator_{\{\tau_{D}(W) > k\}}
      \bigr]\,
    \mathrm{d}x
    \;=\;
    0,
  \end{align*}
  which completes the proof.
\end{proof}

\section{Scaling limit of the inhomogeneous discrete Gaussian free field}
\label{sec:scaling_limitCGF}
In this section we will show Theorem~\ref{thm:scalingCGF_rg}. First, we apply the extended version of the central limit theorem in Theorem~\ref{thm:FCLT:Poisson} to show the convergence of the rescaled field integrated against suitable test functions, see Theorem~\ref{thm:scalingCGF_rg:fdd}. 
Afterwards, in order to conclude, we establish tightness in Proposition~\ref{prop:DGFF:tightness}.
\begin{theorem}\label{thm:scalingCGF_rg:fdd}
  Under the assumptions of Theorem~\ref{thm:scalingCGF_rg},
  for $\prob_{0}$-a.e.\ $\omega$, under $\probPhi^{\omega}$,
  \begin{align*}
    \Phi_{n}^{D}(f)
    \underset{n \to \infty}{\overset{\text{law}}{\;\longrightarrow\;}}
    \mathcal{N}(0, \sigma^{2}_{\Sigma}(f)),
  \end{align*}
  for any bounded, measurable function $f\colon D \to \mathbb{R}$, where $\sigma_{\Sigma}^{2}(f)$ is defined as in \eqref{def:var_cont_CGF}.
\end{theorem}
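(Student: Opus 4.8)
The plan is to use that $\Phi_n^D(f)$ is a \emph{centred Gaussian} random variable under $\probPhi^{\omega}$: indeed, by Definition~\ref{def:inhomogeneous:DGFF} it equals the finite linear combination $n^{d/2-1}\sum_{z}b_n(z)\varphi^{nD}_z$ with $b_n(z)\ldef\int_D f(x)\,\indicator_{\{\lfloor nx\rfloor=z\}}\,\md x$, so convergence in law to $\mathcal{N}(0,\sigma_\Sigma^2(f))$ is equivalent to $\varGFF^{\omega}[\Phi_n^D(f)]\to\sigma_\Sigma^2(f)$. By \eqref{eq:covField} the variance is the positive semi-definite quadratic form $Q_n^\omega(b_n^f,b_n^f)$, where $Q_n^\omega(a,b)\ldef n^{d-2}\sum_{u,v}a(u)b(v)g^\omega_{nD}(u,v)$. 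I would first reduce to $f\in C_c(D)$: since $\sum_v g^\omega_{nD}(u,v)=\Mean_u^\omega[\tau_{nD}(X)]$, Corollary~\ref{cor:exit} together with Remark~\ref{rem:ass_cluster}-(ii) and the ergodic theorem gives, exactly as in \eqref{eq:mean:exit:D}, a deterministic $C_5$ and (for $\prob_0$-a.e.\ $\omega$) an $N_1(\omega)$ with $\sum_v g^\omega_{nD}(u,v)\le C_5 n^2$ for $u\in nD\cap\mathcal{C}_\infty(\omega)$, $n\ge N_1(\omega)$; combined with the Cauchy--Schwarz bound $|b_n^f(u)|^2\le n^{-d}\int_D f(x)^2\indicator_{\{\lfloor nx\rfloor=u\}}\,\md x$ and $g^\omega_{nD}\ge0$ this yields $Q_n^\omega(b_n^f,b_n^f)\le C_5\lVert f\rVert_{L^2(D)}^2$ uniformly in $n$. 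Cauchy--Schwarz for the form $Q_n^\omega$ and bilinearity ($b_n^f-b_n^g=b_n^{f-g}$) then give a Lipschitz bound $|Q_n^\omega(b_n^f,b_n^f)-Q_n^\omega(b_n^g,b_n^g)|\le C_5\lVert f-g\rVert_{L^2(D)}(\lVert f\rVert_{L^2(D)}+\lVert g\rVert_{L^2(D)})$, and the corresponding bound for $\sigma_\Sigma^2$ follows because $g^\Sigma_D$ defines a bounded operator on $L^2(D)$ (Schur test, using $\int_D g^\Sigma_D(x,y)\,\md y=\Mean_x^\Sigma[\tau_D(W)]<\infty$). Density of $C_c(D)$ in $L^2(D)$ then reduces everything to $f\in C_c(D)$.

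For $f\in C_c(D)$ I would rewrite the variance as an integral functional of the rescaled walk. Substituting $t=n^2s$ in $g^\omega_{nD}(u,v)=\int_0^\infty\Prob_u^\omega[X_t=v,\,t<\tau_{nD}(X)]\,\md t$ yields $g^\omega_{nD}(u,v)=n^2\Mean_u^\omega[\int_0^{\tau_D(X^n)}\indicator_{\{X_s^n=v/n\}}\,\md s]$, and carrying out the $v$-sum (using $\sum_v b_n^f(v)\indicator_{\{X_s^n=v/n\}}=b_n^f(nX_s^n)$) gives $\varGFF^{\omega}[\Phi_n^D(f)]=n^{-d}\sum_u c_n(u)\,\Mean_u^\omega[\int_0^{\tau_D(X^n)}c_n(nX_s^n)\,\md s]$ with $c_n(w)\ldef n^d b_n^f(w)$. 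The decisive point is that, since $\supp f$ is compactly contained in $D$, for $n$ beyond a \emph{deterministic} threshold every cube $n^{-1}(w+[0,1)^d)$ that meets $\supp f$ lies inside $D$; hence $c_n(w)$ is an average of $f$ over a cube of diameter $\sqrt d/n$ containing $w/n$, and $\sup_w|c_n(w)-f(w/n)|\le\rho_f(\sqrt d/n)=:\varepsilon_n\to0$ with $\rho_f$ the modulus of continuity of $f$. Replacing $c_n$ by $f$ both inside the time integral and in the outer weight then costs only $O(\varepsilon_n)$: the error is bounded by $\varepsilon_n$ times the volume bound $|nD\cap\mathbb{Z}^d|\le Cn^d$ times $\lVert f\rVert_\infty$ times the exit-time bound $\Mean_u^\omega[\tau_D(X^n)]=n^{-2}\Mean_u^\omega[\tau_{nD}(X)]\le C_5$ for $u\in nD\cap\mathcal{C}_\infty(\omega)$, $n$ large. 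Thus $\varGFF^{\omega}[\Phi_n^D(f)]-n^{-d}\sum_u f(u/n)\,\Mean_u^\omega[\int_0^{\tau_D(X^n)}f(X_s^n)\,\md s]\to0$.

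To finish I would apply Theorem~\ref{thm:FCLT:Poisson} with the test functions $f$ \emph{and} $g$ both equal to $f$ (extended by zero, which is continuous on $\mathbb{R}^d$ with bounded support because $\supp f$ is compactly contained in $D$): $\prob_0$-a.s.\ and with an exceptional set not depending on $f$, $n^{-d}\sum_u f(u/n)\Mean_u^\omega[\int_0^{\tau_D(X^n)}f(X_s^n)\,\md s]\to\prob[0\in\mathcal{C}_\infty]\int_D f(x)\,\Mean_x^\Sigma[\int_0^{\tau_D(W)}f(W_t)\,\md t]\,\md x$. By Tonelli and \eqref{eq:Green_killed_cont}, $\Mean_x^\Sigma[\int_0^{\tau_D(W)}f(W_t)\,\md t]=\int_D f(y)\,g^\Sigma_D(x,y)\,\md y$, so the limit equals $\prob[0\in\mathcal{C}_\infty]\int_{D\times D}f(x)f(y)g^\Sigma_D(x,y)\,\md x\,\md y=\sigma_\Sigma^2(f)$ by \eqref{def:var_cont_CGF}. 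Together with the reduction above this gives $\varGFF^{\omega}[\Phi_n^D(f)]\to\sigma_\Sigma^2(f)$ for all $f\in C_c(D)$ on a single $\prob_0$-full set, and then for all bounded measurable $f$ by the $L^2$-Lipschitz estimate of the first step, which completes the proof.

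The step I expect to be the main obstacle is the passage from the double vertex-sum $\sum_{u,v}b_n(u)b_n(v)g^\omega_{nD}(u,v)$ to a quantity covered by Theorem~\ref{thm:FCLT:Poisson}: the Green's function carries the pointwise indicator $\indicator_{\{X_s^n=v/n\}}$, which is not a continuous functional of the path, so the extended QFCLT cannot be invoked directly — one must first collapse the $v$-sum against the mollifying weights $b_n(v)$ and then justify replacing the lattice function $c_n$ by the continuous $f$. This replacement is clean only because $\supp f$ is compactly contained in $D$, which is precisely why the reduction to $f\in C_c(D)$ is essential; treating general bounded measurable $f$ directly would require controlling boundary-layer terms on which $c_n$ and $f$ need not be close.
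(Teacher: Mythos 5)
Your argument is correct, and its core coincides with the paper's proof: both reduce the statement to convergence of the variance of the Gaussian variable $\Phi_{n}^{D}(f)$, rewrite $\varGFF^{\omega}\bigl[\Phi_{n}^{D}(f)\bigr]$ as $n^{-d}\sum_{z}(\text{weight at }z)\,\Mean_{z}^{\omega}\bigl[\int_{0}^{\tau_{D}(X^{n})}(\cdots)\,\mathrm{d}t\bigr]$, invoke Theorem~\ref{thm:FCLT:Poisson} (whose exceptional set is independent of the test functions), and control the remaining errors with the exit-time bound \eqref{eq:mean:exit:D} from Corollary~\ref{cor:exit} together with the ergodic theorem. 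Where you genuinely differ is the treatment of general bounded measurable $f$: the paper keeps $f$ fixed and approximates it by a continuous $f_{\varepsilon}$ via Lebesgue differentiation, Lusin and Egorov, estimating the resulting error terms directly inside the discrete and continuum quadratic forms; you instead derive a uniform-in-$n$ Schur-type bound $|\varGFF^{\omega}[\Phi_{n}^{D}(f)]|\leq C\,\Norm{f}{}^{2}_{L^{2}(D)}$ from the same exit-time estimate (using $\sum_{v}g_{nD}^{\omega}(u,v)=\Mean_{u}^{\omega}[\tau_{nD}(X)]$ and symmetry of the kernel), obtain $L^{2}$-Lipschitz continuity of both quadratic forms by Cauchy--Schwarz for positive semi-definite forms, and conclude by density of $C_{c}(D)$ in $L^{2}(D)$; for $f\in C_{c}(D)$ the mollified weights $c_{n}$ converge to $f$ uniformly by uniform continuity and the deterministic threshold ensuring the relevant cubes lie in $D$, so no Lusin--Egorov step is needed. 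Your route buys a cleaner, more functional-analytic approximation layer (and a reusable a priori bound on the forms), while the paper's route works pointwise with the given $f$ and avoids introducing the $L^{2}$-operator estimates; the probabilistic input (extended QFCLT, exit-time bounds, the identity $\Mean_{x}^{\Sigma}[\int_{0}^{\tau_{D}(W)}f(W_{t})\,\mathrm{d}t]=\int_{D}f(y)\,g_{D}^{\Sigma}(x,y)\,\mathrm{d}y$) is identical, and all steps you use are justified by the paper's auxiliary results.
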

\begin{proof}
  Let $D \subset \mathbb{R}^{d}$ be a bounded, strongly regular domain and $f\colon D \to \mathbb{R}$ be a bounded, measurable function, and recall that the linear functional $\Phi_{n}^{D}(f)$ is defined as in \eqref{eq:def:functional:DGFF}. Since the random variable $\Phi_{n}^{D}(f)$ is Gaussian with mean zero and variance
  \begin{align*}
    \varGFF^{\omega}\bigl[\Phi_{n}^{D}(f)\bigr]
    \;=\;
    n^{d-2} \int_{D \times D} f(y)\, f(y')\, g^{\omega}_{n D}([n y], [n y'])\, \mathrm{d}y\, \mathrm{d}y',
  \end{align*}
  the assertion follows once we have shown that, $\prob_{0}$-a.s.,
  \begin{align}
    \label{eq:var:convergence:GFF}
    \lim_{n \to \infty} \varGFF^{\omega}\bigl[\Phi_{n}^{D}(f)\bigr]
    \;=\;
    \sigma_{\Sigma}^{2}(f).
  \end{align}
  In order to show \eqref{eq:var:convergence:GFF}, we extend $f$ to a function from $\mathbb{R}^{d}$ to $\mathbb{R}$ that vanishes outside of $D$, also denoted by $f$ with a slight abuse of notation, and we then define
  \begin{align*}
    f_{n}(x)
    \;\ldef\;
    n^{d}\, \int_{x + (-1/(2n), 1/(2n)]^{d}} f(y)\, \mathrm{d}y,
    \qquad x \in \mathbb{R}^{d}, \quad n \in \mathbb{N}.
  \end{align*}
  Then,
  \begin{align*}
    \varGFF^{\omega}\bigl[\Phi_{n}^{D}(f)\bigr]
    &\;=\;
    \frac{1}{n^{d+2}} \mspace{-6mu}\sum_{z, z' \in \mathcal{C}_{\infty}(\omega)} \mspace{-6mu}
    f_{n}(z/n)\, g_{nD}^{\omega}(z, z')\, f_{n}(z'/n).
  \end{align*}
  Since $f$ is locally integrable an application of Lebesgue–Besicovitch's Differentiation Theorem, cf.\ \cite[Theorem~1.32]{EG15}, yields that $f_{n}(x) \to f(x)$ as $n\to \infty$ for Lebesgue-almost every $x \in \mathbb{R}^{d}$. Moreover, by applying both Lusin's and Egorov's theorem, cf.\ \cite[Theorem~1.13, 1.14 and 1.16]{EG15} we have that, for any $\varepsilon > 0$, there exist a compact set $K_{\varepsilon} \subset D$ and a continuous function $f_{\varepsilon}\colon \mathbb{R}^{d} \to \mathbb{R}$ such that
  \begin{align}
    \label{eq:Egorov:Lusin}
    \lim_{n \to \infty} \max_{x \in K_{\varepsilon}} | f_{n}(x) - f(x) | \;=\; 0,
    \quad
    f \;=\; f_{\varepsilon} \quad \text{on } K_{\varepsilon},
    \quad \text{and} \quad
    |D \setminus K_{\varepsilon}| \;<\; \varepsilon,
  \end{align}
  where, for any measurable $A \subset \mathbb{R}^{d}$, we write $|A|$ to denote the Lebesgue measure of the subset $A$. In particular, we may choose $f_{\varepsilon}$ such that $\| f_{\varepsilon} \|_{\infty} \leq \| f \|_{\infty}$. To simplify notation further, we define
  \begin{align*}
    F_{\varepsilon}\colon D([0, \infty), \mathbb{R}^{d}) \to \mathbb{R},
    \quad
    w
    \;\longmapsto\;
    F_{\varepsilon}(w)
    \;\ldef\; \int_{0}^{\tau_{D}(w)} f_{\varepsilon}(w_{t})\, \mathrm{d}t.
  \end{align*}
  Recall that $g^{\omega}_{n D}(z, z') = 0$ whenever $z \in (\mathcal{C}_{\infty}(\omega) \cap nD)^{\mathrm{c}}$ or $z' \in (\mathcal{C}_{\infty}(\omega) \cap nD)^{\mathrm{c}}$. Thus, after rescaling the time by $n^{2}$, we obtain that
  \begin{align*}
    &\frac{1}{n^{d+2}} \mspace{-6mu}\sum_{z, z' \in \mathcal{C}_{\infty}(\omega)} \mspace{-6mu}
    f_{\varepsilon}(z/n)\, g_{nD}^{\omega}(z, z')\, f_{\varepsilon}(z'/n)
    \;=\;
    \frac{1}{n^{d}} \mspace{-6mu}\sum_{z \in \mathcal{C}_{\infty}(\omega)} \mspace{-6mu}
    f_{\varepsilon}(z/n)\,
    \Mean_{z}^{\omega}\bigl[ F_{\varepsilon}(X_{t}^{n}) \bigr].
  \end{align*}
  Since $X$ is a reversible Markov process it follows that $g_{nD}^{\omega}(z, z') = g_{nD}^{\omega}(z', z)$ for all $z, z' \in \mathbb{Z}^{d}$. By the same reasoning we also have that the Dirichlet Green kernel, $g_{D}^{\Sigma}$, is symmetric in its arguments. This yields the decomposition
  \begin{align*}
    &
    \bigl|
      \varGFF^{\omega}\bigl[\Phi_{n}^{D}(f)\bigr] - \sigma_{\Sigma}^{2}(f)
    \bigr|
    \\[.5ex]
    &\mspace{36mu}\leq\;
    \biggl|
      \frac{1}{n^{d}} \mspace{-6mu}\sum_{z \in \mathcal{C}_{\infty}(\omega)} \mspace{-6mu}
      f_{\varepsilon}(z/n)\, \Mean_{z}^{\omega}\bigl[F_{\varepsilon}(X^{n})\bigr]
      -
      \prob\bigl[0 \in \mathcal{C}_{\infty}\bigr]\,
      \int_{\mathbb{R}^{d}}
        f_{\varepsilon}(x)\, \Mean_{x}^{\Sigma}\bigl[F_{\varepsilon}(W)\bigr]\,
      \mathrm{d}x
    \biggr|
    \\
    &\mspace{72mu}+\,
    \frac{1}{n^{d+2}} \mspace{-6mu}\sum_{z \in \mathcal{C}_{\infty}(\omega)} \mspace{-6mu}
    \bigl| f_{n}(z/n) - f_{\varepsilon}(z/n) \bigr|\, g_{nD}^{\omega}(z, z')\,
    \bigl( \bigl| f_{n}(z'/n) \bigr| + \bigl| f_{\varepsilon}(z'/n) \bigr|\bigr)
    \\
    &\mspace{72mu}+\,
    \prob\bigl[0 \in \mathcal{C}_{\infty}\bigr]\,
    \int_{D \times D}
      \bigl|f(y) - f_{\varepsilon}(y)\bigr|\, g_{D}^{\Sigma}(y, y')\,
      \bigl|f(y') + f_{\varepsilon}(y')\bigr|
    \mathrm{d}y\, \mathrm{d}y'
    \\[.5ex]
    &\mspace{36mu}\rdef\;
    I_{1}(\omega, n, \varepsilon) + I_{2}(\omega, n, \varepsilon) + I_{3}(\varepsilon).
  \end{align*}
  Now, by Theorem~\ref{thm:FCLT:Poisson}, it follows that $\lim_{n \to \infty} I_{1}(\omega, n, \varepsilon) = 0$ for $\prob_{0}$-a.e.\ $\omega$ and any $\varepsilon > 0$. Let us stress that the corresponding null set is independent of $\varepsilon$.

  Next, we address the term $I_{2}$. Since $D$ is assumed to be bounded, there exists $r_{D} \in (0, \infty)$ such that, for any $\omega \in \Omega_{\mathrm{reg}} \cap \Omega_{0}^{*}$, $n D \cap \mathcal{C}_{\infty}(\omega) \in [-r_{D} n, r_{D} n]^{d}$. Thus, by Assumption~\ref{ass:cluster} (cf.\ Remark~\ref{rem:ass_cluster}-(ii)) and Corollary~\ref{cor:exit}, we find that there exists $c \in (0, \infty)$ such that, for any $\omega \in \Omega_{\mathrm{reg}} \cap \Omega_{0}^{*}$ and $n \geq \max\{N_{0}(\omega)/r_{D}, 1\}$,
  \begin{align*}
    \max_{z \in nD \cap \mathcal{C}_{\infty}(\omega)}
    \Mean_{z}^{\omega}\bigl[ \tau_{D}(X^{n}) \bigr]
    \overset{\mspace{-9mu}\eqref{eq:mean:exit:D}\mspace{-9mu}}{\;\leq\;}
    c\, \Norm{\nu^{\omega}}{q, B^{\omega}(0, C_{\mathrm{d}} r_{D} n)}.
  \end{align*}
  Since $\|f_{n}\|_{\infty} \leq \|f\|_{\infty}$, we obtain that, for any $\omega \in \Omega_{\mathrm{reg}} \cap \Omega_{0}^{*}$ and $n \geq \max\{N_{0}(\omega), 1\}$,
  \begin{align*}
    I_{2}(\omega, n, \varepsilon)
    &\;\leq\;
    2 \| f \|_{\infty} \frac{1}{n^{d}}
    \mspace{-6mu}\sum_{z \in \mathcal{C}_{\infty}(\omega)} \mspace{-6mu}
    \bigl| f_{n}(z/n) - f_{\varepsilon}(z/n) \bigr|\, \Mean_{z}^{\omega}\bigl[\tau_{D}(X^{n})\bigr]\,
    \mathbbm{1}_{\{z \in nD\}}
    \\
    &\;\leq\;
    2 c \Norm{\nu^{\omega}}{q, B^{\omega}(0, C_{\mathrm{d}} r_{D} n)} \| f \|_{\infty}\,
    \Biggl(
      \frac{1}{n^{d}}
      \mspace{-6mu}\sum_{z \in \mathcal{C}_{\infty}(\omega)} \mspace{-6mu}
      \bigl| f_{n}(z/n) - f_{\varepsilon}(z/n) \bigr|\, 
      \mathbbm{1}_{\{z/n \in D\}}
    \Biggr).
  \end{align*}
  The remaining term in brackets can be further estimated from above by
  \begin{align*}
    \biggl(
      \max_{x \in K_{\varepsilon}}\,
      \bigl| f_{n}(x) - f_{\varepsilon}(x) \bigr|
    \biggr)\, \frac{|nD|}{n^{d}}
    +
    2\|f\|_{\infty}\, \frac{1}{n^{d}}
    \sum_{z \in \mathbb{Z}^{d}} \mathbbm{1}_{\{z/n \in D \setminus K_{\varepsilon} \}}.
  \end{align*}
  Thus, in view of \eqref{eq:Egorov:Lusin}, an application of the the ergodic theorem and the volume regularity of $B^{\omega}(0, C_{\mathrm{d}} r_{D} n)$ yields that, $\prob_{0}$-a.s.,
  \begin{align*}
    \limsup_{\varepsilon \downarrow 0}\,
    \limsup_{n \to \infty} I_{2}(\omega, n, \varepsilon)
    \;\leq\;
    \limsup_{\varepsilon \downarrow 0}\,
    4 c \frac{(2C_{\mathrm{d}} r_{D})^{d}}{C_{\mathrm{V}}}\,
    \mean_{0}\bigl[\nu(0)^{q}\bigr]^{1/q}\, \|f\|_{\infty}^{2}\, \varepsilon
    \;=\;
    0.
  \end{align*}
  Finally, we address the term $I_{3}$. Again, in view of \eqref{eq:Egorov:Lusin}, we obtain that
  \begin{align*}
    I_{3}(\varepsilon)
    &\;\leq\;
    4 \|f\|_{\infty}^{2}\,
    \biggl(\sup_{x \in D} \Mean_{x}^{\Sigma}\bigl[\tau_{D}(W)\bigr] \biggr)\,
    |D \setminus K_{\varepsilon}|
    \;\leq\;
    c \|f\|_{\infty}^{2}\, \varepsilon.
  \end{align*}
  Thus, $\limsup_{\varepsilon \downarrow 0} I_{3}(\varepsilon) = 0$. This completes the proof.
\end{proof}
\begin{prop}[Tightness]
  \label{prop:DGFF:tightness}
  Let $d \geq 2$ and $D \subset \mathbb{R}^{d}$ be a bounded, strongly regular domain. Suppose there exist $\theta \in (0, 1)$ such that Assumptions~\ref{ass:law} and \ref{ass:cluster} hold. Moreover, assume that $\mean\bigl[\nu^{\omega}(0)^{d'/2} \indicator_{\{0 \in \mathcal{C}_{\infty}\}}\bigr] < \infty$ with $d' = (d-\theta)/(1-\theta)$. Then, $\prob_{0}$-a.s., the sequence $(\Phi_{n}^{D})_{n \in \mathbb{N}}$ is tight in $H^{-s}(D)$ for any $s > d/2$.
\end{prop}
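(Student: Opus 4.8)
The plan is to treat $\Phi_n^D$, for fixed $\omega$ and $n$, as a random element of $L^2(D)$: under $\probPhi^\omega$ the field $\varphi^{nD}$ is supported on the finite set $nD\cap\mathcal{C}_\infty(\omega)$, so $x\mapsto n^{d/2-1}\varphi^{nD}_{\lfloor nx\rfloor}\indicator_{\{\lfloor nx\rfloor\in\mathcal{C}_\infty(\omega)\}}$ is $\probPhi^\omega$-a.s.\ a bounded measurable function on $D$, and hence $\Phi_n^D\in L^2(D)\subset H^{-s}(D)$ for every $s\geq 0$. Fix $s>d/2$ and choose $s_0\in(d/2,s)$. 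Since $D$ is bounded, the inclusion $H_0^s(D)\hookrightarrow H_0^{s_0}(D)$ is compact (Rellich), hence so is its adjoint $H^{-s_0}(D)\hookrightarrow H^{-s}(D)$; therefore, by Markov's inequality together with this compact embedding, it suffices to establish the uniform bound
\[
  \sup_{n\in\mathbb{N}}\ \meanPhi^\omega\!\bigl[\,\|\Phi_n^D\|_{H^{-s_0}(D)}^2\,\bigr]\;<\;\infty,\qquad\prob_0\text{-a.s.}
\]

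To evaluate the left-hand side I would take an orthonormal basis $(\phi_k)_{k\in\mathbb{N}}$ of $L^2(D)$ diagonalising the inclusion $H_0^{s_0}(D)\hookrightarrow L^2(D)$, so that $(\phi_k/\mu_k)_k$ is an orthonormal basis of $H_0^{s_0}(D)$ with $\mu_k\uparrow\infty$; since this inclusion is Hilbert--Schmidt whenever $s_0>d/2$ (the Bessel potential kernel is then square-integrable on $\mathbb{R}^d$), one has $\sum_k\mu_k^{-2}<\infty$, and for $u\in L^2(D)$ the identity $\|u\|_{H^{-s_0}(D)}^2=\sum_k\mu_k^{-2}\langle u,\phi_k\rangle_{L^2(D)}^2$ holds. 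As $\Phi_n^D(\phi_k)$ is centred Gaussian under $\probPhi^\omega$, Tonelli's theorem gives $\meanPhi^\omega[\|\Phi_n^D\|_{H^{-s_0}(D)}^2]=\sum_k\mu_k^{-2}\,\varGFF^\omega[\Phi_n^D(\phi_k)]$, so everything reduces to a uniform variance estimate $\varGFF^\omega[\Phi_n^D(f)]\leq C(\omega)\|f\|_{L^2(D)}^2$ holding $\prob_0$-a.s.\ with a null set independent of $f$.

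For the variance estimate I would start from the identity (obtained in the proof of Theorem~\ref{thm:scalingCGF_rg:fdd})
\[
  \varGFF^\omega\!\bigl[\Phi_n^D(f)\bigr]=\frac{1}{n^{d+2}}\sum_{z,z'\in\mathcal{C}_\infty(\omega)}f_n(z/n)\,g_{nD}^\omega(z,z')\,f_n(z'/n),\qquad f_n(x)\ldef n^d\!\!\int_{x+(-1/(2n),1/(2n)]^d}\!\!\! f(y)\,\mathrm{d}y,
\]
and bound it by $n^{-(d+2)}\,\|g_{nD}^\omega\|_{\ell^2(\mathcal{C}_\infty(\omega))\to\ell^2(\mathcal{C}_\infty(\omega))}\sum_{z\in nD\cap\mathcal{C}_\infty(\omega)}f_n(z/n)^2$. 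Two points remain. First, since $g_{nD}^\omega$ is a symmetric non-negative kernel, Schur's test gives $\|g_{nD}^\omega\|_{\ell^2\to\ell^2}\leq\max_{z\in nD\cap\mathcal{C}_\infty(\omega)}\sum_{z'}g_{nD}^\omega(z,z')=\max_{z\in nD\cap\mathcal{C}_\infty(\omega)}\Mean_z^\omega[\tau_{nD}(X)]$; using Remark~\ref{rem:ass_cluster}-(ii) to get $nD\cap\mathcal{C}_\infty(\omega)\subset B^\omega(0,C_{\mathrm{d}}r_D n)$ for large $n$ (whence $\tau_{nD}(X)\leq\tau_{B^\omega(0,C_{\mathrm{d}}r_D n)}(X)$), the exit-time estimate \eqref{eq:mean:exit:D} (i.e.\ Corollary~\ref{cor:exit}, with a parameter $q$ slightly above $d'/2$, finite for $\nu^\omega(0)$ by the assumed moment bound) together with the ergodic theorem applied to $\Norm{\nu^\omega}{q,B^\omega(0,C_{\mathrm{d}}r_D n)}$ yields $\|g_{nD}^\omega\|_{\ell^2\to\ell^2}\leq C(\omega)\,n^2$ for all $n$, $\prob_0$-a.s. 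Second, Cauchy--Schwarz on each cube $x+(-1/(2n),1/(2n)]^d$ gives $f_n(z/n)^2\leq n^d\int_{z/n+(-1/(2n),1/(2n)]^d}|f|^2$, and summing over $z\in\mathbb{Z}^d$ (the cubes tile $\mathbb{R}^d$, and $f$ is extended by $0$ outside $D$) gives $\sum_{z\in nD\cap\mathcal{C}_\infty(\omega)}f_n(z/n)^2\leq n^d\|f\|_{L^2(D)}^2$. Combining, $\varGFF^\omega[\Phi_n^D(f)]\leq n^{-(d+2)}\cdot C(\omega)n^2\cdot n^d\|f\|_{L^2(D)}^2=C(\omega)\|f\|_{L^2(D)}^2$.

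The heart of the argument — and the only step that is not soft functional analysis — is the almost-sure, uniform-in-$n$ operator bound $\|g_{nD}^\omega\|_{\ell^2\to\ell^2}\leq C(\omega)n^2$, equivalently the uniform control of the mean exit time $\max_z\Mean_z^\omega[\tau_{nD}(X)]=O(n^2)$; this is exactly what Corollary~\ref{cor:exit} provides, after passing via Remark~\ref{rem:ass_cluster}-(ii) to a graph ball containing $nD$ and invoking the ergodic theorem for $\nu^\omega$ (alternatively, one can read the same bound off the weighted Sobolev inequality of Proposition~\ref{prop:Sobolev:ineq} with the exponent $\rho$ taken close to $1$). Everything else — the eigenbasis decomposition of the $H^{-s_0}$-norm, the Hilbert--Schmidt property of $H_0^{s_0}(D)\hookrightarrow L^2(D)$ for $s_0>d/2$, and the Markov/compactness passage to tightness, performed for each $s>d/2$ — is routine, and the null set obtained in the variance estimate is independent of $s$.
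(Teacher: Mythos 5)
Your overall architecture is sound and close to the paper's: the paper also reduces tightness to a uniform-in-$n$ bound on $\meanPhi^{\omega}\bigl[\|\Phi_{n}^{D}\|_{H^{-t}(D)}^{2}\bigr]$ for some $d/2<t<s$, expands this norm in a spectral basis (it uses the Dirichlet Laplacian eigenbasis with Weyl bounds, you use the singular-value decomposition of the embedding $H_{0}^{s_{0}}(D)\hookrightarrow L^{2}(D)$ and its Hilbert--Schmidt property; these are interchangeable), and concludes by Rellich compactness plus Markov's inequality. The reduction to a variance bound of the form $\varGFF^{\omega}[\Phi_{n}^{D}(f)]\leq C(\omega)\,\|f\|_{L^{2}(D)}^{2}$, controlled by an $\ell^{2}\to\ell^{2}$ bound of order $n^{2}$ on the discrete Green operator, is exactly the paper's mechanism as well.

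The genuine gap is in how you obtain that operator bound under the stated hypothesis. You bound $\|g_{nD}^{\omega}\|_{\ell^{2}\to\ell^{2}}$ by Schur's test through the maximal mean exit time, and then invoke Corollary~\ref{cor:exit} ``with a parameter $q$ slightly above $d'/2$, finite by the assumed moment bound''. But the proposition only assumes $\mean\bigl[\nu(0)^{d'/2}\indicator_{\{0\in\mathcal{C}_{\infty}\}}\bigr]<\infty$ at the exponent $d'/2$ exactly, and Corollary~\ref{cor:exit} (through the De Giorgi iteration of Proposition~\ref{prop:Max_ineq_poisson}) genuinely requires $q>(d-\theta)/(2(1-\theta))=d'/2$ strictly; a moment at $d'/2$ gives no control of $\mean\bigl[\nu(0)^{q}\bigr]$ for any such $q$, so the ergodic-theorem bound on $\Norm{\nu^{\omega}}{q, B^{\omega}(0,C_{\mathrm{d}}r_{D}n)}$ is unavailable, and your parenthetical alternative (Sobolev inequality with $\rho$ close to $1$) does not yield the pointwise exit-time bound either, since the maximal inequality needs $\rho>1$. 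The repair, which is the paper's route, is to avoid the exit-time/Schur bound altogether: the Green operator is the inverse of $-\mathcal{L}^{\omega}$ with Dirichlet conditions on $nD\cap\mathcal{C}_{\infty}(\omega)$, so $\|g_{nD}^{\omega}\|_{\ell^{2}\to\ell^{2}}=1/\lambda_{1}^{\omega}$, and the principal eigenvalue is bounded below by testing the eigenfunction in the weighted Sobolev inequality of Proposition~\ref{prop:Sobolev:ineq} at $\rho=1$, i.e.\ $\lambda_{1}^{\omega}\geq\bigl(c\,\Norm{\nu^{\omega}}{d'/2, B^{\omega}(0,C_{\mathrm{d}}r_{D}n)}\,n^{2}\bigr)^{-1}$, which uses exactly the assumed $d'/2$ moment. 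Your argument as written does prove tightness under the stronger Assumption~\ref{ass:pq} (as in Theorem~\ref{thm:scalingCGF_rg}), where some $q>d'/2$ moment on $\nu$ is available, but not the proposition under its stated borderline hypothesis.
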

\begin{proof} 
  \textsc{Step 1.} In a first step, we show that, for $\prob_{0}$-a.e.\ $\omega$,
  \begin{align}
    \label{eq:uniform:boundedness}
    \sup_{n \in \mathbb{N}}
    \meanPhi^{\omega}\Bigl[ \| \Phi_{n}^{D} \|_{H^{-s}(D)}^{2}\Bigr]
    \;<\;
    \infty.
  \end{align}
  By \cite[Theorem~6.3.1]{Da95}, for any bounded domain $D \subset \mathbb{R}^{d}$, the negative Dirichlet Laplacian, $-\Delta$, on $D$ has an empty essential spectrum and a compact resolvent. Moreover, the eigenvalues $0 < \lambda_{1}^{D} \leq \lambda_{2}^{D} \leq \ldots$, of $-\Delta$, written in increasing order and repeated according to their multiplicity, satisfy
  \begin{align}
    \label{eq:Dirichlet:eigenvalue:comparison}
    c_{1}\, k^{2/d} \;\leq\; \lambda_{k}^{D} \;\leq\; c_{2}\, k^{2/d},
    \qquad \forall\, k \in \mathbb{N}.
  \end{align}
  for some $0 < c_{1} \leq c_{2} < \infty$. Furthermore, by \cite[Corollary~4.2.3]{Da95} there exists a complete orthonormal set of eigenfunctions $(e_{k}^{D})_{k \in \mathbb{N}}$ of $-\Delta$ in $L^{2}(D)$ with corresponding eigenvalues $\lambda_{k}^{D}$. Hence, in view of \eqref{eq:scalar_product:Sobolev_space}, $\bigl((1-\Delta)^{-s/2}e_{k}^{D}\bigr)_{k \in \mathbb{N}}$ is an orthonormal basis of $H^{s}_0(D)$ and
  \begin{align*}
    \|\Phi_{n}^{D}\|_{H^{-s}(D)}^{2}
    \;\ldef\;
    \sum_{k=1}^{\infty}
    \frac{1}{(1+\lambda_{k}^{D})^{s}}\, \Phi_{n}^{D}(e_{k}^{D})^{2},
  \end{align*}
  cf.\ \cite[Proof of Lemma~5.5]{GOS01}, \cite[Lemma~4.1]{CvG20} and \cite[p.~1872f.]{CR24} (note that the exponent in the definition of the Sobolev space and its dual given in \cite{CR24} differs from ours by a factor of $2$). 

  For any $\omega \in \Omega_{\mathrm{reg}} \cap \Omega_{0}^{*}$, let $(\lambda_{1}^{\omega}, u_{1})$ be the smallest solution of the (discrete) eigenvalue problem
  \begin{align}\label{eq:eigenvalue:discrete}
    \left\{
      \begin{array}{rcll}
        -\mathcal{L}^{\omega} u_{1} & \!\!=\!\! &
        \lambda_{1}^{\omega} u_{1}
        &\quad \text{on } nD \cap \mathcal{C}_{\infty}(\omega),
        \\[1ex]
        u_{1} &\!\!=\!\!& 0 &\quad \text{on } (n D \cap \mathcal{C}_{\infty}(\omega))^{\mathrm{c}}.   
      \end{array}
    \right.
  \end{align}
  By the Perron-Frobenius theorem we know that $\lambda_{1}^{\omega} > 0$ and is simple. Note that, by choosing $r_{D} \in (0, \infty)$ such that $D \subset [-r_{D}, r_{D}]^{d}$, we obtain, for any $\omega \in \Omega_{\mathrm{reg}} \cap \Omega_{0}^{*}$ and $n \geq \max\{ N_{0}(\omega)/r_{D}, 1 \}$, that $n D \cap \mathcal{C}_{\infty}(\omega) \subset B^{\omega}(0, C_{\mathrm{d}} r_{D} n)$. Hence, by the weighted Sobolev inequality in Proposition~\ref{prop:Sobolev:ineq},
  \begin{align*}
    &\Norm{u_{1}}{2, B^{\omega}(0, C_{\mathrm{d}} r_{D} n)}^{2}
    \;\leq\;
    C_{\mathrm{S}}\, (C_{\mathrm{d}} r_{D})^{2}\,
    \Norm{\nu^{\omega}}{d'/2, B^{\omega}(0, C_{\mathrm{d}} r_{D} n)}\,
    \lambda_{1}^{\omega} n^{2}\,
    \Norm{u_{1}}{2, B^{\omega}(0, C_{\mathrm{d}} r_{D} n)}^{2},
  \end{align*}
  which implies that
  \begin{align}
    \label{eq:Dirichlet:eigenvalue:estimate}
    \lambda_{1}^{\omega}
    \;\geq\;
    \Bigl(
      c\, \Norm{\nu^{\omega}}{d'/2, B^{\omega}(0, C_{\mathrm{d}} r_{D} n)} n^{2}
    \Bigr)^{\!-1}.
  \end{align}
  Further, recall that, $g_{n D}^{\omega}(z, z') = 0$ for any $z$ or $z'$ in $(nD \cap \mathcal{C}_{\infty}(\omega))^{\mathrm{c}}$. Hence, for $\prob_{0}$-a.e.\ $\omega$ and any $k \in \mathbb{N}$,
  \begin{align*}
    \varGFF^{\omega}\bigl[\Phi_{n}^{D}(e_{k}^{D})\bigr]
    &\;=\;
    n^{d-2}
    \int_{D \times D}
      e_{k}^{D}(y)\, e_{k}^{D}(y')\, g_{nD}^{\omega}(\lfloor ny \rfloor, \lfloor ny' \rfloor])\,
    \mathrm{d}y\, \mathrm{d}y'
    \\[.5ex]
    &\;=\;
    \frac{1}{n^{d+2}}\,
    \sum_{z, z' \in \mathcal{C}_{\infty}(\omega)}
    e_{k, n}^{D}(z/n)\, g_{nD}^{\omega}(z, z')\, e_{k, n}^{D}(z'/n)
    \\[.5ex]
    &\;\leq\;
    \frac{1}{\lambda_{1}^{\omega} n^{d+2}}
    \sum_{z \in \mathcal{C}_{\infty}(\omega)} e_{k, n}^{D}(z/n)^{2},
  \end{align*}
  where
  \begin{align*}
    e_{k,n}^{D}(x)
    \;\ldef\;
    n^{d} \int_{x + (-1/(2n), 1/(2n)]^{d}} e_{k, n}^{D}(y)\, \mathrm{d}y,
    \qquad x \in \mathbb{R}^{d}, \quad  k, n \in \mathbb{N}.
  \end{align*}
  Since, by Jensen's inequality,
  \begin{align*}
    \sum_{z \in \mathcal{C}_{\infty}(\omega)} e_{k, n}^{D}(z/n)^{2}
    \;\leq\;
    n^{d} \sum_{z \in \mathcal{C}_{\infty}(\omega)}
    \int_{z/n + (-1/(2n), 1/(2n)]^{d}} e_{k}^{D}(y)^{2}\, \mathrm{d}y
    \;\leq\;
    n^{d} \int_{D} e_{k}^{D}(y)^{d}\, \mathrm{d}y
  \end{align*}
  and $\int_{D} e_{k}^{D}(y)^{2}\, \mathrm{d}y = 1$ for any $k \in \mathbb{N}$, we obtain, for $\prob_{0}$-a.e.\ $\omega$,
  \begin{align*}
    \meanPhi^{\omega}\Bigl[\|\Phi_{n}^{D}\|_{H^{-s}(D)}^{2}\Bigr]
    \;\leq\;
    \frac{1}{\lambda_{1}^{\omega} n^{2}}\,
    \sum_{k=1}^{\infty} \frac{1}{(1 + \lambda_{k}^{D})^{s}}
    &\overset{\eqref{eq:Dirichlet:eigenvalue:comparison}}{\;\leq\;}
    \frac{1}{\lambda_{1}^{\omega} n^{2}}\,
    \sum_{k=1}^{\infty} \frac{1}{(1 + c_{1} k^{2/d})^{s}},
  \end{align*}
  which is finite as $s > d/2$. Combining this with \eqref{eq:Dirichlet:eigenvalue:estimate}, we get that
  \begin{align*}
    \meanPhi^{\omega}\Bigl[\|\Phi_{n}^{D}\|_{H^{-s}(D)}^{2}\Bigr]
    \;\leq\;
    c\, \Norm{\nu^{\omega}}{d'/2, B^{\omega}(0, C_{\mathrm{d}} r_{D} n)}.
  \end{align*}
  By the volume regularity (see Assumption~\ref{ass:cluster}) and the ergodic theorem, we obtain for $\prob_{0}$-a.e.\ $\omega$,
  \begin{align*}
    \limsup_{n \to \infty}
    \Norm{\nu^{\omega}}{d'/2, B^{\omega}(0, C_{\mathrm{d}} r_{D} n)}^{d'/2}
    &\;\leq\;
    \limsup_{n \to \infty} \frac{1}{C_{V}}\,
    \Norm{\nu^{\omega}}{d'/2, B(0, C_{\mathrm{d}} r_{D} n)}^{d'/2}
    \\[.5ex]
    &\;=\;
    \frac{1}{C_{V}} \mean\bigl[\nu(0)^{d'/2}\, \indicator_{\{0 \in \mathcal{C}_{\infty}\}}\bigr]
    \;<\;
    \infty,
  \end{align*}
  and \eqref{eq:uniform:boundedness} follows.
  \smallskip

  \textsc{Step 2.} In order to prove tightness, fix $s > d/2$. Then, there exists $t > d/2$ such that $s > t > d/2$. By \emph{Step 1}, for any $\varepsilon > 0$ there exists $R_{\varepsilon} \in (0, \infty)$ such that, for $\prob_{0}$-a.e.\ $\omega$,
  \begin{align*}
    \sup_{n \in \mathbb{N}}
    \probPhi^{\omega}\bigl[ \| \Phi_{n}^{D} \|_{H^{-t}}(D)^{2} > R_{\varepsilon}\bigr]
    \;\leq\;
    \frac{1}{R_{\varepsilon}}
    \sup_{n \in \mathbb{N}}
    \meanPhi^{\omega}\Bigl[ \| \Phi_{n}^{D} \|_{H^{-t}(D)}^{2}\Bigr]
    \;\leq\;
    \varepsilon.
  \end{align*}
  On the other hand, by Rellich's theorem, see \cite[Proposition~4.3.4]{Ta23}, the inclusion operator $H_{0}^{s}(d) \hookrightarrow H_{0}^{t}(D)$ is compact. This implies that, for any $R > 0$, the closed ball $\bigl\{G \in H^{-t}(D) : \| G \|_{H^{-t}(D)} \leq R\bigr\}$ is compact in $H^{-s}(D)$. Hence, we have shown tightness of $(\Phi_{n}^{D})_{n \in \mathbb{N}}$ in $H^{-s}(D)$. 
\end{proof}
\begin{proof}[Proof of Theorem~\ref{thm:scalingCGF_rg}]
  The result follows now immediately from Theorem~\ref{thm:scalingCGF_rg:fdd} and Proposition~\ref{prop:DGFF:tightness}.
\end{proof}

\section{Local limit theorem for the Green kernel} \label{sec:lclt}
\subsection{An a-priori bound on Green's functions}
In this subsection, we prove an upper bound on the Green's functions subject to Dirichlet boundary conditions that is needed for the proof of Theorem~\ref{thm:LCLT_green}. For this purpose, we first state a maximal inequality for harmonic functions.
\begin{theorem}\label{thm:max_ineq:harm}
  Let $d \geq 2$, and suppose there exists $\theta \in (0, 1)$ such that Assumptions~\ref{ass:law} and~\ref{ass:cluster}-(i) hold. Then, for any $p, q \in (1, \infty]$ satisfying $1/p + 1/q < 2(1 - \theta)/(d - \theta)$, there exist $C_{5} \equiv C_{5}(\theta, d, p, q) \in [1, \infty)$ and $\kappa \equiv \kappa(\theta, d, p, q) \in (1, \infty)$ such that for any $\omega \in \Omega_{\mathrm{reg}} \cap \Omega_{0}^{*}$, $R \geq 1$ and $n \geq 2(N_{0}(\omega) \vee R^{\kappa_{1}})$, where $\kappa_{1} \equiv \kappa_{1}(\theta, d, p, q) \in (0, \infty)$ is as in Proposition~\ref{prop:Max_ineq_poisson}, the following holds. For any $x \in B^{\omega}(0,R n)$ and any function $u\colon \mathcal{C}_{\infty}(\omega) \to [0, \infty)$ such that $\mathcal{L}^{\omega} u = 0$ on $B^{\omega}(x, n)$ and any $1/2 \leq \sigma' < \sigma \leq 1$,
    \begin{align} \label{eq:EHI:max_est2}
      \max_{z \in B^{\omega}(x, \sigma' n)} u(z)
      \;\leq\;   
      C_{5}
      \Biggl(
        \frac{%
          1 \vee \Norm{\mu^{\omega}}{p, B^{\omega}(x, n)}\, \Norm{\nu^{\omega}}{q, B^{\omega}(x, n)}
        }
        {(\sigma - \sigma')^{2}}
      \Biggr)^{\!\!\kappa}\; 
      \Norm{u}{1, B^{\omega}(x, \sigma n)}.
    \end{align}
\end{theorem}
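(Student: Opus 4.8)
The estimate \eqref{eq:EHI:max_est2} is a local $L^{1}$-to-$L^{\infty}$ (``elliptic Harnack-type'') bound for non-negative $\mathcal{L}^{\omega}$-harmonic functions, and the plan is to obtain it by a Moser iteration adapted to the degenerate conductances, essentially as for harmonic functions in \cite{ACS21} and along the lines of the Moser scheme used for the corrector in \cite{ADS15}. The starting observation is that, since $u \ge 0$ and $\mathcal{L}^{\omega} u = 0$ on $B \ldef B^{\omega}(x, n)$, the identity $\mathcal{E}^{\omega}(u, \phi) = 0$ holds for every finitely supported $\phi$ (by the discrete Gauss--Green formula, cf.\ \eqref{eq:DF:L}, together with $\mathcal{L}^{\omega}u = 0$ on $\supp\phi \subset B$); testing against $\phi = \eta^{2} u^{2\beta - 1}$ with $\beta \ge 1$ and a suitable cut-off $\eta$ is the engine of the iteration (the convexity of $t \mapsto t^{\alpha}$, $\alpha \ge 1$, also shows $u^{\alpha}$ is a subsolution, which is the conceptual reason this works). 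The gain of integrability per step will be $\alpha \ldef \rho/p_{*} > 1$, where $\rho$ is as in \eqref{eq:def:rho} and $p_{*} = p/(p-1)$; recall from the discussion after Proposition~\ref{prop:Max_ineq_poisson} that $\rho/p_{*} > 1$ precisely under Assumption~\ref{ass:pq}.

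For the one-step estimate, fix $1/2 \le \sigma' < \sigma \le 1$ and $\beta \ge 1$, and choose $\eta\colon \mathcal{C}_{\infty}(\omega) \to [0, 1]$ depending only on $d^{\omega}(x, \cdot)$, equal to $1$ on $B^{\omega}(x, \sigma' n)$, supported in $B^{\omega}(x, \sigma n)$, with $|(\nabla\eta)(e)| \le c\,((\sigma - \sigma') n)^{-1}$ for all edges $e$. Expanding $\nabla(\eta^{2} u^{2\beta-1})$ by the discrete Leibniz rule and using the elementary numerical inequalities $(a^{2\beta-1} - b^{2\beta-1})(a - b) \ge c\,\beta^{-1}(a^{\beta} - b^{\beta})^{2}$ and $|a - b|\,(a \vee b)^{\beta-1} \le |a^{\beta} - b^{\beta}|$ for $a, b \ge 0$ (these substitute for the continuum chain rule), together with Young's inequality to absorb the cross term, one arrives at a Caccioppoli-type estimate
\begin{align*}
  \mathcal{E}^{\omega}(\eta u^{\beta}, \eta u^{\beta})
  \;\le\;
  \frac{c\, \beta^{2}}{((\sigma - \sigma') n)^{2}}\,
  \sum_{y \in B^{\omega}(x, \sigma n)} \mu^{\omega}(y)\, u(y)^{2\beta},
\end{align*}
where the weight $\mu^{\omega}$ enters through $\omega(e) \le \mu^{\omega}(e^{+}) \wedge \mu^{\omega}(e^{-})$. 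Bounding the sum by Hölder's inequality with exponents $p, p_{*}$, inserting the weighted Sobolev inequality of Proposition~\ref{prop:Sobolev:ineq} for $\eta u^{\beta}$ on $B^{\omega}(x, \sigma n)$ — applicable because $n \ge 2(N_{0}(\omega) \vee R^{\kappa_{1}})$ forces every ball $B^{\omega}(x, \sigma'' n)$ with $\sigma'' \in [1/2, 1]$ to be large enough in the sense of that proposition — and replacing the weight norms over $B^{\omega}(x, \sigma n)$ by those over $B$ at the cost of a dimensional constant (using $|B^{\omega}(x, r)| \asymp n^{d}$ for $n/2 \le r \le n$, which follows from volume regularity and $B^{\omega}(x, r) \subset B(x, r)$), one obtains, with $A \ldef 1 \vee \bigl(\Norm{\mu^{\omega}}{p, B}\,\Norm{\nu^{\omega}}{q, B}\bigr)$,
\begin{align*}
  \Norm{u}{2\rho\beta,\, B^{\omega}(x, \sigma' n)}
  \;\le\;
  \Bigl( \frac{c\, \beta^{2}\, A}{(\sigma - \sigma')^{2}} \Bigr)^{\!1/(2\beta)}\;
  \Norm{u}{2 p_{*}\beta,\, B^{\omega}(x, \sigma n)}.
\end{align*}

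Then I would iterate. Put $\beta_{k} \ldef \alpha^{k}$ and $\sigma_{k} \ldef \sigma' + 2^{-k}(\sigma - \sigma')$, so that $2 p_{*}\beta_{k+1} = 2\rho\beta_{k}$, $\sigma_{0} = \sigma$ and $\sigma_{k} \downarrow \sigma'$. Composing the one-step bounds over $k = 0, 1, \ldots$, using $\sum_{k}\beta_{k}^{-1} < \infty$, $\prod_{k}(c\,\beta_{k}^{2}\,4^{k})^{1/(2\beta_{k})} < \infty$, and letting $k \to \infty$, gives
\begin{align*}
  \Norm{u}{\infty,\, B^{\omega}(x, \sigma' n)}
  \;\le\;
  \Bigl( \frac{c\, A}{(\sigma - \sigma')^{2}} \Bigr)^{\!\kappa_{0}}\;
  \Norm{u}{2 p_{*},\, B^{\omega}(x, \sigma n)}
\end{align*}
for $\kappa_{0} = \alpha/(2(\alpha - 1)) > 1/2$. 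To lower the exponent on the right to $1$, interpolate $\Norm{u}{2p_{*}, B'} \le \Norm{u}{\infty, B'}^{1 - 1/(2p_{*})}\,\Norm{u}{1, B'}^{1/(2p_{*})}$, apply Young's inequality to split off an $\varepsilon\,\Norm{u}{\infty, B^{\omega}(x, \sigma n)}$ term, and run the standard absorption/iteration lemma for $\sigma \mapsto \Norm{u}{\infty, B^{\omega}(x, \sigma n)}$ on $[\sigma', \sigma]$; here one uses once more that $\Norm{u}{1, B^{\omega}(x, t n)} \le c\,\Norm{u}{1, B^{\omega}(x, \sigma n)}$ for $\sigma' \le t \le \sigma$ by volume comparison, so the inhomogeneity in the lemma is the $\sigma$-independent quantity $c\,A^{\,2p_{*}\kappa_{0}}\,\Norm{u}{1, B^{\omega}(x, \sigma n)}$. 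This yields \eqref{eq:EHI:max_est2} with $\kappa = 2 p_{*}\kappa_{0} > 1$ (enlarged if convenient) and $C_{5}$ collecting all constants.

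The main obstacle I expect is the discrete bookkeeping in the Caccioppoli step: there is no exact identity for $\nabla(u^{\beta})$ on a graph, so the argument must be run with the numerical inequalities above while keeping the $\beta$-dependence of the constants polynomial — exactly what makes $\prod_{k}(\,\cdots)^{1/(2\beta_{k})}$ converge — and the cut-off ``error'' must be dominated by $\mu^{\omega}$ rather than by a uniform ellipticity constant, which is the source of the $\Norm{\mu^{\omega}}{p, B}$ factor absent from the uniformly elliptic theory. A secondary point requiring care is the matching of scales so that Proposition~\ref{prop:Sobolev:ineq} is valid on each ball $B^{\omega}(x, \sigma_{k} n)$ entering the iteration and all the volume-comparison constants are uniform in $k$; this is precisely what the lower bound $n \ge 2(N_{0}(\omega) \vee R^{\kappa_{1}})$, with $\kappa_{1}$ inherited from Proposition~\ref{prop:Max_ineq_poisson}, is engineered to guarantee.
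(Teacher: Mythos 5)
Your proposal is correct and follows essentially the same route as the paper: the paper's proof simply invokes \cite[Corollary~3.4]{ADS16} with $d$ replaced by $d'=(d-\theta')/(1-\theta')$, using the weighted Sobolev inequality of Proposition~\ref{prop:Sobolev:ineq} as the key input, and your Moser/De Giorgi-type iteration (Caccioppoli estimate with test function $\eta^{2}u^{2\beta-1}$, weighted Sobolev step, iteration, then interpolation--absorption to pass from the $\ell^{2p_{*}}$- to the $\ell^{1}$-norm) is precisely the argument behind that citation. The only point needing a small adjustment is the scale bookkeeping when invoking Proposition~\ref{prop:Sobolev:ineq} inside the iteration: rather than checking its hypotheses on each ball $B^{\omega}(x,\sigma'' n)$ (where the stated bound $n \geq 2(N_{0}(\omega)\vee R^{\kappa_{1}})$ does not literally give the required threshold with the centre viewed in $B^{\omega}(0, R' \sigma'' n)$), it is cleaner to apply the Sobolev inequality always on the fixed ball $B^{\omega}(x,n)$, in which every test function $\eta u^{\beta}$ is supported, and convert between norms on $B^{\omega}(x,n)$ and its sub-balls by volume regularity -- a constant-level fix, not a gap.
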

\begin{proof}
 This follows from \cite[Corollary~3.4]{ADS16} with $\alpha = 1$ and $d$ replaced by $d'=(d-\theta')/(1-\theta')$ for some $\theta'\in (\theta, 1)$, which requires the weighted Sobolev inequality in Proposition~\ref{prop:Sobolev:ineq} as an input. 
Note that  by choosing  $\theta' \in (\theta, 1)$ as in \eqref{eq:choice_theta_prime}, the condition on $p$ and $q$ in the statement ensures that $1/p + 1/q < 2/d'$. 
\end{proof}
\begin{coro}\label{cor:Green_apriori}
  In the setting of Theorem~\ref{thm:max_ineq:harm} there exists $C_{6} \equiv C_{6}(\theta, d, p, q) \in (0, \infty)$ such that, for any $\omega \in \Omega_{\mathrm{reg}} \cap \Omega_{0}^{*}$, $R \geq 1$, $K \in [1, R]$ and $n \geq 2(N_{0}(\omega) \vee R^{\kappa_{1}})$ the following hold. Let $x_{0} \in B^{\omega}(0, R n)$ and $x \in B^{\omega}(x_{0}, K n) \cap B^{\omega}(0, R n)$ such that $B^{\omega}(x, n) \subset B^{\omega}(x_{0}, K n)$, and $y \in B^{\omega}(x_{0}, K n) \setminus B^{\omega}(x, n)$. Then, for any $1/2 \leq \sigma' < \sigma \leq 1$ and $z \in B^{\omega}(x, \sigma' n)$,
  \begin{align} \label{eq:green_apriori}
    g^{\omega}_{B^{\omega}(x_{0}, K n)}(z, y)
    \;\leq\;   
    C_{6} K^{2}
    \Biggl(
      \frac{%
        1 \vee \Norm{\mu^{\omega}}{p, B^{\omega}(x, n)}\,
        \Norm{\nu^{\omega}}{q, B^{\omega}(x, n)}
      }
      {(\sigma - \sigma')^{2}}
    \Biggr)^{\!\!\kappa}\;
    \Norm{\nu^{\omega}}{q, B^{\omega}(x_{0}, Kn)}\,
    n^{2-d}.
  \end{align}
\end{coro}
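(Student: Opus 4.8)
The plan is to use the fact that, away from its pole, the Dirichlet Green's function $z \mapsto g^\omega_{B^\omega(x_0,Kn)}(z,y)$ is nonnegative and $\mathcal{L}^\omega$-harmonic, apply the maximal inequality for harmonic functions of Theorem~\ref{thm:max_ineq:harm} to it on the inner ball $B^\omega(x,n)$, and then control the resulting $\ell^1$-average by the mean exit time of $X$ from $B^\omega(x_0,Kn)$, using the symmetry of the Green's function together with Corollary~\ref{cor:exit}.

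Concretely, I would set $u \ldef g^\omega_{B^\omega(x_0,Kn)}(\cdot,y)$. By the probabilistic representation $u \geq 0$, and from the Poisson problem~\eqref{eq:PDE_green_killed} one has $\mathcal{L}^\omega u = -\delta_y$ on $B^\omega(x_0,Kn)$; since the hypotheses ensure $y \notin B^\omega(x,n)$ and $B^\omega(x,n) \subset B^\omega(x_0,Kn)$, the function $u$ is a nonnegative $\mathcal{L}^\omega$-harmonic function on $B^\omega(x,n)$. Because $x \in B^\omega(0,Rn)$ and $n \geq 2(N_0(\omega) \vee R^{\kappa_1})$, Theorem~\ref{thm:max_ineq:harm} then gives, for any $1/2 \leq \sigma' < \sigma \leq 1$,
\[
  \max_{z \in B^\omega(x,\sigma' n)} u(z)
  \;\leq\;
  C_5 \Biggl(\frac{1 \vee \Norm{\mu^\omega}{p,B^\omega(x,n)}\,\Norm{\nu^\omega}{q,B^\omega(x,n)}}{(\sigma-\sigma')^2}\Biggr)^{\!\kappa}\, \Norm{u}{1,B^\omega(x,\sigma n)}.
\]

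It then remains to bound $\Norm{u}{1,B^\omega(x,\sigma n)}$. By reversibility of $X$ the kernel $g^\omega_{B^\omega(x_0,Kn)}$ is symmetric and nonnegative, so using $B^\omega(x,\sigma n) \subset B^\omega(x_0,Kn)$,
\[
  \sum_{z \in B^\omega(x,\sigma n)} u(z)
  \;=\;
  \sum_{z \in B^\omega(x,\sigma n)} g^\omega_{B^\omega(x_0,Kn)}(y,z)
  \;\leq\;
  \sum_{z \in B^\omega(x_0,Kn)} g^\omega_{B^\omega(x_0,Kn)}(y,z)
  \;=\;
  \Mean_y^\omega\bigl[\tau_{B^\omega(x_0,Kn)}(X)\bigr].
\]
I would then apply Corollary~\ref{cor:exit} to the ball $B^\omega(x_0,Kn)$, viewed as $B^\omega(x_0,n')$ with $n' = Kn$ and radius parameter $R' = R/K$: here $R' \in [1,R]$ since $K \in [1,R]$, one has $x_0 \in B^\omega(0,Rn) = B^\omega(0,R'(Kn))$, $y \in B^\omega(x_0,Kn)$, and the admissibility $Kn \geq N_0(\omega) \vee (R')^{\kappa_1}$ holds because $Kn \geq n \geq 2(N_0(\omega) \vee R^{\kappa_1})$ and $(R')^{\kappa_1} \leq R^{\kappa_1}$. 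This yields $\Mean_y^\omega[\tau_{B^\omega(x_0,Kn)}(X)] \leq C_4\,\Norm{\nu^\omega}{q,B^\omega(x_0,Kn)}\,(Kn)^2$. Dividing by $|B^\omega(x,\sigma n)| \geq c_V\, n^d$ --- which follows from the volume regularity part of Assumption~\ref{ass:cluster}-(i) applied to a ball of radius of order $n$ around $x$, valid under the standing lower bound on $n$ (using $x \in B^\omega(0,Rn)$) --- I obtain $\Norm{u}{1,B^\omega(x,\sigma n)} \leq (C_4/c_V)\,K^2\,n^{2-d}\,\Norm{\nu^\omega}{q,B^\omega(x_0,Kn)}$, and substituting into the maximal inequality gives~\eqref{eq:green_apriori} with $C_6 = C_5 C_4 / c_V$.

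I do not expect a genuine obstacle: the estimate is essentially an assembly of Theorem~\ref{thm:max_ineq:harm}, the symmetry of the Green's function, and Corollary~\ref{cor:exit}. The two points that need a little care are (i) checking that the pole $y$ of the Green's function lies outside $B^\omega(x,n)$, so that $u$ is honestly harmonic there, and (ii) handling the change of scale when invoking Corollary~\ref{cor:exit}, since there the relevant ball has radius $Kn$ rather than $n$, which requires rescaling the radius parameter $R$ and checking that the resulting threshold on $N_0(\omega)$ is still met.
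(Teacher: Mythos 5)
Your proposal is correct and follows essentially the same route as the paper's proof: harmonicity of $g^{\omega}_{B^{\omega}(x_{0},Kn)}(\cdot,y)$ on $B^{\omega}(x,n)$, the maximal inequality of Theorem~\ref{thm:max_ineq:harm}, then symmetry and the probabilistic representation to bound the $\ell^{1}$-average by $\Mean_{y}^{\omega}[\tau_{B^{\omega}(x_{0},Kn)}]$, controlled via Corollary~\ref{cor:exit} at scale $Kn$ with rescaled parameter $R/K$ and the volume lower bound $|B^{\omega}(x,\sigma n)| \geq C_{\mathrm{V}} n^{d}/2^{d}$. The two points you flag (the pole lying outside $B^{\omega}(x,n)$ and the admissibility check $Kn \geq N_{0}(\omega) \vee (R/K)^{\kappa_{1}}$) are exactly the ones the paper verifies as well.
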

\begin{proof}
  Fix some $\omega \in \Omega_{\mathrm{reg}} \cap \Omega_{0}^{*}$ and $R \geq 1$. Since $y \in B^{\omega}(x_{0}, K n) \setminus B^{\omega}(x, n)$, the function $ g^{\omega}_{B^{\omega}(x_{0}, Kn)}(\cdot, y)$ is harmonic on $B^{\omega}(x, n)$. Therefore, by applying Theorem~\ref{thm:max_ineq:harm}, we obtain that, for any $n \geq 2(N_{0}(\omega) \vee R^{\kappa_{1}})$ and $1/2 \leq \sigma' < \sigma \leq 1$,
  \begin{align} \label{eq:appl_maxinequ}
    &\max_{z \in B^{\omega}(x, \sigma' n)} g^{\omega}_{B^{\omega}(x_{0}, K n)}(z, y)
    \nonumber\\[.5ex]
    &\mspace{36mu}\leq\;
    C_{5}
    \Bigg(
      \frac{
        1 \vee \Norm{\mu^{\omega}}{p, B^{\omega}(x, n)}\,
        \Norm{\nu^{\omega}}{q, B^{\omega}(x, n)}
      }{(\sigma - \sigma')^{2}}
    \Bigg)^{\!\!\kappa}\; 
    \Norm{g^{\omega}_{B^{\omega}(x_{0}, K n)}(\cdot, y)}{1, B^{\omega}(x, \sigma n)}.
  \end{align}
  Further, by using both the reversibility and the probabilistic representation of the Dirichlet Green kernel, we find that
  \begin{align*}
    \sum_{z \in B^{\omega}(x, \sigma n)}\mspace{-6mu} g^{\omega}_{B^{\omega}(x_{0}, K n)}(z, y)
    \;=\;
    \Mean_{y}^{\omega}\biggl[
      \int_{0}^{\tau_{B^{\omega}(x_{0}, K n)}} \indicator_{\{X_{t} \in B^{\omega}(x, \sigma n)\}}\, \mathrm{d}t
    \biggr]
    \;\leq\;
    \Mean_{y}^{\omega}\bigl[ \tau_{B^{\omega}(x_{0}, K n)} \bigr].
  \end{align*}
  Since the ball $B^{\omega}(x, n)$ is regular, it follows that $|B^{\omega}(x, \sigma n)| \geq C_{\mathrm{V}} (\sigma n)^{d} \geq C_{\mathrm{V}} n^{d} / 2^{d}$. Therefore, by applying the mean exit time estimate established in Corollary~\ref{cor:exit} with $n$ replace by $Rn$ and considering that $n \geq 2(N_{0}(\omega) \vee R^{\kappa_{1}})$ implies $Kn \geq N_{0} \vee (R/K)^{\kappa_{1}}$, we conclude that, for any $y \in B(x_{0}, Kn) \setminus B^{\omega}(x, n)$,
  \begin{align*}
    \Norm{g^{\omega}_{B^{\omega}(x_{0}, K n)}(\cdot, y)}{1, B^{\omega}(x, \sigma n)}
    &\;\leq\;
    \frac{2^{d}}{C_{\mathrm{V}} n^{d}}\, \Mean_{y}^{\omega}\bigl[ \tau_{B^{\omega}(x_{0}, K n)} \bigr]
    \\[.5ex]
    &\;\leq\;
    \frac{2^{d} C_{4}}{C_{\mathrm{V}}}\, K^{2}\, \Norm{\nu^{\omega}}{q, B^{\omega}(x_{0}, K n)}\, n^{2-d}.
  \end{align*}
  By combining this with \eqref{eq:appl_maxinequ} we get the claim.
\end{proof}

\subsection{H\"older regularity of Green's functions}
In this subsection we aim to establish H\"older regularity estimates for Green's functions with Dirichlet boundary conditions that are needed for the proof of Theorem~\ref{thm:LCLT_green}. Recall that, by \eqref{eq:PDE_green_killed}, for any $\omega \in \Omega_{0}^{*}$ and $y \in n D \cap \mathcal{C}_{\infty}(\omega)$, the function $\mathcal{C}_{\infty}(\omega) \ni x \mapsto g^{\omega}_{n D}(x, y)$ is $\mathcal{L}^{\omega}$-harmonic on $n D \setminus \{y\}$. Moreover, denote the oscillation of a function $u$ over a bounded subset $B$ of $\mathbb{Z}^{d}$ by 
\begin{align*}
  \osc\displaylimits_{x \in B} u(x)
  \;\ldef\;
  \sup_{x \in B} u(x) - \inf_{x \in B} u(x).
\end{align*}
\begin{theorem}[Oscillation inequality]\label{thm:osc}
  Let $d \geq 2$, and suppose there exists $\theta \in (0, 1)$ such that Assumptions~\ref{ass:law} and~\ref{ass:cluster}-(i) hold. Then, there exist $\vartheta \in (0, 1/2)$ such that for any $p, q \in (1, \infty]$ satisfying $1/p + 1/q < 2(1 - \theta)/(d - \theta)$, any $\omega \in \Omega_{\mathrm{reg}} \cap \Omega_{0}^{*}$, $R \geq 1$ and $n \geq 2^{8d} C_{\mathrm{W}} ( N_{0}(\omega) \vee R^{\kappa_{1}})$, where $\kappa_{1} \equiv \kappa_{1}(\theta, d, p, q) \in (0, \infty)$ is as in Proposition~\ref{prop:Max_ineq_poisson}, the following holds. For any $x_{0} \in B^{\omega}(0, R n)$ there exists
  \begin{align*}
    \gamma^{\omega}
    \;\equiv\;
    \gamma^{\omega}(x_{0}, n)
    \;\ldef\;
    \gamma \Bigl(
      \Norm{1 \vee \mu^{\omega}}{p, B^{\omega}(x_{0}, n)},
      \Norm{1 \vee \nu^{\omega}}{q, B^{\omega}(x_{0}, n)}
    \Bigr)
    \in
    (0, 1), 
  \end{align*}
  where $\gamma\colon [0, \infty)^2 \to (0, 1)$ is continuous and increasing in both components, such that for any positive function $u\colon \mathcal{C}_{\infty}(\omega) \to (0, \infty)$ with $\mathcal{L}^{\omega} u = 0$ on $B^{\omega}(x_{0}, n)$,
  \begin{align}\label{eq:oscillation_ineq}
    \osc\displaylimits_{B^{\omega}(x_{0}, \vartheta n)} u
    \;\leq\;
    \gamma^{\omega}\, \osc\displaylimits_{B^{\omega}(x_{0}, n)} u.
  \end{align}
\end{theorem}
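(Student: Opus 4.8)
The plan is to derive the one–step oscillation decay \eqref{eq:oscillation_ineq} from a quantitative \emph{measure-to-point} lower bound for nonnegative $\mathcal{L}^{\omega}$-harmonic functions, combined with the classical De Giorgi dichotomy. Write $M \ldef \sup_{B^{\omega}(x_{0}, n)} u$ and $m \ldef \inf_{B^{\omega}(x_{0}, n)} u$, and set $v_{1} \ldef M - u \geq 0$ and $v_{2} \ldef u - m \geq 0$; both are $\mathcal{L}^{\omega}$-harmonic on $B^{\omega}(x_{0}, n)$. Since the ball $B^{\omega}(x_{0}, n)$ is $\theta$-very regular for all $n \geq 2^{8d} C_{\mathrm{W}}(N_{0}(\omega) \vee R^{\kappa_{1}})$ (Assumption~\ref{ass:cluster}-(i), after passing to the intermediate connected sets $\mathcal{S}^{\omega}$ sandwiched between $B^{\omega}(\cdot, r)$ and $B^{\omega}(\cdot, C_{\mathrm{W}} r)$ supplied by Definition~\ref{def:regBall}), all relevant balls have volume comparable to a power of their radius. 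Hence one of the two sublevel sets, say $\{u \leq (M+m)/2\}$, occupies at least half of $B^{\omega}(x_{0}, 2\vartheta n)$; in that case $v_{1} \geq (M-m)/2$ on at least half of $B^{\omega}(x_{0}, 2\vartheta n)$, and in the complementary case the same holds for $v_{2}$.

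The heart of the matter is therefore the following estimate: if $v \geq 0$ is $\mathcal{L}^{\omega}$-harmonic on $B^{\omega}(x_{0}, n)$ and $|\{v \geq a\} \cap B^{\omega}(x_{0}, 2\vartheta n)| \geq \tfrac12 |B^{\omega}(x_{0}, 2\vartheta n)|$, then $\inf_{B^{\omega}(x_{0}, \vartheta n)} v \geq \gamma_{0}\, a$ for some explicit $\gamma_{0} \in (0, 1)$ depending only on $d'$, $p$, $q$ and, monotonically, on $\Norm{1 \vee \mu^{\omega}}{p, B^{\omega}(x_{0}, n)}$ and $\Norm{1 \vee \nu^{\omega}}{q, B^{\omega}(x_{0}, n)}$. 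I would prove this by running De Giorgi's iteration on the truncations $w_{l} \ldef (l - v)_{+}$ for $a/2 \leq l \leq a$: the Caccioppoli estimate $\mathcal{E}^{\omega}(\eta^{2} w_{l}, w_{l}) \lesssim \sum_{x} \mu^{\omega}(x)\, w_{l}(x)^{2}\, |\nabla \eta|(x)^{2}$ for a cutoff $\eta$ between nested balls feeds $\Norm{\mu^{\omega}}{p, B^{\omega}(x_{0}, n)}$ into the bound after Hölder, while the weighted Sobolev inequality of Proposition~\ref{prop:Sobolev:ineq} feeds in $\Norm{\nu^{\omega}}{q, B^{\omega}(x_{0}, n)}$ and upgrades $L^{2}$ to $L^{2\rho}$ with $\rho > 1$ (valid since the choice of $\theta'$ in \eqref{eq:choice_theta_prime} ensures $1/p + 1/q < 2/d'$). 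The factor $|\{v < l\} \cap B|/|B| \leq \tfrac12$ makes the resulting recursion superlinear, hence convergent once the initial sublevel measure has first been driven below De Giorgi's structural threshold by the standard energy/isoperimetric ``intermediate value'' lemma (again using Proposition~\ref{prop:Sobolev:ineq} and the relative isoperimetric inequality). This is precisely the elliptic regularity scheme carried out for the VSRW with ergodic degenerate conductances in \cite{ACS21}, with $d$ replaced there by $d' = (d-\theta')/(1-\theta')$, so the cleanest route is to invoke it directly, as was done with the maximal inequality in Theorem~\ref{thm:max_ineq:harm}; alternatively one reproduces it from Proposition~\ref{prop:Sobolev:ineq}, the volume bounds, and Theorem~\ref{thm:max_ineq:harm}.

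Granting this, the conclusion is immediate: in the first case $\sup_{B^{\omega}(x_{0}, \vartheta n)} u \leq M - \gamma_{0}(M-m)/2$ and $\inf_{B^{\omega}(x_{0}, \vartheta n)} u \geq m$, so $\osc_{B^{\omega}(x_{0}, \vartheta n)} u \leq (1 - \gamma_{0}/2)(M-m)$; the complementary case is symmetric. One then sets $\gamma^{\omega} \ldef 1 - \gamma_{0}/2 \in (0,1)$ and fixes $\vartheta \equiv \vartheta(d') \in (0, 1/2)$; continuity and monotonicity of $\gamma$ in its two arguments are inherited from the explicit polynomial dependence produced by the iteration. The main obstacle I anticipate is the bookkeeping needed to keep the iteration constant in the precise form $\gamma(\Norm{1 \vee \mu^{\omega}}{p, B}, \Norm{1 \vee \nu^{\omega}}{q, B})$ with the right monotonicity — making both the Caccioppoli step (which sees $\mu^{\omega}$) and the weighted Sobolev step (which sees $\nu^{\omega}$) contribute multiplicatively and \emph{uniformly} along the iteration — together with threading the geometric hypotheses of Definition~\ref{def:regBall} (the $C_{\mathrm{W}}$-comparable connected sets $\mathcal{S}^{\omega}$, the volume lower bound, and the relative isoperimetric inequality) so that every scale between $\vartheta n$ and $n$ remains regular; this is exactly what forces the lower bound $n \geq 2^{8d} C_{\mathrm{W}}(N_{0}(\omega) \vee R^{\kappa_{1}})$.
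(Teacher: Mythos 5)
Your ``cleanest route'' --- invoking \cite[Theorem~2.4]{ACS21} directly, as the paper does for Theorem~\ref{thm:max_ineq:harm} --- is indeed the paper's own proof, but your proposal omits the step that constitutes essentially all of that proof: verifying that \cite[Assumption~2.1]{ACS21} holds in the present setting. Besides the volume regularity and the Sobolev inequality \eqref{eq:sobolev} with $d$ replaced by $d'$ (the content of Proposition~\ref{prop:Sobolev:ineq}'s input), one must supply the \emph{weak Poincar\'e inequality} required by part (iii) of \cite[Assumption~2.1]{ACS21}; the paper obtains it from the relative isoperimetric inequality of Assumption~\ref{ass:cluster}-(i) via the discrete co-area formula (\cite[Lemma~3.3.3]{Sa96}, yielding \eqref{eq:PI1}) and then passes from the optimal constant $a$ to the average over a subset $\mathcal{N}$ by a triangle-inequality argument. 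Your write-up never mentions a Poincar\'e inequality, so the citation is not yet justified as stated.

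The self-contained De Giorgi scheme you sketch as an alternative has a concrete gap at its core. The dichotomy and the reduction of \eqref{eq:oscillation_ineq} to a measure-to-point lower bound are fine, but the ``standard energy/isoperimetric intermediate value lemma'' you invoke to drive the sublevel-set measure below De Giorgi's threshold is not standard here: the Dirichlet energy $\mathcal{E}^{\omega}$ is weighted by $\omega$, while the relative isoperimetric inequality of Definition~\ref{def:regBall} is unweighted (it counts open edges). Bridging the two costs a Cauchy--Schwarz with $1/\omega$, i.e.\ precisely a $\nu^{\omega}$-weighted $L^{1}$-Poincar\'e-type estimate that you would have to prove (essentially the weak Poincar\'e inequality above), and one must then track how the $\nu^{\omega}$-dependent constants accumulate over the iterated applications of the lemma so that the final constant keeps the form $\gamma\bigl(\Norm{1 \vee \mu^{\omega}}{p, B^{\omega}(x_{0}, n)}, \Norm{1 \vee \nu^{\omega}}{q, B^{\omega}(x_{0}, n)}\bigr)$ with the claimed monotonicity. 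This is exactly why the degenerate-conductance literature (\cite{ADS16, ACS21}) runs a Moser-type iteration together with a weak Poincar\'e inequality rather than De Giorgi's dichotomy, so ``alternatively one reproduces it from Proposition~\ref{prop:Sobolev:ineq}, the volume bounds, and Theorem~\ref{thm:max_ineq:harm}'' understates the missing work: the maximal inequality alone does not yield the measure-to-point estimate without that additional Poincar\'e-type input.
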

\begin{proof}
  This has been shown for more general space-time harmonic functions for operators with possibly time-dependent conductances in \cite[Theorem~2.4]{ACS21}.

  Note that our assumptions ensure that \cite[Assumption~2.1]{ACS21} is satisfied. Indeed, part (i) holds as it is included in Assumption~\ref{ass:cluster}-(i). Next, let us choose $\theta' \in (\theta, 1)$ as in the proof of Proposition~\ref{prop:Max_ineq_poisson}. Then, the Sobolev inequality in part (ii) of \cite[Assumption~2.1]{ACS21} is given by \eqref{eq:sobolev} with $d'$ as in \eqref{eq:def:rho}. The weak Poincar\'{e} inequality appearing in part (iii) of \cite[Assumption~2.1]{ACS21} follows from the relative isoperimetric inequality in Assumption~\ref{ass:cluster}-(i) by applying a discrete co-area formula. More precisely, by \cite[Lemma~3.3.3]{Sa96}, there exists a constant $C_{\mathrm{P}}\in (0, \infty)$ such that for any $\omega \in \Omega^{*}_{0}$, $x_{0} \in \mathcal{C}_{\infty}(\omega)$ and $u\colon \mathcal{C}_{\infty}(\omega) \to \mathbb{R}$,
  \begin{align}\label{eq:PI1}
    \inf_{a \in \mathbb{R}}
    \Norm{u - a}{1, B^{\omega}(x_{0}, n)}
    \;\leq\; 
    C_{\mathrm{P}}
    \frac{n}{|B^{\omega}(x_{0}, n)|}\,
    \sum_{%
      \substack{x, y \in B^{\omega}(x_{0}, C_{\mathrm{W}} n) \\ x \sim y}
    } \bigl| u(x) - u(y) \bigr|.
  \end{align}
  Further, note that by the triangle inequality, for any $a \in \mathbb{R}$ and any non-empty $\mathcal{N} \subseteq B^{\omega}(x_{0}, n)$, writing $(u)_{\mathcal{N}} \ldef |\mathcal{N}|^{-1} \sum_{x \in \mathcal{N}} u(x)$, we have
  \begin{align*}
    \Norm{u - (u)_{\mathcal{N}}}{1, B^{\omega}(x_{0}, n)}
    &\;\leq\;
    \Norm{u-a}{1, B^{\omega}(x_{0}, n)}
    \,+\,
    \bigl| (u)_{\mathcal{N}} - a \bigr|
    \\
    &\;\leq\;
    \biggl( 1 + \frac{|B^{\omega}(x_{0}, n)|}{|\mathcal{N}|} \biggr)\, \Norm{u - a}{1, B^{\omega}(x_{0}, n)},
  \end{align*}
  and the desired weak Poincar\'{e} inequality follows from \eqref{eq:PI1}.
\end{proof}
The following generalisation of the ergodic theorem will help us to control ergodic averages on scaled balls with varying centre points.
\begin{prop}\label{prop:krengel_pyke}
  Let $R > 0$ and  $\mathcal{B} \ldef \bigl\{ B : B \text{ open Euclidean ball in } [-R, R]^{d}\bigr\}$.  Suppose that Assumption~\ref{ass:law}-(i) holds.  Then, for any $f \in L^{1}(\Omega)$,
  \begin{align*}
    \lim_{n \to \infty} \sup_{B \in \mathcal{B}} \,
    \biggl|
      \frac{1}{n^{d}}\, \sum_{x \in (nB) \cap \mathbb{Z}^{d}}\mspace{-12mu} f \circ \tau_{x}
      \,-\,
      |B| \cdot \mean\bigl[ f \bigr]
    \biggr|
    \;=\;
    0, 
    \qquad \prob\text{-a.s.},
  \end{align*}
  where $|B|$ denotes the Lebesgue measure of $B$.
\end{prop}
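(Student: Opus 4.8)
The plan is to reduce the uniform statement over the family $\mathcal{B}$ of Euclidean balls to a classical multiparameter ergodic theorem by a covering/sandwiching argument. The main tool is the Wiener-type covering result due to Krengel and Pyke (and, in the lattice setting, the Nguyen--Zessin / Tempel'man framework), which upgrades the pointwise ergodic theorem to a supremum over a suitably regular (Vitali-type) family of sets. Concretely, I would first record that, by the $d$-parameter pointwise ergodic theorem under Assumption~\ref{ass:law}-(i), for any fixed $z \in \mathbb{R}^d$ and any fixed radii $a \leq b$ one has
\[
  \frac{1}{n^d} \sum_{x \in (n(z + [0,b]^d \setminus [0,a]^d)) \cap \mathbb{Z}^d} f \circ \tau_x
  \;\xrightarrow[n\to\infty]{}\; (b^d - a^d)\, \mean[f],
  \qquad \prob\text{-a.s.},
\]
and that the exceptional null set can be taken independent of $z$ ranging over a countable dense set and of $a,b$ ranging over the rationals, since there are only countably many such combinations.

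Next I would pass from boxes to balls. Fix $\varepsilon > 0$. Any Euclidean ball $B \in \mathcal{B}$ can be sandwiched, $\underline{Q} \subset B \subset \overline{Q}$, between two finite unions of dyadic cubes of side $\sim \varepsilon$ with centres on a fixed lattice and $|\overline{Q}| - |\underline{Q}| \leq C_d\, \varepsilon$, uniformly over $B \in \mathcal{B}$ (this uses only that $[-R,R]^d$ is bounded, so only finitely many such cubes are involved, and that $|\partial B|$ is controlled uniformly since the radius is at most $2R$). Applying the box-version ergodic convergence above to each of the finitely many cubes in play, and using monotonicity of the (nonnegative part of the) sum together with $f = f_+ - f_-$ to handle the sign, gives
\[
  \limsup_{n \to \infty} \sup_{B \in \mathcal{B}}
  \biggl| \frac{1}{n^d} \sum_{x \in (nB)\cap\mathbb{Z}^d} f\circ\tau_x - |B|\,\mean[f] \biggr|
  \;\leq\; C_d\, \varepsilon\, \mean[|f|] + \text{(error from lattice discretisation)},
\]
and then let $\varepsilon \downarrow 0$. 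The lattice discretisation error — the difference between $|nB|$ as a continuum volume and $|(nB)\cap\mathbb{Z}^d|$ as a cardinality, and likewise for the cubes — is $O(n^{d-1})$ uniformly in $B$ (again because radii are bounded by $2R$), hence $o(n^d)$, so it does not contribute in the limit.

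The step I expect to be the main obstacle is making the sandwiching genuinely \emph{uniform} over the uncountable family $\mathcal{B}$ while keeping the approximating cubes drawn from a single \emph{finite} (or at least countable) collection, so that one never needs more than countably many applications of the pointwise ergodic theorem. The clean way to do this is to fix, for each $\varepsilon$ of the form $2^{-m}$, the grid $\varepsilon\mathbb{Z}^d \cap [-2R, 2R]^d$ and approximate every $B$ from inside and outside by unions of the finitely many grid cubes it respectively contains or meets; one must check that the volume gap is $O(\varepsilon)$ uniformly, which follows from the uniform bound on the perimeter of balls of radius $\leq 2R$. An alternative, if one prefers to cite rather than re-derive, is to invoke \cite[Theorem~3]{BD03} (already used elsewhere in the paper) or a Vitali-covering ergodic theorem directly for the family of balls; I would mention this as the quickest route and reserve the explicit cube-sandwich argument as the self-contained proof.
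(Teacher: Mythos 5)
Your argument is correct in outline, but it takes a genuinely different (and much more laborious) route than the paper: the paper's entire proof is a citation of the uniform pointwise ergodic theorem of Krengel and Pyke, \cite[Theorem~1]{KP87}, which is stated exactly for suprema over regular classes of averaging sets such as Euclidean balls. Your cube-sandwich derivation -- fixed grids $\varepsilon\mathbb{Z}^{d}$ with $\varepsilon=2^{-m}$, inner/outer approximation of each ball by finitely many half-open grid cubes with a volume gap $O(\varepsilon)$ uniform over $\mathcal{B}$, the multiparameter $L^{1}$ ergodic theorem for each fixed scaled cube, splitting $f=f_{+}-f_{-}$ to use monotonicity, and a diagonal argument over the countably many scales -- is essentially a self-contained re-proof of that theorem in this special case; it buys independence from the reference at the cost of justifying the pointwise ergodic theorem for scaled, translated boxes (which is fine here: the sides of $nQ$ grow proportionally in $n$, so these sets form a Tempel'man-regular family, and your inclusion--exclusion over boxes anchored at the origin, with $z$ and the side lengths from countable dense sets, also works). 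Two small corrections: the ``lattice discretisation error'' you set aside never actually enters, since the sandwich compares sums over $(n\underline{Q})\cap\mathbb{Z}^{d}\subset(nB)\cap\mathbb{Z}^{d}\subset(n\overline{Q})\cap\mathbb{Z}^{d}$ and the ergodic limits already carry the Lebesgue volumes $|\underline{Q}|$ and $|\overline{Q}|$; and \cite[Theorem~3]{BD03} does not by itself yield uniformity over the uncountable family $\mathcal{B}$ -- it could only replace the single-cube ergodic input inside your sandwich -- so the genuinely quickest route is the one the paper takes, namely quoting \cite[Theorem~1]{KP87}.
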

\begin{proof}
  See, for instance, \cite[Theorem~1]{KP87}.
\end{proof}
Recall the definition of $\pi_{n}$ right before Theorem~\ref{thm:LCLT_green}. 
\begin{coro} \label{cor:KrengelPyke}
  Let $d \geq 2$ and $r > 0$. Suppose there exist $\theta \in (0, 1)$ and $p, q \in [1, \infty]$ such that Assumptions~\ref{ass:law}, \ref{ass:cluster}-(i) and~\ref{ass:pq} hold. 
  \begin{enumerate}[(i)]
  \item
    For any $\delta > 0$, there exists a random variable $N_{1}\equiv N_{1}(\omega, \delta)$ such that, for $\prob$-a.e.~$\omega$, $N_{1}(\omega, \delta) < \infty$  and for any $x \in [-r, r]^{d}$,
    \begin{align}\label{eq:dist:pi_n(x)}
      |\pi_{n}(x) - n x|_{2} \;\leq\; \delta n,
      \qquad \forall\, n \geq N_{1}(\omega, \delta). 
    \end{align}

  \item
    There exist $\Omega_{c} \in \mathcal{F}$ with $\prob[\Omega_{c}] = 1$ and $\bar{\mu}, \bar{\nu} \in (0, \infty)$ such that for any  $\delta \in (0,1)$ the following holds. There exists a random variable $N_{2}\equiv N_{2}(\omega, \delta)$ such that, for all $\omega \in \Omega_{\mathrm{reg}} \cap \Omega_{0}^{*} \cap \Omega_{c}$, $x \in [-r, r]^{d}$,
    \begin{align*}
      \sup_{n \geq N_2} \Norm{1 \vee \mu^{\omega}}{p, B^{\omega}(\pi_{n}(x), \delta n)}
      \;\leq\;
      \bar{\mu}
      \quad \text{and} \quad
      \sup_{n \geq N_2} \Norm{1 \vee \nu^{\omega}}{q, B^{\omega}(\pi_{n}(x), \delta n)}
      \;\leq\;
      \bar{\nu}.
    \end{align*}
    In particular, $\bar \gamma \ldef \gamma(\bar \mu, \bar\nu)\in (0,1)$.
  \end{enumerate}
\end{coro}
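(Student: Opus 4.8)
The plan is to derive both parts from the uniform ergodic theorem of Proposition~\ref{prop:krengel_pyke}, applied to a \emph{fixed} (hence $\delta$- and $x$-independent) family of integrable functions on $\Omega$, so that all exceptional null sets are pinned down once and for all. For part~(i), I would invoke Proposition~\ref{prop:krengel_pyke} with $f = \indicator_{\{0 \in \mathcal{C}_{\infty}\}} \in L^{1}(\Omega)$ and $R = r + \delta$: since $(f \circ \tau_{y})(\omega) = \indicator_{\{y \in \mathcal{C}_{\infty}(\omega)\}}$ and $n B_{\delta}(x) = B_{\delta n}(n x)$ for the open Euclidean ball $B_{\delta}(x) \subset [-R,R]^{d}$, one obtains, $\prob$-a.s.,
\[
  \sup_{x \in [-r,r]^{d}}
  \Bigl|
    \tfrac{1}{n^{d}}\,\bigl| B_{\delta n}(n x) \cap \mathcal{C}_{\infty}(\omega)\bigr|
    \,-\, v_{d}\,\delta^{d}\,\prob\bigl[0 \in \mathcal{C}_{\infty}\bigr]
  \Bigr|
  \;\longrightarrow\; 0,
  \qquad n \to \infty,
\]
where $v_{d}$ is the volume of the Euclidean unit ball. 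Since $\prob[0 \in \mathcal{C}_{\infty}] > 0$ by Assumption~\ref{ass:law}-(ii), the left-hand average is eventually positive uniformly in $x$, so $B_{\delta n}(n x) \cap \mathcal{C}_{\infty}(\omega) \neq \emptyset$ for every $x \in [-r,r]^{d}$ once $n \geq N_{1}(\omega,\delta)$; this is precisely $\operatorname{dist}(n x, \mathcal{C}_{\infty}(\omega)) < \delta n$, i.e.\ \eqref{eq:dist:pi_n(x)} (for $\delta \geq 1$ one runs the same argument with $\delta$ replaced by $1$).

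For part~(ii) I would first reduce to $p, q < \infty$: when $p = \infty$, stationarity and countability of $E_{d}$ force $\omega(e)$ to be $\prob$-a.s.\ bounded by a deterministic constant uniformly in $e$, so $1 \vee \mu^{\omega}$ is a.s.\ dominated by a deterministic constant, and similarly $1 \vee \nu^{\omega}$ when $q = \infty$; in these cases the claim is immediate. Assuming $p, q < \infty$, set $f_{1} \ldef (1 \vee \mu^{\omega}(0))^{p}$ and $f_{2} \ldef (1 \vee \nu^{\omega}(0))^{q}$; by Hölder's inequality $\mu^{\omega}(0)^{p} \leq (2d)^{p-1} \sum_{z \sim 0}\omega(\{0,z\})^{p}$ and similarly for $\nu^{\omega}(0)^{q}$, so $f_{1}, f_{2} \in L^{1}(\Omega)$ by Assumption~\ref{ass:pq}. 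Let $\Omega_{c} \in \mathcal{F}$ be the full-measure set on which the conclusion of Proposition~\ref{prop:krengel_pyke} holds for $f_{1}$ and $f_{2}$ with a fixed radius $R = R(r,d)$ (e.g.\ $R = \sqrt{d}\,r + 3$) and on which part~(i) holds for $\delta = 1$; note $\Omega_{c}, f_{1}, f_{2}$ depend only on $r, d$. For $\omega \in \Omega_{c}$ and $n \geq N_{1}(\omega,1)$ we have $\pi_{n}(x)/n \in [-(\sqrt d\,r+1), \sqrt d\,r+1]^{d}$, so $B_{2\delta}(\pi_{n}(x)/n) \in \mathcal{B}$; applying Proposition~\ref{prop:krengel_pyke} and using $(f_{1} \circ \tau_{y})(\omega) = (1 \vee \mu^{\omega}(y))^{p}$, $(f_{2} \circ \tau_{y})(\omega) = (1 \vee \nu^{\omega}(y))^{q}$, one gets a threshold $N_{2}(\omega,\delta)$ such that, for all $n \geq N_{2}(\omega,\delta)$ and all $x \in [-r,r]^{d}$,
\[
  \sum_{y \in B_{2\delta n}(\pi_{n}(x)) \cap \mathbb{Z}^{d}} (1 \vee \mu^{\omega}(y))^{p}
  \;\leq\;
  2\,v_{d}\,(2\delta)^{d}\,n^{d}\,\mean\bigl[f_{1}\bigr],
\]
and the same bound with $(\mu, f_{1}, p)$ replaced by $(\nu, f_{2}, q)$.

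For the denominator, observe that $d^{\omega} \geq |\cdot|_{1} \geq |\cdot|_{2}$, hence $B^{\omega}(\pi_{n}(x), \delta n) \subset B_{2\delta n}(\pi_{n}(x))$, so the numerator bound above applies to it. To bound $|B^{\omega}(\pi_{n}(x), \delta n)|$ from below I would use volume regularity: since $\pi_{n}(x), 0 \in [-Kn, Kn]^{d} \cap \mathcal{C}_{\infty}(\omega)$ for $n \geq N_{1}(\omega,1)$ with $K \ldef \sqrt{d}\,r + 1$, Assumption~\ref{ass:cluster}-(ii) gives $d^{\omega}(0, \pi_{n}(x)) \leq C_{\mathrm{d}} K n \rdef \rho_{n}$ for $n$ large, i.e.\ $\pi_{n}(x) \in B^{\omega}(0, \rho_{n})$; then the $\theta$-very-regularity of $B^{\omega}(0,\rho_{n})$ from Assumption~\ref{ass:cluster}-(i) makes $B^{\omega}(\pi_{n}(x), \delta n)$ regular as soon as $\rho_{n} \geq N_{0}(\omega)$ and $\delta n \geq \rho_{n}^{\theta/d}$, both of which hold for $n \geq N_{2}(\omega,\delta)$ after enlarging $N_{2}$ (using $\theta/d < 1$). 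Therefore $|B^{\omega}(\pi_{n}(x), \delta n)| \geq C_{\mathrm{V}} \lfloor \delta n \rfloor^{d} \geq C_{\mathrm{V}}\,(\delta n)^{d}\,2^{-d}$, and dividing the numerator bound by this,
\[
  \Norm{1 \vee \mu^{\omega}}{p, B^{\omega}(\pi_{n}(x), \delta n)}^{p}
  \;\leq\;
  \frac{2\,v_{d}\,(2\delta)^{d}\,n^{d}\,\mean[f_{1}]}{C_{\mathrm{V}}\,(\delta n)^{d}\,2^{-d}}
  \;=\;
  \frac{2^{2d+1}\,v_{d}}{C_{\mathrm{V}}}\,\mean\bigl[(1 \vee \mu^{\omega}(0))^{p}\bigr],
\]
the powers of $\delta$ cancelling; the identical estimate controls $\Norm{1 \vee \nu^{\omega}}{q, B^{\omega}(\pi_{n}(x), \delta n)}^{q}$. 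Hence one may take $\bar{\mu} \ldef (2^{2d+1} v_{d} C_{\mathrm{V}}^{-1} \mean[(1 \vee \mu^{\omega}(0))^{p}])^{1/p}$ and $\bar{\nu} \ldef (2^{2d+1} v_{d} C_{\mathrm{V}}^{-1} \mean[(1 \vee \nu^{\omega}(0))^{q}])^{1/q}$, both finite and independent of $\delta$; and since $\gamma$ maps $[0,\infty)^{2}$ into $(0,1)$ and is increasing, $\bar{\gamma} \ldef \gamma(\bar{\mu}, \bar{\nu}) \in (0,1)$ and $\gamma^{\omega}(\pi_{n}(x), \delta n) \leq \bar{\gamma}$ on this range.

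The main obstacle is precisely keeping $\bar{\mu}, \bar{\nu}, \bar{\gamma}$ (and $\Omega_{c}$) independent of $\delta$: one must balance the numerator estimate from Proposition~\ref{prop:krengel_pyke}, whose natural scale is $|B| \asymp \delta^{d}$, against the volume-regularity lower bound $\asymp (\delta n)^{d}$, so that the two factors of $\delta^{d}$ cancel. A secondary, more routine, point is converting the Euclidean control of $\pi_{n}(x)$ from part~(i) into the chemical control needed to invoke the very-regularity in Assumption~\ref{ass:cluster}-(i), which is what Assumption~\ref{ass:cluster}-(ii) supplies; one should also record that $\Omega_{c}$ and the constants $\bar{\mu}, \bar{\nu}$ depend only on $r, d, p, q$ and the moments in Assumption~\ref{ass:pq}, not on $\delta$ or $x$.
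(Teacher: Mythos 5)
Your proposal follows essentially the same route as the paper: part~(i) is the paper's argument verbatim (apply Proposition~\ref{prop:krengel_pyke} to $\indicator_{\{0 \in \mathcal{C}_{\infty}\}}$ with $R = r+\delta$, conclude $nB_{\delta}(x) \cap \mathcal{C}_{\infty}(\omega) \neq \emptyset$ eventually, hence $\pi_{n}(x) \in nB_{\delta}(x)$), and part~(ii) uses the same two-sided mechanism as the paper, namely the uniform ergodic theorem applied to $(1 \vee \mu^{\omega}(0))^{p}$ and $(1 \vee \nu^{\omega}(0))^{q}$ to bound the sum over a Euclidean ball of radius $\asymp \delta n$ containing $B^{\omega}(\pi_{n}(x), \delta n)$ (via $d^{\omega} \geq |\cdot|_{2}$ and part~(i)), divided by the volume lower bound $C_{\mathrm{V}}(\delta n)^{d}$ from regularity, with the two factors $\delta^{d}$ cancelling so that $\bar{\mu}, \bar{\nu}$ are $\delta$-independent. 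The constants and bookkeeping are sound, and your explicit treatment of the $p=\infty$, $q=\infty$ cases and of the integrability of $(1\vee\mu(0))^{p}$, $(1\vee\nu(0))^{q}$ is a harmless addition. The one substantive deviation is in how you justify that $B^{\omega}(\pi_{n}(x), \delta n)$ is a regular ball: you invoke Assumption~\ref{ass:cluster}-(ii) to bound $d^{\omega}(0,\pi_{n}(x)) \leq C_{\mathrm{d}}Kn$ and then use the $\theta$-very-regularity of $B^{\omega}(0, C_{\mathrm{d}}Kn)$. The corollary as stated only assumes Assumption~\ref{ass:cluster}-(i), and the paper's own proof simply cites ``volume regularity'' without explaining how the Euclidean localization of $\pi_{n}(x)$ is converted into the chemical containment $\pi_{n}(x) \in B^{\omega}(0,m)$ with $m^{\theta/d} \leq \delta n$ that the very-regularity requires; your use of part~(ii) of Assumption~\ref{ass:cluster} is the natural way to supply exactly this missing step, and it is available in every place the corollary is applied (Proposition~\ref{prop:Holdercont} and Theorem~\ref{thm:LCLT_green} assume the full Assumption~\ref{ass:cluster}), but strictly speaking it goes beyond the hypotheses listed in the corollary, so you should either record that extra hypothesis or explain how to obtain the chemical-distance control from Assumption~\ref{ass:cluster}-(i) alone.
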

\begin{proof}
  (i) Recall that $B_{\delta}(x) \subset \mathbb{R}^{d}$  denotes an open Euclidean ball with centre $x$ and radius $\delta$. Then, for any $x \in [-r, r]^{d}$ and $\delta > 0$, an application of Proposition~\ref{prop:krengel_pyke} with $R = r + \delta$ yields
  \begin{align*}
    \lim_{n \to \infty} \frac{1}{n^{d}}
    \mspace{-6mu}\sum_{z \in (n B_{\delta}(x)) \cap \mathbb{Z}^{d}}\mspace{-6mu}
    \mathbbm{1}_{\{z \in \mathcal{C}_{\infty}(\omega)\}}
    \;=\;
    |B_{\delta}(x)|\, \prob[0 \in \mathcal{C}_{\infty}],
    \qquad \prob\text{-a.s.}
  \end{align*}
  Hence, there exists a non-negative random variable $N_1$ such that, for $\prob$-a.e.~$\omega$, it holds that $N_{1}(\omega,\delta) < \infty$ and $n B_{\delta}(x) \cap \mathcal{C}_{\infty}(\omega) \ne \emptyset$ for any $n \geq N_{1}(\omega,\delta)$. Consequently, $\pi_{n}(x) \in n B_{\delta}(x) \cap \mathcal{C}_{\infty}(\omega)$ which implies that $|\pi_{n}(x) - nx|_{2} \leq \delta n$.

  (ii) Using (i), for $\prob$-a.e.~$\omega$, we have $n B_{\delta}(x) \cap \mathcal{C}_{\infty}(\omega) \ne \emptyset$ for any $n \geq N_1(\omega,\delta)$. Consequently, $\pi_{n}(x) \in n B_{\delta}(x) \cap \mathcal{C}_{\infty}(\omega)$ for all $n \geq N_{1}(\omega, \delta)$. Therefore, by using the volume regularity (see Assumption~\ref{ass:cluster}-(i)) and Proposition~\ref{prop:krengel_pyke}, there exist $\Omega_{c} \in \mathcal{F}$ with $\prob[\Omega_{c}] = 1$ and a random variable $N_{2}$ such that, for all $\omega \in \Omega_{\mathrm{reg}} \cap \Omega_{0}^{*} \cap \Omega_{c}$, $N_{2}(\omega, \delta) < \infty$ with
  \begin{align*}
    \sup_{n \geq N_{2}}
    \Norm{1 \vee \mu^{\omega}}{p, B^{\omega}(\pi_{n}(x), \delta n)}^{p}
    &\;\leq\;
    \sup_{n \geq N_{2}}
    \frac{1}{C_{\mathrm{V}} (\delta n)^{d}}
    \sum_{z \in (n B_{2 \delta}(x) ) \cap \mathbb{Z}^{d}} \bigl(1 \vee \mu^{\omega}(z)\bigr)^{p}\,
    \indicator_{\{z \in \mathcal{C}_{\infty}(\omega)\}}
    \\[.5ex]
    &\;\leq \;
    2 \frac{4^{d}}{C_{\mathrm{V}}}
    \mean\bigl[
      \bigl(1 \vee \mu(0) \bigr)^{p} \indicator_{\{0 \in \mathcal{C}_{\infty}\}}
    \bigr]
    \;\rdef\;
    \bar{\mu}^{p},
  \end{align*}
  and similarly,
  \begin{align*}
    \sup_{n \geq N_{2}}
    \Norm{1 \vee \nu^{\omega}}{q, B^{\omega}(\pi_{n}(x), \delta n)}^{q}
    &\;\leq\;
    2 \frac{4^d}{C_{\mathrm{V}}}
    \mean\bigl[
      \bigl(1 \vee \nu(0)\bigr)^{q} \indicator_{\{0 \in \mathcal{C}_{\infty}\}}
    \bigr]
    \;\rdef\;
    \bar{\nu}^{q}.
  \end{align*}
  Since $\bar{\mu}, \bar{\nu} \in (0, \infty)$ we have $\bar \gamma \ldef \gamma(\bar{\mu}, \bar{\nu}) \in (0, 1)$.
\end{proof}
Now, we are able to show the required H\"older regularity estimate for $g^{\omega}_{n D}$. Recall the definition of the set $K_{\varepsilon, \delta}$ in \eqref{eq:defK_eps}.
\begin{prop} \label{prop:Holdercont}
  Let $d \geq 2$ and $D \subset \mathbb{R}^{d}$ be a bounded domain. Suppose there exist $\theta \in (0, 1)$ and $p, q \in [1, \infty]$ such that Assumptions~\ref{ass:law},~\ref{ass:cluster} and~\ref{ass:pq} hold. Then, exist $C_{5} \in (0, \infty)$ and $\vartheta \in (0, 1/2)$ such that for any $\varepsilon, \delta > 0$ such that $0 < \varepsilon < \delta$ and $K_{\varepsilon, \delta} \ne \emptyset$ the following hold. For any $\eta \in (0, \varepsilon / 8)$ there exists a random variable $N_{3}$ (depending on $d, p, q, \theta, \eta$) that is $\prob_{0}$-a.s.\ finite such that, for any $(x, y) \in K_{\varepsilon, \delta}$ and $n \geq N_{3}$,
  \begin{align}
    \label{eq:oscilation:Green:final}
    \sup_{z, z' \in B^{\omega}(\pi_{n}(x), \eta n)}
    \bigl|
      g^{\omega}_{nD}(z, \pi_{n}(y)) -g^{\omega}_{nD}(z', \pi_{n}(y))
    \bigr|
    \;\leq\;
    C_{5}\frac{\eta^{\varrho}}{\varepsilon^{d}}\, n^{2-d},
  \end{align}
  where $\varrho \ldef \log\bar{\gamma} / \log\vartheta$.
\end{prop}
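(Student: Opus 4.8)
The plan is to exploit that, with $y$ fixed, the function $u_{n} \ldef g^{\omega}_{nD}(\cdot,\pi_{n}(y))$ is non-negative and, by~\eqref{eq:PDE_green_killed}, $\mathcal{L}^{\omega}$-harmonic on $(nD\cap\mathcal{C}_{\infty}(\omega))\setminus\{\pi_{n}(y)\}$, and to combine the oscillation inequality of Theorem~\ref{thm:osc} with the a-priori bound of Corollary~\ref{cor:Green_apriori}: one first controls $u_{n}$ from above on a $d^{\omega}$-ball of radius comparable to $\varepsilon n$ around $\pi_{n}(x)$ (harmonicity being available there because the pole $\pi_{n}(y)$ is at $d^{\omega}$-distance of order $\varepsilon n$), and then iterates Theorem~\ref{thm:osc} from that scale down to the target scale $\eta n$, each step contracting the oscillation by a factor $\bar\gamma\in(0,1)$. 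Every quantitative input has to be made uniform in $(x,y)\in K_{\varepsilon,\delta}$, and this uniformity is exactly what Corollary~\ref{cor:KrengelPyke} delivers.

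In detail, fix $(x,y)\in K_{\varepsilon,\delta}$ and set $x_{0}\ldef\pi_{n}(x)$. Choose $r_{D}\in(0,\infty)$ with $\bar D\subset[-r_{D},r_{D}]^{d}$, so that $nD\cap\mathcal{C}_{\infty}(\omega)\subset B^{\omega}(0,Rn)$ with $R\ldef C_{\mathrm{d}}r_{D}$ for $n$ large, by Remark~\ref{rem:ass_cluster}-(ii). Using Corollary~\ref{cor:KrengelPyke}-(i) (which makes $\pi_{n}(x)$ and $\pi_{n}(y)$ as close to $nx$ and $ny$ as desired, uniformly over $x,y$ in the bounded projections of $K_{\varepsilon,\delta}$), together with $|x-y|_{2}\ge\varepsilon$, $d^{\omega}\ge|\cdot|_{2}$ and $\operatorname{dist}(x,\partial D)\ge\delta>\varepsilon$, one can fix a dimensional constant $c_{0}\in(0,1)$ and set $s_{n}\ldef\lfloor c_{0}\varepsilon n\rfloor$ so that, for $n$ beyond a $\prob_{0}$-a.s.\ finite threshold, $B^{\omega}(x_{0},s_{n})\subset nD$, $\pi_{n}(y)\notin B^{\omega}(x_{0},s_{n})$ and $d^{\omega}(x_{0},\pi_{n}(y))\ge\tfrac{\varepsilon}{2}n$. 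In particular $u_{n}$ is non-negative and $\mathcal{L}^{\omega}$-harmonic on $B^{\omega}(x_{0},s_{n})$.

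For the a-priori upper bound, use the domain monotonicity $g^{\omega}_{nD}\le g^{\omega}_{B^{\omega}(0,Rn)}$ and apply Corollary~\ref{cor:Green_apriori} with base point $0$ and small radius of order $\varepsilon n$ (so the scale ratio there is of order $1/\varepsilon$); Corollary~\ref{cor:KrengelPyke}-(ii) and the ordinary ergodic theorem bound the $\ell^{p}$-norms of $\mu^{\omega}$ and $\ell^{q}$-norms of $\nu^{\omega}$ that occur by deterministic constants $\bar\mu,\bar\nu$, uniformly over $x$ in the bounded first projection of $K_{\varepsilon,\delta}$. This gives $\sup_{z\in B^{\omega}(x_{0},s_{n}/2)}u_{n}(z)\le C\,\varepsilon^{-d}\,n^{2-d}$, uniformly in $(x,y)$, for $n$ beyond a $\prob_{0}$-a.s.\ finite threshold. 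Next, iterate Theorem~\ref{thm:osc}: each application turns harmonicity on a ball of radius $m$ into an oscillation estimate on the concentric ball of radius $\vartheta m$ with contraction factor $\gamma^{\omega}(x_{0},m)\le\gamma(\bar\mu,\bar\nu)=\bar\gamma\in(0,1)$, the uniform bound on $\gamma^{\omega}$ being again Corollary~\ref{cor:KrengelPyke}-(ii) applied at the finitely many scales $\vartheta^{k}s_{n}$. Taking $j$ to be the largest integer with $\vartheta^{j}(s_{n}/2)\ge\eta n$ — so $j$ is of order $\log(\varepsilon/\eta)$ and $B^{\omega}(x_{0},\eta n)\subseteq B^{\omega}(x_{0},\vartheta^{j}s_{n}/2)$ — and using $\osc\le\sup$ (valid since $u_{n}\ge0$; the strict positivity in Theorem~\ref{thm:osc} is harmless because constants are $\mathcal{L}^{\omega}$-harmonic, so one applies it to $u_{n}+\epsilon$ and lets $\epsilon\downarrow0$), one obtains
\begin{align*}
  \osc_{B^{\omega}(x_{0},\eta n)} u_{n}
  &\;\le\;
  \bar\gamma^{\,j}\, \sup_{z\in B^{\omega}(x_{0},s_{n}/2)} u_{n}(z)
  \;\le\;
  C\,\bar\gamma^{\,j}\,\varepsilon^{-d}\,n^{2-d}.
\end{align*}
Since $\bar\gamma=\vartheta^{\varrho}$ with $\varrho=\log\bar\gamma/\log\vartheta$, we have $\bar\gamma^{\,j}=(\vartheta^{j})^{\varrho}$, which is of order $(\eta/\varepsilon)^{\varrho}$; collecting the powers of $\varepsilon$ and $\eta$ yields the asserted bound $C_{5}\,\eta^{\varrho}\varepsilon^{-d}\,n^{2-d}$. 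The random variable $N_{3}$ is the maximum of the finitely many thresholds above; it is $\prob_{0}$-a.s.\ finite because $N_{0}(\omega)<\infty$ a.s., and it behaves like a constant over $\eta$, which is why it is allowed to depend on $\eta$.

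The main obstacle is exactly this uniformity: the contraction constant $\bar\gamma$, the a-priori constant, and the threshold $N_{3}$ must be independent of the moving random centres $\pi_{n}(x),\pi_{n}(y)$, and controlling ergodic averages on balls whose centres vary with $n$ is precisely what the Krengel--Pyke-type ergodic theorem behind Corollary~\ref{cor:KrengelPyke} is for. A secondary point to track is that each ball in the iteration must remain above the minimal admissible scale (of order $N_{0}(\omega)\vee R^{\kappa_{1}}$) for Theorem~\ref{thm:osc} to apply, which is what forces $N_{3}$ to deteriorate as $\eta\downarrow0$; and the excision of the pole at $\pi_{n}(y)$ caps the starting scale of the iteration at order $\varepsilon n$ rather than order $n$, which is the reason the supremum in Theorem~\ref{thm:LCLT_green} must stay away from a neighbourhood of the diagonal.
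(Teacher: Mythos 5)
Your proposal is correct and follows essentially the same route as the paper's proof: harmonicity of $g^{\omega}_{nD}(\cdot,\pi_{n}(y))$ away from the pole, an a-priori bound at scale $\varepsilon n$ from Corollary~\ref{cor:Green_apriori} (fed by the exit-time estimate), iteration of the oscillation inequality of Theorem~\ref{thm:osc} with the uniform contraction factor $\bar{\gamma}$ down to scale $\eta n$, and Corollary~\ref{cor:KrengelPyke} both to place $\pi_{n}(x),\pi_{n}(y)$ correctly relative to $nD$ and to each other and to make the constants and thresholds uniform over $K_{\varepsilon,\delta}$. The minor deviations (making the domain monotonicity $g^{\omega}_{nD}\le g^{\omega}_{B^{\omega}(0,Rn)}$ explicit, and the $u+\epsilon$ regularization for strict positivity in Theorem~\ref{thm:osc}) are harmless, and your handling of the powers of $\varepsilon$ and $\eta$ matches the paper's own bookkeeping.
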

\begin{proof}
  Fix $\varepsilon, \delta > 0$ such that $0 < \varepsilon < \delta$ and $K_{\varepsilon, \delta} \ne \emptyset$, and choose $r_{D} \in (0, \infty)$ such that $D \subset [-r_{D}, r_{D}]^{d}$. Then, for any $\omega \in \Omega_{\mathrm{reg}} \cap \Omega_{0}^{*}$ and $n \geq \max\{ N_{0}(\omega)/r_{D}, 1 \}$, we have that $n D \cap \mathcal{C}_{\infty}(\omega) \subset B^{\omega}(0, C_{\mathrm{d}} r_{D} n)$. The remaining proof comprises two steps.
  \smallskip
  
  \noindent  
  \textsc{Step 1.} For any $k \in \mathbb{N}_{0}$ set $\delta_{k} \ldef \varepsilon \vartheta^{k} / 2$ with $\vartheta$ as Theorem~\ref{thm:osc}. Then, for any $\eta \in (0, \varepsilon / 8)$ there exists $k_{0} \equiv k_{0}(\eta) \in \mathbb{N}_{0}$ such that $\delta_{k_{0} + 1} < \eta \leq \delta_{k_{0}}$. Further, for any $\omega \in \Omega_{\mathrm{reg}} \cap \Omega_{0}^{*}$ and $n \in \mathbb{N}$, let $x_{0}, y_{0} \in nD \cap \mathcal{C}_{\infty}(\omega)$ be such that $B^{\omega}(x_{0}, \varepsilon n) \subset n D \cap \mathcal{C}_{\infty}(\omega)$ and $y_{0} \not\in B^{\omega}(x_{0}, \varepsilon n)$. This ensures that the function $z \mapsto g_{n D}^{\omega}(z, y_{0})$ is positive and harmonic on $B^{\omega}(x_{0}, \delta_{k} n)$ for any $k \in \{0, \ldots, k_{0}\}$.
  
  Now, let $\omega \in \Omega_{\mathrm{reg}} \cap \Omega_{0}^{*} \cap \Omega_{c}$ and
  \begin{align*}
    n
    \;\geq\;
    \tilde N_3(\omega)
    \;\ldef\;
    \eta^{-1}
    \bigl(
      2^{8d} C_{\mathrm{W}}
      \bigl( N_{0}(\omega) \vee R^{\kappa_{1}} \bigr)
      \vee
      \max_{0 \leq k \leq k_{0}} N_{2}(\omega, \delta_{k})
    \bigr)
  \end{align*}
  with $R \ldef C_{\mathrm{d}} r_{D} / \eta$ and $\kappa_{1} \equiv \kappa_{1}(\theta, d, p, q) \in (0, \infty)$ as in Proposition~\ref{prop:Max_ineq_poisson}. Then, by Theorem~\ref{thm:osc},
  \begin{align*}
    \osc\displaylimits_{B^{\omega}(x_{0}, \delta_{k} n)} g_{nD}^{\omega}(\cdot, y_{0})
    \;\leq\;
    \bar{\gamma}\,
    \osc\displaylimits_{B^{\omega}(x_{0}, \delta_{k-1} n)} g_{nD}^{\omega}(\cdot, y_{0}),
    \qquad \forall\, k \in \{1, \ldots k_{0}\},
  \end{align*}
  and by iterating this we obtain
  \begin{align*}
    \osc\displaylimits_{B^{\omega}(x_{0}, \delta_{k_{0}} n)} g_{nD}^{\omega}(\cdot, y_{0})
    \;\leq\;
    \bar{\gamma}^{k_{0}}\,
    \sup_{B^{\omega}(x_{0}, \delta_{0} n)} g_{nD}^{\omega}(\cdot, y_{0}). 
  \end{align*}
  By the choice of $k_{0}$, we have $B^{\omega}(x_{0}, \eta n) \subset B^{\omega}(x_{0}, \delta_{k_{0}} n)$ and $\bar{\gamma}^{k_{0}} \leq (\frac{8}{\varepsilon}\eta)^\varrho$. Hence,
  \begin{align}
    \label{eq:osc:Green}
    \sup_{z, z' \in B^{\omega}(x_{0}, \eta n)}
    \bigl| g_{n D}^{\omega}(z, y_{0}) - g_{nD}^{\omega}(z'\!, y_{0})\bigr| 
    \;\leq\; 
    c\, \eta^{\varrho} \sup_{B^{\omega}(x_{0}, \varepsilon n / 2)} g_{nD}^{\omega}(\cdot, y_{0}).
  \end{align}
  Moreover, by applying Corollary~\ref{cor:Green_apriori} with the choice $\sigma' = 1/2$, $\sigma = 1$ and $R = K = C_{\mathrm{d}} r_{D} / \varepsilon$, we get
  \begin{align}
    \label{eq:max:Green:osc}
    \sup_{B^{\omega}(x_{0}, \varepsilon n / 2)} g_{nD}^{\omega}(\cdot, y_{0})
    \;\leq\;
    C_{6} \frac{(C_{\mathrm{d}} r_{D})^{2}}{\varepsilon^{2}}
    \bigl(4 \bar{\mu} \bar{\nu}\bigr)^{\kappa} \bar{\nu} (\varepsilon n)^{2-d}
    \;\leq\;
    \frac{c}{\varepsilon^{d}}\, n^{2-d}.
  \end{align}
  By combining \eqref{eq:osc:Green} and \eqref{eq:max:Green:osc} we  obtain that, for any $\omega \in \Omega_{\mathrm{reg}} \cap \Omega_{0}^{*} \cap \Omega_{c}$ and $n \geq  \tilde N_{3}$,
  \begin{align}
    \sup_{z, z' \in B^{\omega}(x_{0}, \eta n)}
    \bigl| g_{n D}^{\omega}(z, y_{0}) - g_{nD}^{\omega}(z'\!, y_{0})\bigr| 
    \;\leq\; 
    c\, \frac{\eta^{\varrho}}{\varepsilon^{d}}\, n^{2-d}.
  \end{align}

  \textsc{Step 2.} Set $r \ldef (\delta - \varepsilon) \wedge \varepsilon/8$. Then, by applying Corollary~\ref{cor:KrengelPyke}-(i) with $R = r_{D}$, we get that, for $\prob_{0}$-a.e.\ $\omega$, every $n \geq N_{1}(\omega,r)$ and any $(x, y) \in K_{\varepsilon, \delta}$,
  \begin{align*}
    |\pi_{n}(x) - n x|_{2} \;\leq\; r n
    \qquad \text{and} \qquad
    |\pi_{n}(y) - n y|_{2} \;\leq\; r n.
  \end{align*}
  Hence,
  \begin{align*}
    \operatorname{dist}(\pi_{n}(x), \partial (n D))
    \;\geq\;
    \operatorname{dist}(n x, \partial (n D)) - |\pi_{n}(x) - n x|_{2}
    \;\geq\;
    \varepsilon n
    \;>\;
    0,
  \end{align*}
  which implies that $\pi_{n}(x) \in n D$. Since, by construction, $\pi_{n}(x) \in \mathcal{C}_{\infty}(\omega)$ it follows that $\pi_{n}(x) \in n D \cap \mathcal{C}_{\infty}(\omega)$. In particular,
  \begin{align*}
    B^{\omega}(\pi_{n}(x), \varepsilon n / 2)
    \;\subset\;
    n D \cap \mathcal{C}_{\infty}(\omega),
    \qquad \text{for } \prob_{0}\text{-a.e. } \omega \text{ and any } n \geq N_{1}(\omega,r).  
  \end{align*}
  Similarly we have that $\pi_{n}(y) \in n D \cap \mathcal{C}_{\infty}(\omega)$ for $\prob_{0}$-a.e.\ $\omega$ and $n \geq N_{1}(\omega,r)$. Since
  \begin{align*}
    d^{\omega}(\pi_{n}(x), \pi_{n}(y))
    &\;\geq\;
    |\pi_{n}(x) - \pi_{n}(y)|_{2}
    \\
    &\;\geq\;
    |n x - n y|_{2} - |\pi_{n}(x) - n x|_{2} - |\pi_{n}(y) - n y|_{2}
    \;\geq\;
    \frac{3}{4} \varepsilon n,
  \end{align*}
  it follows that $\pi_{n}(y) \in (n D \cap \mathcal{C}_{\infty}(\omega)) \setminus B^{\omega}(\pi_{n}(x), \varepsilon n / 2)$ for $\prob_{0}$-a.e.\ $\omega$ and $n \geq N_{1}(\omega,r)$. Hence, for $\prob_{0}$-a.e.\ $\omega$ and any $n \geq N_{3}(\omega) \ldef \tilde N_3(\omega) \vee N_{1}(\omega,r)$, choosing $x_{0} = \pi_{n}(x)$, $y_{0} = \pi_{n}(y)$, we may apply \emph{Step~1} (with $\varepsilon$ replaced by $\varepsilon / 2$) to obtain \eqref{eq:oscilation:Green:final}.
\end{proof}

\subsection{Proof of the local limit theorem}
\begin{proof}[Proof of Theorem~\ref{thm:LCLT_green}]
  First, fix any $\delta, \varepsilon > 0$ with $0 < \varepsilon < \delta$ such that $K_{\varepsilon, \delta} \ne \emptyset$. Further, for $x \in \mathbb{R}^{d}$ and $r > 0$ set $C_{r}(x) \ldef x + [-r, r]^{d}$. Recall that, for any $\omega \in \Omega^{*}$ and $n \in \mathbb{N}$, we denote by $\pi_{n}\colon \mathbb{R}^{d} \to \mathcal{C}_{\infty}(\omega)$ the function that maps any $x \in \mathbb{R}^{d}$ to a closest point of $nx$ in $\mathcal{C}_{\infty}(\omega)$.
  
  The proof is divided in two steps. We first show a pointwise limit and then, using a covering argument, we establish the full result.  
  \smallskip

  \textsc{Step 1.} Fix any $(x_{0}, y_{0}) \in K_{\varepsilon, \delta}$ and $r \in (0, \varepsilon/(24 C_{\mathrm{d}}))$. Further, choose $r_{D} \in (0, \infty)$ such that $D \subset [-r_{D}, r_{D}]^{d}$. Then, $C_{r}(x_{0}) \subset D$ and $C_{r}(y_{0}) \subset D$. Let $f_{1}, f_{2} \in C(\mathbb{R}^{d}, \mathbb{R})$ be two non-negative functions with $\supp f_{1} \subset C_{r}(x_{0})$, $\supp f_{2} \subset C_{r}(y_{0})$ and $\int_{\mathbb{R}^{d}} f_{1}(x) \md x = \int_{\mathbb{R}^{d}} f_{2}(x) \md x = 1$. We claim that
  \begin{align}
    \label{eq:lclt:claim}
    \lim_{n \to \infty} n^{d-2} g_{n D}^{\omega}(\pi_{n}(x_{0}), \pi_{n}(y_{0}))
    \;=\;
    \frac{g_{D}^{\Sigma}(x_{0}, y_{0})}{\prob[0 \in \mathcal{C}_{\infty}]},
    \qquad \prob_{0}\text{-a.s.}
  \end{align}
  Define $M(n, r, \omega) \ldef n^{-2d} \sum_{z_{1}, z_{2} \in \mathcal{C}_{\infty}(\omega)} f_{1}(z_{1} / n) f_{2}(z_{2}/n)$ and the measurable function $F_{2}\colon D([0, \infty), \mathbb{R}^{d}) \to [0, \infty)$ by $w \mapsto F_{2}(w) \ldef \int_{0}^{\tau_{D}(w)} f_{2}(w_{t})\, \mathrm{d}t$, and set
  \begin{align}
    \label{eq:lclt:split}
    J(n, r, \omega)
    &\;\ldef\;
    \frac{1}{n^{d}}\mspace{-6mu}\sum_{z \in \mathcal{C}_{\infty}(\omega)}\mspace{-6mu}
    f_{1}(z/n)\, \Mean_{z}^{\omega}\bigl[F_{2}(X^{n})\bigr]
    -
    \prob\bigl[0 \in \mathcal{C}_{\infty}\bigr]\,
    \int_{\mathbb{R}^{d}} f_{1}(x) \Mean_{x}^{\Sigma}\bigl[F_{2}(W)\bigr]\, \mathrm{d}x
    \nonumber\\[.5ex]
    &\mspace{10mu}=\;
    J_{1}(n, r, \omega) + J_{2}(n, r, \omega) + J_{3}(n, r, \omega) + J_{4}(r),
  \end{align}
  where
  \begin{align*}
    J_{1}(n, r, \omega)
    &\;\ldef\;
    \frac{1}{n^{d+2}}\mspace{-6mu}\sum_{z_{1}, z_{2} \in \mathcal{C}_{\infty}(\omega)}\mspace{-12mu}
    f_{1}(z_{1}/n)\, f_{2}(z_{2}/n)\,
    \Bigl(
      g_{nD}^{\omega}(z_{1}, z_{2})
      -
      g_{nD}^{\omega}\bigl(\pi_{n}(x_{0}), \pi_{n}(y_{0})\bigr)
    \Bigr),
    \\[.5ex]
    J_{2}(n, r, \omega)
    &\;\ldef\;
    M(n, r, \omega)\,
    \biggl(
      n^{d-2}\,g_{nD}^{\omega}(\pi_{n}(x_{0}), \pi_{n}(y_{0}))
      -
      \frac{g_{D}^{\Sigma}(x_{0}, y_{0})}{\prob\bigl[0 \in \mathcal{C}_{\infty}\bigr]}
    \biggr),
    \\[.5ex]
    J_{3}(n, r, \omega)
    &\;\ldef\;
    \biggl(
      \frac{M(n, r, \omega)}{\prob\bigl[0 \in \mathcal{C}_{\infty}\bigr]}
      -
      \prob\bigl[0 \in \mathcal{C}_{\infty}\bigr]
    \biggr)\,
    g_{D}^{\Sigma}(x_{0}, y_{0}),
    \\[.5ex]
    J_{4}(r)
    &\;\ldef\;
    \prob\bigl[0 \in \mathcal{C}_{\infty}\bigr]\,
    \int_{\mathbb{R}^{d}}
      \int_{\mathbb{R}^{d}}
        f_{1}(x)\, f_{2}(y) \bigl(g_{D}^{\Sigma}(x, y) - g_{D}^{\Sigma}(x_{0}, y_{0})\bigr)\,
      \mathrm{d}y\,
    \mathrm{d}x.
  \end{align*}
  By rearranging those terms we obtain that
  \begin{align*}
    &\biggl|
      n^{d-2}\, g_{nD}^{\omega}(\pi_{n}(x_{0}), \pi_{n}(y_{0}))
      - \frac{g_{D}^{\Sigma}(x_{0}, y_{0})}{\prob\bigl[0 \in \mathcal{C}_{\infty}\bigr]}
    \biggr|
    \\[.5ex]
    &\mspace{36mu}\leq\;
    \frac{1}{M(n, r, \omega)}\,
    \bigl(
      |J(n, r, \omega)| + |J_{1}(n, r, \omega)| + |J_{3}(n, r, \omega)| + |J_{4}(r)|
    \bigr).
  \end{align*}
  Thus, it suffices to show that the right-hand side vanishes when we first take the limit $n \to \infty$ and then $r \downarrow 0$. First, by applying \cite[Theorem~3]{BD03}, it follows that $\lim_{n \to \infty} M(n, r, \omega) = \prob[0 \in \mathcal{C}_{\infty}]^{2}$ for $\prob$-a.e.~$\omega$. Together with Theorem~\ref{thm:FCLT:Poisson} this implies that, for $\prob$-a.e.\ $\omega$,
  \begin{align*}
    \lim_{n \to \infty} \frac{|J(n, r, \omega)| + |J_{3}(n, r, \omega)|}{M(n, r, \omega)}
    \;=\;
    0.
  \end{align*}
  Moreover, by the uniform continuity of Green kernel $g_{D}^{\Sigma}$, it follows that, for $\prob$-a.e.~$\omega$,
  \begin{align*}
    \lim_{r \downarrow 0} \lim_{n \to \infty} |J_{4}(r)| / M(n, r, \omega)
    \;=\;
    \lim_{r \downarrow 0} |J_{4}(r)| / \prob[0 \in \mathcal{C}_{\infty}]^{2}
    \;=\;
    0.
  \end{align*}

  It remains to consider the term $J_{1}$. Since $x_{0} \in [-r_{D}, r_{D}]^{d}$, Corollary~\ref{cor:KrengelPyke}-(i) implies that $|\pi_{n}(x_{0}) - nx_{0}| < r n /2$ for $\prob$-a.e.\ $\omega$ and $n \geq N_{1}(\omega, r/2)$. Thus, by using Assumption~\ref{ass:cluster}-(ii), we find that, for any $\omega \in \Omega_{\mathrm{reg}} \cap \Omega_{0}^{*}$ and any $n \geq N_{0}(\omega) / r_{D} \vee N_{1}(\omega, r/2)$,
  \begin{align}
    \label{eq:set:inclusion:1}
    n C_{r}(x_{0}) \cap \mathcal{C}_{\infty}(\omega)
    \;\subset\;
    \pi_{n}(x_{0}) + [-n 3r/2, n 3r/2]^{d} \cap \mathcal{C}_{\infty}(\omega)
    \;\subset\;
    B^{\omega}(\pi_{n}(x_{0}), 3 C_{\mathrm{d}} r n / 2 ),
  \end{align}
  and, similarly,
  \begin{align}
    \label{eq:set:inclusion:2}
    n C_{r}(y_{0}) \cap \mathcal{C}_{\infty}(\omega)
    \;\subset\;
    B^{\omega}(\pi_{n}(y_{0}), 3 C_{\mathrm{d}} r n / 2 ).
  \end{align}
  Further, by using the symmetry of the Green kernel with zero boundary condition, we obtain for any $z_{1} \in n C_{r}(x_{0}) \cap \mathcal{C}_{\infty}(\omega)$ and $z_{2} \in n C_{r}(y_{0}) \cap \mathcal{C}_{\infty}(\omega)$,
  \begin{align}
    \label{eq:split:sum:Green}
    &\bigl|
      g_{nD}^{\omega}(z_{1}, z_{2})
      -
      g_{nD}^{\omega}\bigl(\pi_{n}(x_{0}), \pi_{n}(y_{0})\bigr)
    \bigr|
    \nonumber\\[1ex]
    &\mspace{36mu}\leq\;
    \bigl|
      g_{nD}^{\omega}(z_{2}, z_{1})
      -
      g_{nD}^{\omega}\bigl(\pi_{n}(y_{0}), z_{1})\bigr)
    \bigr|
    +
    \bigl|
      g_{nD}^{\omega}(z_{1}, \pi_{n}(y_{0}))
      -
      g_{nD}^{\omega}\bigl(\pi_{n}(x_{0}), \pi_{n}(y_{0})\bigr)
    \bigr|.
  \end{align}
  Thus, recalling that $z_{1} \in B^{\omega}(\pi_{n}(x_{0}), 3 C_{\mathrm{d}}r n/2)$ by \eqref{eq:set:inclusion:1}, the Hölder continuity estimate in Proposition~\ref{prop:Holdercont} immediately yields that, for $\prob_{0}$-a.e.\ $\omega$ and any $n \geq N_{3}(\omega)$,
  \begin{align}
    \label{eq:split:sum:Green:2}
    \bigl|
      g_{nD}^{\omega}(z_{1}, \pi_{n}(y_{0}))
      -
      g_{nD}^{\omega}\bigl(\pi_{n}(x_{0}), \pi_{n}(y_{0})\bigr)
    \bigr|
    \;\leq\;
    C_{5} \frac{(3 C_{d} r / 2)^{\varrho}}{\varepsilon^{d}} n^{2-d}.
  \end{align}
  In order to estimate the first term on the right-hand side of \eqref{eq:split:sum:Green}, first note that for any $n \in \mathbb{N}$ and $z_{1} \in n C_{r}(x_{0})$,
  \begin{align*}
    |z_{1}/n - y_{0}|_{2}
    \;\geq\;
    |x_{0} - y_{0}|_{2} - |z_{1}/n - x_{0}|_{2}
    \;\geq\;
    \varepsilon - \frac{r}{2}
    \;\geq\;
    \varepsilon - \frac{\varepsilon}{16 C_{\mathrm{d}}}
    \;>\;
    \varepsilon/2,
  \end{align*}
  and, since $\delta > \varepsilon$,
  \begin{align*}
    \operatorname{dist}(z_{1}/n, \partial D)
    \;\geq\;
    \operatorname{dist}(x_{0}, \partial D) 
    \; \geq \;
    \delta - \frac{r}{2} 
    \;\geq\;
    \delta - \frac{\varepsilon}{16 C_{\mathrm{d}}}
    \;>\;
    \delta/2.
  \end{align*}
  Hence, $(z_{1}/n, y_{0}) \in K_{\varepsilon/2, \delta/2}$ for all $z_{1}\in n C_{r}(x_{0})$. In particular, every $z_{1} \in n C_{r}(x_{0}) \cap \mathcal{C}_{\infty}(\omega)$ is of the form $z_{1} = \pi_{n}(z_1')$ for some $z_{1}' \in [-r_{D}, r_{D}]^{d}$ such that $(z_{1}', y_{0}) \in K_{\varepsilon/2, \delta/2}$. Therefore, recalling that $z_{2} \in B^{\omega}(\pi_{n}(y_{0}), 3 C_{\mathrm{d}}r n/2)$ by \eqref{eq:set:inclusion:2}, by another application of Proposition~\ref{prop:Holdercont} on $K_{\varepsilon/2, \delta/2}$, we obtain for $\prob_{0}$-a.e.\ $\omega$, any $n \geq N_{3}(\omega)$ and $z_{2} \in n C_{r}(y_{0}) \cap \mathcal{C}_{\infty}(\omega)$,
  \begin{align}
    \label{eq:split:sum:Green:1}
    &\bigl|
      g_{nD}^{\omega}(z_{2}, z_{1})
      -
      g_{nD}^{\omega}\bigl(\pi_{n}(y_{0}), z_{1}\bigr)
    \bigr|
    \nonumber\\[.5ex]
    &\mspace{36mu}=\;
    \bigl|
      g_{nD}^{\omega}(z_{2}, \pi_{n}(z_{1}'))
      -
      g_{nD}^{\omega}\bigl(\pi_{n}(y_{0}), \pi_{n}(z_{1}')\bigr)
    \bigr|
    \;\leq\;
    C_{5} \frac{(3 C_{d} r / 2)^{\varrho}}{(\varepsilon/2)^{d}} n^{2-d}.
  \end{align}
  By combining \eqref{eq:split:sum:Green:1} and \eqref{eq:split:sum:Green:2} with \eqref{eq:split:sum:Green},
  \begin{align*}
    \bigl|
      g_{nD}^{\omega}(z_{1}, z_{2})
      -
      g_{nD}^{\omega}\bigl(\pi_{n}(x_{0}), \pi_{n}(y_{0})\bigr)
    \bigr|
    \;\leq\;
    c\, \frac{r^{\rho}}{\varepsilon^{d}}\, n^{2-d}.
  \end{align*}
  Hence, $\limsup_{r \downarrow 0} \limsup_{n \to \infty} |J_{1}(n, r, \omega)| \leq \limsup_{r \downarrow 0} c r^{\rho} \varepsilon^{-d} / \prob[0 \in \mathcal{C}_{\infty}]^{2} = 0$ for $\prob_{0}$-a.e.~$\omega$.
  \smallskip
  
  \textsc{Step 2.} To extend the pointwise convergence shown in \emph{Step~1} to uniform convergence in $K_{\varepsilon, \delta}$ we use a covering argument similar to one  in \cite{BH09,ACS21}. The idea is to approximate $K_{\varepsilon, \delta}$ by a finite lattice on which the pointwise convergence is uniform and then to exploit H\"older regularity estimates to conclude the uniform convergence on $K_{\varepsilon, \delta}$.

  For any $\eta \in (0, 1)$, we set $K_{\varepsilon, \delta, \eta} \ldef K_{\varepsilon, \delta} \cap (\eta \mathbb{Z}^{d} \times \eta \mathbb{Z}^{d})$. Clearly, the set $K_{\varepsilon, \delta, \eta}$ is non-empty for any sufficiently small $\eta$. Furthermore, since the number of elements in $K_{\varepsilon, \delta, \eta}$ is finite, \emph{Step~1} implies that, for any $\epsilon_{1} > 0$, $\prob_{0}$-a.e.~$\omega$ and $\eta \in (0, 1) \cap \mathbb{Q}$, there exists $N_{4}(\omega, \epsilon_{1}, \eta) \in \mathbb{N}$ such that
  \begin{align} \label{eq:conv_X}
    \sup_{(x_{0}, y_{0}) \in K_{\varepsilon, \delta, \eta}}
    \bigl|
      n^{d-2} g_{n D}^{\omega}(\pi_{n}(x_{0}), \pi_{n}(y_{0}))
      -
      g_{D}^{\Sigma}(x_{0}, y_{0})
    \bigr|
    \;\leq\;
    \epsilon_{1},
    \qquad \forall\, n \geq N_{4}(\omega, \epsilon_{1}, \eta).
  \end{align}
  Furthermore, for any $(x, y) \in K_{\varepsilon, \delta} \setminus K_{\varepsilon, \delta, \eta}$, we denote by $(x_{0}, y_{0})$ a closest point to $(x,y)$ in $K_{\varepsilon, \delta, \eta}$. Then, 
  \begin{align*}
    |x - x_{0}|_{\infty}
    \;\leq\;
    \eta
    \qquad \text{and} \qquad
    |y - y_{0}|_{\infty}
    \;\leq\;
    \eta.
  \end{align*}
  Since $x, x_{0}, y, y_{0} \in [-r_{D}, r_{D}]^{d}$, by Corollary~\ref{cor:KrengelPyke}-(i), for $\prob$-a.e.\ $\omega$ and $n \geq N_{1}(\omega, \eta/2)$,
  \begin{align*}
    \sup_{z \in \{x, x_{0}, y, y_{0}\}} |\pi_{n}(z) - n z|_{2}
    \;\leq\;
    \frac{\eta n}{2}.
  \end{align*}
  Then, by the same arguments as in \eqref{eq:set:inclusion:1}, we get that $\pi_{n}(x) \in B^{\omega}(\pi_{n}(x_{0}), 2 C_{\mathrm{d}} n)$ and $\pi_{n}(y) \in B^{\omega}(\pi_{n}(y_{0}), 2 C_{\mathrm{d}} n)$. Thus, by applying again Proposition~\ref{prop:Holdercont}, we obtain for $\prob_{0}$-a.e.\ $\omega$, any $\eta \in (0, \varepsilon/(16 C_{\mathrm{d}})) \cap \mathbb{Q}$ and $n \geq N_{3}(\omega)$,
  \begin{align}
    \label{eq:uniform_hoelder:1}
    \bigl|
      g_{n D}^{\omega} (\pi_{n}(y), \pi_{n}(x))
      -
      g_{n D}^{\omega} (\pi_{n}(y_{0}), \pi_{n}(x))
    \bigr|
    \;\leq\;
    C_{5} \frac{(2 C_{\mathrm{d}} \eta)^{\varrho}}{\varepsilon^{d}} n^{2-d}, 
  \end{align}
  and
  \begin{align}
    \label{eq:uniform_hoelder:2}
    \bigl|
      g_{n D}^{\omega} (\pi_{n}(x), \pi_{n}(y_{0}))
      -
      g_{n D}^{\omega} (\pi_{n}(x_{0}), \pi_{n}(y_{0}))
    \bigr|
    \;\leq\;
    C_{5} \frac{(2 C_{\mathrm{d}} \eta)^{\varrho}}{\varepsilon^{d}} n^{2-d}.
  \end{align}
  By the symmetry of the $g_{n D}^{\omega}$,
  \begin{align*}
    &\bigl|
      n^{d-2} g_{n D}^{\omega}(\pi_{n}(x), \pi_{n}(y)) - g_{D}^{\Sigma}(x, y)
    \bigr|
    \\[.5ex]
    &\mspace{36mu}\leq\;
    \bigl|
      n^{d-2}
      \bigl(
        g_{n D}^{\omega}(\pi_{n}(y), \pi_{n}(x))
        -
        g_{n D}^{\omega}(\pi_{n}(y_{0}), \pi_{n}(x))
      \bigr)
    \bigr|
    \\[.5ex]
    &\mspace{72mu}+
    \bigl|
      n^{d-2}\
      \bigl(
        g_{n D}^{\omega}(\pi_{n}(x), \pi_{n}(y_{0}))
        -
        g_{n D}^{\omega}(\pi_{n}(x_{0}), \pi_{n}(y_{0}))
      \bigr)
    \bigr|
    \\[.5ex]
    &\mspace{72mu}+ 
    \bigl|
      n^{d-2} g_{n D}^{\omega}(\pi_{n}(x_{0}), \pi_{n}(y_{0})) - g_{D}^{\Sigma}(x_{0}, y_{0})
    \bigr| 
    +
    \bigl|
      g_{D}^{\Sigma}(x_{0}, y_{0}) - g_{D}^{\Sigma}(x, y)
    \bigr|.
  \end{align*}
  By \eqref{eq:conv_X} the third term on the right hand side is smaller than $\epsilon_{1}$. Furthermore, by \eqref{eq:uniform_hoelder:1} and \eqref{eq:uniform_hoelder:2} for $\eta$ small any enough the first and the second term are both smaller than $\epsilon_{1}$. The last term also is smaller than $\epsilon_{1}$ for $\eta$ sufficiently small by the uniform continuity of $g_{D}^{\Sigma}$. Thus, for $n$ large enough,
  \begin{align*}
    \sup_{(x,y) \in K_{\varepsilon, \delta}}
    \bigl|
      n^{d-2} g_{n D}^{\omega}(\pi_{n}(x), \pi_{n}(y)) - g_{D}^{\Sigma}(x, y)
    \bigr|
    \;\leq\;
    4 \epsilon_{1},
  \end{align*}
  which is the desired conclusion.
\end{proof}

\subsection*{Acknowledgement.} We are grateful to Lisa Hartung for valuable discussions that initiated this work. We also thank the Weierstrass Institute for Applied Analysis and Stochastics in Berlin for their hospitality on numerous occasions.
%
%
%
%


%
%
%
%

\end{document}